\documentclass[11pt]{article}

\usepackage[a4paper,scale=1.0]{geometry}
\usepackage{amsmath}
\usepackage{amssymb}
\usepackage{amsthm}
\usepackage{bm} 
\usepackage{booktabs}
\usepackage{color}
\usepackage{commath}
\usepackage{doi}
\usepackage[numbers,sort&compress]{natbib}
\usepackage{enumitem}
\usepackage{float}
\usepackage{fullpage}
\usepackage{graphicx}
\usepackage{hyperref}
\usepackage{subcaption}
\usepackage{thmtools}
\usepackage{tocloft}
\usepackage{ulem}
\usepackage{url}

\usepackage{array}
\usepackage{ragged2e}
\usepackage[font=small,labelfont=bf]{caption}
\setenumerate{font=\itshape}

\setlength{\bibsep}{3pt plus 0.3ex}


\newcommand{\UP}[3][]{\makebox[0pt]{\smash{\raisebox{1.4em}{$\phantom{#3}$#1\rotatebox{45}{\footnotesize $#2$}}}}{#3}}
\newcommand{\UPn}[3][]{\makebox[0pt]{\smash{\raisebox{1.9em}{$\phantom{#3}$#1\rotatebox{45}{\footnotesize $#2$}}}}{#3}}
\newcommand{\RG}[2][]{\makebox[0pt]{\hspace{3em}#1\rotatebox{15}{\footnotesize $#2$}}}

\newcommand{\HRule}[1][\medskipamount]{\par
  \vspace*{\dimexpr-\parskip-\baselineskip+#1}
  \noindent\rule{\linewidth}{0.2mm}
  \vspace*{\dimexpr-\parskip-.5\baselineskip+#1}
}

\hypersetup{colorlinks,linktocpage,citecolor=red,filecolor=red,linkcolor=red,urlcolor=red}

\newcommand{\es}{\varnothing}
\newcommand{\mI}{\mathcal{I}}
\newcommand{\mJ}{\mathcal{J}}
\newcommand{\mA}{\mathcal{A}}

\newcommand{\PNn}{P_{N,n}}
\newcommand{\HNn}{H_{N,n}}

\newcommand{\conv}{\mathop{\mathrm{conv}}}


\newcommand{\RR}{{\mathbb{R}}}

\definecolor{DAcolor}{rgb}{0.1,0.1,1}

\definecolor{SHCcolor}{rgb}{0.9,0.1,0.1}



\newcommand{\ignore}[1]{}

\DeclareMathOperator{\area}{area}

\DeclareMathOperator{\ILP}{ILP}

\theoremstyle{plain}
\newtheorem{theorem}{Theorem}
\newtheorem{corollary}{Corollary}
\newtheorem{lemma}{Lemma}
\newtheorem{proposition}{Proposition}

\theoremstyle{definition}
\newtheorem{definition}{Definition}

\theoremstyle{remark}

\newtheorem{problem}{Problem}

\newcounter{innerlist}

\let\cite\citep

\newlength{\normalparindent}
\setlength{\normalparindent}{\parindent}
\newlength{\normalparskip}
\setlength{\normalparskip}{\parskip}

\setlength\cftparskip{.03cm}
\setlength\cftbeforesecskip{8pt}
\setlength\cftaftertoctitleskip{8pt}

\numberwithin{equation}{section}

\newcommand{\mi}[1]{q_{#1}}
\newcommand{\comp}{\complement}
\newcommand{\sminus}{\setminus}
\newcommand{\extn}{[n;N]}

\newcommand{\progname}{\textsf}

\newcommand{\mait}{\progname{mai20}}
\newcommand{\lrslib}{\progname{lrslib}}

\newcommand{\mplrs}{\progname{mplrs}}
\newcommand{\normaliz}{\progname{Normaliz}}
\newcommand{\cplex}{\progname{CPLEX}}
\newcommand{\glpsol}{\progname{glpsol}}
\newcommand{\gurobi}{\progname{Gurobi}}

\captionsetup[subfigure]{labelfont=normalfont,labelformat=simple, 
aboveskip=6pt,belowskip=-7pt}

\newcommand{\tool}[1]{\vspace{.25cm}\phantomsection\centerline{\large \textbf{#1}}\vspace{.3cm}}

\RequirePackage{fix-cm}

\begin{document}

\title{
On the foundations and extremal structure \\ of the holographic entropy cone 
\thanks{DA was supported by JSPS Kakenhi Grants
16H02785, 
18H05291
and
20H00579.\\
\indent\;\: SHC was supported by NSF Grant PHY-1801805
and the University of California, Santa Barbara.
}
}

\author{David Avis \\ School of Informatics, Kyoto University, Kyoto, Japan and
   \\ School of Computer Science,
    McGill University, Montr{\'e}al, Qu{\'e}bec, Canada
\and Sergio Hern{\'a}ndez-Cuenca \\
Center for Theoretical Physics, Massachusetts Institute of Technology, Cambridge, MA, USA}

\providecommand{\keywords}[1]{\noindent\textbf{\textit{Keywords:\,}} #1}

\maketitle

\begin{abstract}
The holographic entropy cone (HEC) is a polyhedral cone first introduced in the
study of a class of quantum entropy inequalities.
It admits a graph-theoretic description in terms of
minimum cuts in weighted graphs, a characterization which naturally
generalizes the cut function for complete graphs. Unfortunately,
no complete facet or extreme-ray representation of
the HEC is known. In this work, starting from a purely graph-theoretic perspective,
we develop a theoretical and computational foundation for the HEC.
The paper is self-contained, giving new proofs of known results and proving several
new results as well. 
These are also used to develop two systematic 
approaches for finding the facets and extreme rays of the HEC, which we illustrate
by recomputing the HEC on $5$ terminals and improving its graph description. 
We also report on some partial results for 6 terminals. 
Some interesting open problems are stated throughout.
\vspace{4pt}

\begin{keywords}
holographic entropy cone, polyhedral computation, cut functions, 
extreme rays, facets, entropy inequalities, quantum information
\end{keywords}
\end{abstract}

\tableofcontents

\section{Introduction}
\label{intro}

The holographic entropy cone (HEC) has its origins in quantum physics 
in the work of Bao et al. \cite{Bao:2015bfa} as described briefly in 
Appendix \ref{physics}. The HEC is a family of polyhedral cones $H_n, n \ge  1.$
A crucial result of their paper 
is a graph-theoretic characterization
in terms of minimum cuts in a complete graph, which is a natural
generalization of the well-studied cone of cut functions. 
This allows us to study the HEC without any reference to the underlying quantum physics setting. Apart from its relationship to cut functions, the HEC does not appear to be
related to other known polyhedral objects.
Our main focus is on the extremal structure of the HEC.
At present no compact representation of either the extreme rays or the facets 
of $H_n$ is known and a complete explicit description is only
known up to $n=5$, see \cite{Bao:2015bfa,Cuenca:2019uzx}.
The main motivation for studying the extremal structure of the HEC is the characterization of its facets, 
which physically correspond to entropy inequalities that strongly constrain the entanglement patterns of 
quantum states encoding higher-dimensional spacetimes as their quantum gravity duals in 
holography \cite{Bao:2015bfa,Chen:2021lnq} -- see Appendix \ref{physics} for more details

This paper is structured as follows. Firstly, 
in Section \ref{defs} we give a formal definition of $H_n$ and some
basic structural results that will be needed throughout the paper.
These include new proofs that it is full-dimensional and 
polyhedral. In proving the latter result, using antichains in a lattice, 
we obtain a tighter bound on
the size of the complete graph needed to realize all extreme rays of $H_n$.
We then review some basic results on the $H$- and $V$-representations
of cones and study $H_2$ relating it to the cone of cut functions.
In Section \ref{rays} we discuss the extreme rays of $H_n$ and
describe some related cones that lead to methods to compute them. 
This gives a simple proof that the $H_n$ is a rational cone.
It also allows us to give a description of $H_3$.
Following that we give a general zero-lifting result for extreme rays.
In Section
\ref{facets} we describe valid inequalities and facets.
We begin by reviewing the proof-by-contraction method
that is used for proving validity of inequalities. In the proof we again use 
antichains, obtaining a reduction in the complexity of the original method. 
This is followed by a discussion of zero-lifting of valid inequalities and facets.
In Section \ref{ilp} we describe integer programs that can be used to test
membership in $H_n$ and prove non-validity of inequalities defined over it.
Many of the results of the paper are combined in Section \ref{H5}, which describes 
two methods to derive complete facet and extreme-ray descriptions of $H_n$
and illustrate these on computations of $H_5$.
There are a lot of interesting open problems related to the HEC, and some of these
are stated throughout the paper and in the conclusion.
Supplemental material, including
input and output files, integer linear programs and
C code for various functions mentioned,
is available online\footnote{\url{http://cgm.cs.mcgill.ca/~avis/doc/HEC/HEC.html}}.

\section{Definitions and basic results}
\label{defs}
For any positive integers $k$ and $N$, let $[k]=\{1,2,\dots,k \}$, and let $K_N$ 
denote the undirected complete graph on the vertex
set $[N]$. The edge set $E_N$ consists of all edges 
$e=(i,j)$ between vertices $i,j\in [N]$ for every pair $1 \le i < j \le N$. 
A weight map $w : E_N \to \RR_{\ge 0}$ is introduced to assign a nonnegative 
weight $w(e)$ to every $e\in E_N$. Any subset $W \subseteq [N]$ defines a
\textit{cut} $C(W)$ as the set of all edges $(i,j)$ with $i \in W$ and $j
\notin W$.
Since both $W$ and its complement define the same cut, we will normally consider
cuts where $W \subseteq [N-1]$, and generally exclude
the empty cut.
We denote by $S_W=\norm{C(W)}$ the total weight of the cut $C(W)$, which is
the sum of the weights of all the edges in $C(W)$. Letting $n=N-1$, consider 
the \textit{$S$-vector} of length ${2^n-1}$ with entries indexed by cardinality 
and then lexicographically by the non-empty subsets of $[n]$, 
\begin{equation}
\label{Svector} 
    S=(S_{1},S_{2},\dots,S_{n},S_{12},\dots,S_{12\dots n}),
\end{equation} 
where juxtaposition is a shorthand for the corresponding set of
integers. The convex hull of the set of all $S$ vectors for a given $N$ forms a
cone in $\RR^{{2^n-1}}$. In fact, this cone is polyhedral, 
its facets are the
subadditivity inequalities and the submodular inequalities are valid for it,
as established independently by Tomizawa and
Fujishige (see Section 3.6 of \cite{Fu91}) and Cunningham \cite{Cu85}. 
When the vector $S$ is
expressed as a function of $W$ it is known as the \textit{cut function}.

The HEC is a generalization of the cone defined by the cut function. We follow
\cite{Bao:2015bfa}, but adapt its notation and terminology 
considerably. Instead of setting $n=N-1$, we fix some integer 
$n\ge 2$ and consider $K_N$ for all $N>n$. In any such graph, we call the 
vertices $[n]$ \textit{terminals} (cf. boundary regions in holography).
The vertex $N$ is called the \textit{purifying vertex} in the physics literature,
but we will simply call it the \textit{sink} here.
Oftentimes, these will be combined into $\extn = [n]\cup\{N\}$ and collectively 
referred to as \textit{extended terminals}.
The other vertices, if any, are called \textit{bulk} vertices (cf.
the bulk spacetime).

Let $I$ be a non-empty subset of terminals, i.e. $\es \ne I \subseteq [n]$. We
extend the definition of $S$ above to this new setting.
For any $N > n$ and weight map $w$ defined on $K_N$, we introduce a construct which
captures all the basic properties conveyed by the RT formula in \eqref{eq:RT}.
In particular, let
\begin{equation}
\label{Sdef} 
    S_I = \min_{I=W \cap [n]} \; \norm{C(W)},
\end{equation} 
where the minimization is over all $W\subseteq [N-1]$. This says
that $S_I$ takes the minimum weight over all cuts which contain precisely
the terminals in $I$ and some (possibly empty) subset of the bulk vertices.
Note that when $n=N-1$ we are minimizing over the single
subset $W=I$ and the definition is equivalent to the one given earlier. In
graph theory terms, $S_I$ is just the capacity of the minimum-weight cut 
or \textit{min-cut} in $K_N$ separating $I$ from $\extn\sminus I$.
By the duality of cuts and flows, an equivalent definition is
to let $S_I$ be the value of the maximum flow between multiple-sources
$I$ and multiple-sinks $\extn\sminus I$ in $K_N$.
The max flow problems are structurally different for each $I$ but nevertheless
give an efficient method of computing the $S_I$. 

We form an $S$-vector from \eqref{Sdef} 
of the form of \eqref{Svector} as we did previously,
and say that $w$ \textit{realizes} $S$ in $K_N$
or, more compactly, that $(S,w)$ is a \textit{valid pair}.
\begin{definition}\label{def:hec}
    The \textit{holographic entropy cone} on $n$ terminals is defined as
    \begin{equation}\label{eq:hecdef}
        H_n = \{S \in \RR^{2^n-1} \;:\; 
    \textit{$(S,w)$ is a valid pair for some $N$ and $w$} \}.
    \end{equation}
\end{definition}
Examples of the facet defining inequalities and extreme rays of $H_n$ for small $n$
are given in Appendices \ref{app:facets} and \ref{app:rays} respectively.

It follows from \eqref{Sdef} that for any $\lambda > 0$, 
$(S,w)$ is a valid pair for $H_n$
if and only if $(\lambda S, \lambda w)$ is, so $H_n$ is a cone.
In fact it is full-dimensional. 
The proof employs $S$-vectors arising from $K_{n+2}$ with
all edges of zero weight except possibly edges $(i,n+1)$ for $i\in\extn$.
We call these \textit{star graphs} and exhibit a family of $2^n -1$ of
them giving linearly independent $S$-vectors.

\begin{proposition}\label{prop:full}
    $H_n$ is a cone of dimension $2^n-1$.
\end{proposition}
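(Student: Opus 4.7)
The plan is to produce $2^n - 1$ linearly independent valid $S$-vectors, indexed by the non-empty subsets $I \subseteq [n]$, each realized by a single star graph in $K_{n+2}$ with bulk vertex $n+1$ and sink $N = n+2$. For each such $I$, I would assign weight $1$ to the edge $(i, n+1)$ for $i \in I$, weight $0$ to $(i, n+1)$ for $i \in [n] \setminus I$, and a small common weight $\epsilon \in (0, 1)$ to the edge $(N, n+1)$; all remaining edges have weight $0$.

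Since the nonzero-weight edges all meet the bulk vertex $n+1$, in the definition \eqref{Sdef} of $S^{(I)}_J$ only the two choices $W = J$ and $W = J \cup \{n+1\}$ need be considered, contributing cut weights $|I \cap J|$ and $|I \setminus J| + \epsilon$ respectively. Hence
\[
S^{(I)}_J \;=\; \min\bigl(|I \cap J|,\; |I \setminus J| + \epsilon\bigr) \;=\; M^{(0)}_{I,J} + \epsilon\, M^{(1)}_{I,J},
\]
where $M^{(0)}_{I,J} = \min(|I \cap J|, |I \setminus J|)$ and $M^{(1)}_{I,J}$ is the $0/1$ indicator of $2|I \cap J| > |I|$. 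In particular $M^{(1)}_{I,I} = 1$ for every non-empty $I$.

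It then remains to show that the $2^n - 1$ vectors $\{S^{(I)}\}$ are linearly independent, i.e. that the matrix $M(\epsilon) = M^{(0)} + \epsilon M^{(1)}$ is nonsingular for some admissible $\epsilon$. Since $\det M(\epsilon)$ is a polynomial in $\epsilon$ whose top-degree coefficient equals $\det M^{(1)}$, it suffices to show that $M^{(1)}$ is nonsingular. I would establish this by ordering rows and columns by cardinality: the $(|I|=k,\,|J|=l)$-block of $M^{(1)}$ vanishes whenever $2l \le k$, giving a partial block structure, and the diagonal blocks $(|I|=|J|=k)$ are Johnson-scheme $0/1$ matrices indicating $|I \cap J| > k/2$, whose invertibility can be checked by a direct eigenvalue computation.

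The main obstacle is the combinatorial verification of nonsingularity of the diagonal blocks, because $M^{(0)}$ alone is very degenerate (its singleton rows vanish, and its entries depend only on $|I|$ and $|I \cap J|$), so the entire linear independence is carried by the perturbation $\epsilon M^{(1)}$. A cleaner self-contained alternative would be to allow $I$-dependent perturbation weights $a_N = \epsilon_I$ chosen with well-separated magnitudes (e.g.\ $\epsilon_I \ll \epsilon_{I'}$ whenever $|I| < |I'|$), so that $M$ becomes upper triangular in a suitable subset ordering and invertibility reduces to the non-vanishing of the diagonal entries $S^{(I)}_I = \epsilon_I$.
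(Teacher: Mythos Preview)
Your star-graph construction and the formula $S^{(I)}_J = \min(|I\cap J|,\,|I\setminus J| + \epsilon)$ are correct, and reducing the problem to $\det M^{(1)} \ne 0$ is a sound strategy. The gap is in the argument you give for that determinant. The vanishing condition $2l \le k$ on the $(|I|=k,\,|J|=l)$ block still leaves nonzero blocks strictly below the diagonal once $k\ge 3$: for instance the $(k,l)=(3,2)$ block has entry $1$ whenever $J\subset I$. So $M^{(1)}$ is not block-triangular in the cardinality ordering, and invertibility of the diagonal blocks would not suffice even if it held. Worse, the diagonal blocks themselves can be singular: for $n=4$ and $k=3$, any two distinct $3$-subsets of $[4]$ meet in exactly two elements, so the $(k=3)$ diagonal block is the all-ones $4\times 4$ matrix, of rank one. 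Your fallback with $I$-dependent $\epsilon_I$ does not yield an upper-triangular matrix either, since $S^{(I)}_J = 0$ forces $I\cap J = \emptyset$, which is not compatible with any total order on the nonempty subsets.

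For comparison, the paper also uses star graphs in $K_{n+2}$, but with the specific integer sink weight $w_N = \max(1,\,|J|-1)$ rather than a uniform perturbation $\epsilon$. This gives $S^J_I = |I\cap J| - [\,|J|\ge 2 \text{ and } I\supseteq J\,]$, and the resulting matrix is shown to have determinant $(-1)^{n+1}$ by an explicit induction on $n$, exploiting a recursive block decomposition of the $(n{+}1)$-matrix in terms of the $n$-matrix. If you wish to rescue the perturbative route you would need a genuinely different proof that $M^{(1)}$ is nonsingular; it does appear to be so for small $n$, but not for the reason you sketch.
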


\begin{proof}
For each $\es\ne J \subseteq [n]$, define a weighted star graph where the nonzero 
edge weights are\footnote{This class of star graphs were inspired by a construction 
of \cite{Hubeny:2018ijt}.}
\begin{equation}
\label{eq:star}
    w_i= 1,\quad\forall~i \in J\qquad\text{and}\qquad w_N=
\begin{cases}
    1 \qquad &\text{if \;$\abs{J} = 1$},\\
    \abs{J}-1 \qquad &\text{otherwise}.
\end{cases}
\end{equation}
For every $n \ge 2$, their respective $S$-vectors $S^J$ can be easily seen to be given by
\begin{equation}
\label{eq:li}
    S_I^J= \abs{I \cap J} -\delta (I,J), \qquad \delta(I,J) = 
\begin{cases}
    1 \qquad &\text{if \;$\abs{J} \ge 2$ and $I \supseteq J$},\\
    0 \qquad &\text{otherwise}.
\end{cases}
\end{equation}
Using them as row vectors, we construct square matrices $A^n$. 
For example,
\vspace{.5cm}\begin{equation}
\begin{aligned}
    A^2 &=
    \arraycolsep=0.34em\renewcommand{\arraystretch}{1.1}
    \left[
    \small
    \begin{array}{c|c|c}
        \UP{1}{1}&\UP{12}{1}&\UP{2}{0}\RG{1}\\\hline
        1&1&1\RG[~]{12}\\\hline
        0&1&1\RG{2}
    \end{array}
    \right]\quad
    \\~\\
    A^3 &= 
    \arraycolsep=0.34em\renewcommand{\arraystretch}{1.1}
    \left[
    \small
    \begin{array}{ccc|ccc|c}
        \UP{1}{1}&\UP{2}{0}&\UP{12}{1}&\UP{13}{1}&\UP{23}{0}&\UP{123}{1}&\UP{3}{0}\RG{1}\\
        0&1&1&0&1&1&0\RG{2}\\
        1&1&1&1&1&1&0\RG[~]{12}\\\hline
        \textcolor{red}{1}&0&1&\textcolor{red}{1}&1&1&1\RG[~]{13}\\
        \textcolor{blue}{0}&\textcolor{red}{1}&1&\textcolor{blue}{1}&\textcolor{red}{1}&1&1\RG[~]{23}\\
        \textcolor{blue}{1}&\textcolor{blue}{1}&\textcolor{red}{2}&\textcolor{blue}{2}&
        \textcolor{blue}{2}&\textcolor{red}{2}&1\RG[~~]{123}\\\hline
        0&0&0&1&1&1&1\RG{3}\\
    \end{array}
    \right]\quad
\end{aligned}
\quad\cdots\quad
\arraycolsep=0.4em\renewcommand{\arraystretch}{1.25}
A^{n+1} = 
    \left[
        \begin{array}{ccc}
        \UPn[]{I}{B^n\vspace*{10cm}}&\UPn[\quad~]{I\cup\{n+1\}}{C^n}&
        \UPn[\quad]{\{n+1\}}{c}\RG[~~]{J}\\
        D^n&E^n&\vdots\RG[\qquad\qquad~~]{J\cup\{n+1\}}\\
        d&\dots&\cdot\RG[\qquad~~~~]{\{n+1\}}
        \end{array}
    \right]\qquad\qquad\quad
\label{eq:A3}
\end{equation}
where the general sketch partitions $A^{n+1}$ into four square matrices $B^n$, $C^n$, $D^n$ 
and $E^n$, of size $2^n-1$, a final column $c$, and a final row $d$.
Note that
the rows and columns have been permuted from their
usual ordering for subsets of $[n+1]$. Here, labels $\es\ne I\subseteq[n]$ go first, 
then those of the form $I\cup\{n+1\}$, and $\{n+1\}$ last (cf. the block forms in \eqref{eq:A3}).
We prove by induction on $n$ that $\det(A^{n})=(-1)^{n+1}$.
This is immediate for $n=2$.
Matrix $B^n$ in $A^{n+1}$ is just $A^n$ reordered as described above.
Since we perform the same reordering for rows as for columns the determinant sign is unchanged,
so by the induction hypothesis $\det(B^n)=(-1)^{n+1}$.
As the rows of $B^n$ and $C^n$ are indexed by $J\not\ni n+1$, we have $C^n=B^n$.
Additionally, one easily verifies that in $c$ the first $2^n-1$ entries are $0$ and the 
rest are $1$. Row $d$ has the same pattern.

We now make a comparison between entries in column 
$I\not\ni n+1$ of $D^n$ and column $I \cup \{n+1 \}$ of $E^n$. 
Consider the diagonal elements of each. For row $J$, in $D^n$ we have column 
$I=J \setminus \{n+1\}$ and so $S_I^J= \abs{I} = \abs{J}-1$. In $E^n$ the column 
label is also $J$ and since $\abs{J} \ge 2$ we have
$\delta(I,J)=1$ and so $S_I^J= \abs{J}-1$. Hence the diagonals are identical.
Now consider the elements below them. For $D^n$ each row index $J$ contains 
$n+1$ but none of its column indices do, so $S_I^J= \abs{I \cap J}$. In $E^n$ the
same applies but the intersection now includes $n+1$, so the corresponding entry 
is always bigger by one. These facts are illustrated by the coloured entries in $A^3$.

We now subtract the first $2^n-1$ columns of $A^{n+1}$ from the next $2^n-1$ columns, 
then subtract $c$ from each of these columns also, obtaining
\begin{equation}
\arraycolsep=0.34em\renewcommand{\arraystretch}{1.1}
\label{eq:matrices}
    A^{n+1} = 
    \left[
        \begin{array}{ccc}
        B^n&B^n&\bf{0}\\
        D^n&E^n&\bf{1}\\
        \bf{0}&\bf{1}&1
        \end{array}
    \right]
    \qquad\longrightarrow\qquad
    \tilde{A}^{n+1} = 
    \left[
        \begin{array}{ccc}
        B^n&\bf{0}&\bf{0}\\
        D^n&E^n-D^n-\bf{1}&\bf{1}\\
        \bf{0}&\bf{0}&1
        \end{array}
    \right].
\end{equation}
Here $\bf{0}$ and $\bf{1}$ respectively denote all-$0$ or all-$1$ matrices of suitable size.
In the resulting $\tilde{A}^{n+1}$, notice that $E^n-D^n-\bf{1}$ is an upper triangular 
matrix with all diagonal elements equal to $-1$.
Recalling that $\det(B^n)=(-1)^{n+1}$, we have 
$\det(A^{n+1})=\det(\tilde{A}^{n+1})=(-1)^{n+2}$, as desired.
\end{proof}

We will show in the following sections that $H_n$ is also convex, polyhedral and rational.
One important basic property the $S$-vectors do not possess is monotonicity, as can be seen
by examples in Appendix \ref{app:rays}.
 
\subsection{Polyhedrality of the HEC}\label{ssec:poly}

The polyhedrality of the HEC was established by Bao et al. \cite{Bao:2015bfa}.
We give a proof of this crucial result here following similar lines to the original
proof but obtain a tighter result due to our use of antichains.
In general, there may be more than one min-cut $W$ for each $\es\ne I\subseteq [n]$ 
achieving the minimum in \eqref{Sdef}. Among these, let $W_I$ denote one which is minimal under set inclusion.
We call $W_I$ a \textit{minimal min-cut} for $I$ and have $S_I = \norm{C(W_I)}$.
The following basic theorem shows that these are unique and builds on results from
Lemma $6$ of \cite{Nezami:2016zni}, and Theorems $3.1$ and $3.2$ of \cite{Bao:2020mqq}.
\begin{theorem}
\label{basic}
For positive integers $n<N$,
consider a weighted complete graph $K_N$ with terminal set $[n]$.
Let $W_I$ and $W_J$ be minimal min-cuts for $\es\ne I, J \subseteq [n]$.
Then:
\begin{enumerate}[label=(\alph*),leftmargin=2\parindent]
    \item \label{thm:unique} Each $I \subseteq [n]$ has a unique minimal min-cut $W_I$.
    \item \label{thm:nest} \qquad~~~~~~$I \subseteq J \quad \iff \quad W_I \subseteq W_J$.
    \item \label{thm:dist} \qquad\;$ I  \cap J = \es \quad \iff \quad W_I \cap W_J = \es $.
    \item \label{thm:bound} If $ m=\abs{\cup_{I \subseteq [n]} W_I}$, then all minimal
    min-cuts can be represented in a weighted $K_{m+1}$.
\end{enumerate}
\end{theorem}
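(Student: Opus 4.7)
The plan is to leverage the submodularity of the cut function $W \mapsto \norm{C(W)}$ on subsets of $[N-1]$, together with its posimodular counterpart $\norm{C(A)} + \norm{C(B)} \ge \norm{C(A \sminus B)} + \norm{C(B \sminus A)}$, which follows by applying submodularity to $A$ and the complement of $B$ in $[N]$ and using the symmetry $C(X) = C([N]\sminus X)$. The key observation is that the projection $W \mapsto W \cap [n]$ commutes with $\cap$, $\cup$, and $\sminus$, so each lattice operation on cuts moves cleanly between the minimization problems for different $I$.

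For \ref{thm:unique}, I would take two min-cuts $W$ and $W'$ for the same $I$. Both $W \cap W'$ and $W \cup W'$ intersect $[n]$ in exactly $I$, so both are feasible cuts for $I$, and submodularity forces
\begin{equation*}
S_I + S_I \;=\; \norm{C(W)} + \norm{C(W')} \;\ge\; \norm{C(W \cap W')} + \norm{C(W \cup W')} \;\ge\; S_I + S_I,
\end{equation*}
making $W \cap W'$ itself a min-cut for $I$. Intersecting over all min-cuts for $I$ then yields the desired unique minimal element $W_I$.

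Analogous two-set comparisons handle \ref{thm:nest} and \ref{thm:dist}. For \ref{thm:nest}, the reverse implication is immediate from $I = W_I \cap [n]$; for the forward one, when $I \subseteq J$ we have $(W_I \cap W_J) \cap [n] = I$ and $(W_I \cup W_J) \cap [n] = J$, so the submodular chain pinches, making $W_I \cap W_J$ a min-cut for $I$, and uniqueness from \ref{thm:unique} gives $W_I \subseteq W_I \cap W_J \subseteq W_J$. For \ref{thm:dist}, when $I \cap J = \es$ posimodularity applied to $W_I$ and $W_J$ makes $W_I \sminus W_J$ a min-cut for $I \sminus J = I$ and $W_J \sminus W_I$ a min-cut for $J$, so uniqueness forces $W_I \subseteq W_I \sminus W_J$, i.e., $W_I \cap W_J = \es$.

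The main obstacle will be \ref{thm:bound}, since it requires a global contraction argument rather than a two-cut comparison. My plan is to set $U = \bigcup_I W_I$, note that $[n] \subseteq U$ and $N \notin U$, and construct a weighted $K_{m+1}$ on vertex set $U \cup \{v\}$ by contracting every vertex outside $U$ into a single new sink $v$, summing the weights of parallel edges that thereby collapse. For each $I$, any $W' \subseteq U$ with $W' \cap [n] = I$ gives the same cut weight in the new graph as in $K_N$, so the new min-cut value is exactly $S_I$: any cheaper new cut would lift back to a strictly cheaper $[N-1]$-cut of $I$ in the original, a contradiction. The unique minimal min-cut of $I$ in the new $K_{m+1}$ must then coincide with $W_I$, since any proper subset of $W_I$ achieving $S_I$ in the new graph would also achieve $S_I$ in $K_N$, contradicting the minimality guaranteed by \ref{thm:unique}.
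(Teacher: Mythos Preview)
Your proof is correct. Parts \ref{thm:unique}, \ref{thm:nest}, and \ref{thm:bound} follow essentially the same route as the paper: submodularity of the cut function combined with the minimality of $W_I$ for \ref{thm:unique} and \ref{thm:nest}, and contraction of the vertices outside $\bigcup_I W_I$ into a single sink for \ref{thm:bound}.

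The genuine difference is in \ref{thm:dist}. The paper argues by contradiction with an elementary local counting: take a vertex $x \in W_I \cap W_J$, let $a,b,c$ be the total weights from $x$ into $W_I \sminus W_J$, $W_J \sminus W_I$, and $[N]\sminus(W_I \cup W_J)$ respectively, and observe that minimality of $W_I$ forces $a > b+c$ while minimality of $W_J$ forces $b > a+c$, a contradiction. Your route via posimodularity is cleaner and more uniform with the rest of the proof---it reuses the same submodular/lattice machinery as \ref{thm:unique} and \ref{thm:nest}, and it makes explicit that the unique minimal min-cut is contained in \emph{every} min-cut for $I$, which is what drives the conclusion. The paper's argument, on the other hand, is self-contained and avoids the detour through complements and the symmetry $\norm{C(X)} = \norm{C([N]\sminus X)}$ needed to derive posimodularity.
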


\begin{proof}~
\begin{enumerate}[label=(\alph*),leftmargin=2\parindent]
    \item 
    Suppose $W$ and $W'$ are minimal min-cuts for $I$. 
    Submodularity of the cut function gives
    \begin{equation}\label{eq:ssa}
        \norm{C(W)} + \norm{C(W')} \ge \norm{C(W \cup W')} + \norm{C(W \cap W')}.
    \end{equation}
    Clearly, $W \cup W'$ and $W \cap W'$ are cuts for $I$. Since $W$ and $W'$ are 
    additionally min-cuts,
    \begin{equation}
        \norm{C(W \cup W')} \ge \norm{C(W)},
        \qquad \norm{C(W \cap W')} \ge \norm{C(W')},
    \end{equation}
    thereby turning all inequalities above into equations.
    Hence $W \cap W'$ is a min-cut and, as an intersection of minimal ones, minimal as well.
    It must thus be the case that $W = W' = W_I$.
    
    \item 
    First assume that $W_I \subseteq W_J$.
    Since $W_I$ and $W_J$ are cuts for $I$ and $J$ respectively, 
    we have $W_I \cap [n] = I$ and $W_J \cap [n] = J$.
    As $W_I \subseteq W_J$, we have $W_I \cap [n] \subseteq W_J \cap [n]$.
    Hence $I \subseteq J$.

    Now assume that $I \subseteq J$. Again, as min-cuts, 
    $W_I \cap [n] = I$ and $W_J \cap [n]=J$, and therefore 
    $(W_I \cap W_J) \cap [n] = I$ and $(W_I \cup W_J) \cap [n] = J$. This 
    means $W_I \cap W_J$ and $W_I \cup W_J$ are respectively cuts for $I$ 
    and $J$. Then submodularity and minimality, applied to 
    $W=W_J$ and $W'=W_I$ as in the proof of \ref{thm:unique} above, imply 
    $W_I \cap W_J = W_I$, which proves the claim.
    
    \item 
    By the definitions, $W_I \cap W_J = \es $ implies that $ I \cap J = \es$.

    For the converse, suppose that $ I \cap J = \es$ and 
    that there exists a vertex $x \in W_I \cap W_J $. Let $a, b$ and $c$ be the total weight of 
    edges from $x$ to, respectively, 
    $W_I \sminus W_J$, $W_J \sminus W_I$ and $[N] \sminus 
    (W_I \cup W_J)$. 
    Since $W_I$ is a min-cut, $a>b+c$, for otherwise we could remove
    $x$ from $W_I$ without increasing the weight of the cut.
    Similarly, by considering $W_J$, we have $b>a+c$.
    As edge weights are nonnegative, this gives the desired contradiction.

    \item 
    Firstly, we renumber the vertices $n+1,\dots,N$ in $K_N$ so that vertices 
    $[m]$ cover all of the vertices in the union of the $W_I$. 
    In $K_{m+1}$ we will let $m+1$ take the role of the sink $N$ and adjust 
    weights as follows. We leave the edge weights unchanged between edges with 
    both endpoints in $[m]$. For $i \le m$ we give edge $(i,m+1)$ the weight 
    corresponding to the sum of the weights of all edges $(i,j)$ with 
    $j=m+1,\dots,N$. It is easy to verify that the weights of the min-cuts 
    $W_I$ are preserved: if a smaller weight cut for a terminal set $I$
    existed in $K_{m+1}$, then it could be reproduced in the original $K_N$, 
    a contradiction.
\end{enumerate}
\vspace{-10pt}
\end{proof}

Unfortunately, part \ref{thm:nest} above does not generalize to the intersection 
of three or more sets. A simple example is given by the $K_{5}$ star graph with
unit weights for the $3$ terminal edges and zero for the sink edge. 
In particular, the intersection of the three pairs of terminals is of course empty,
but the intersection of their minimal min-cuts is not as it contains the bulk vertex.

Each $S$-vector is realized in $K_N$ for some $N$, and we are interested
in the smallest such $N$. More generally, for a given $n$, is there a smallest
integer $m(n)$ such that all $S$-vectors on $[n]$ can be realized in $K_{m(n)}$?
The answer is yes and this was proved by Bao et al. \cite{Bao:2015bfa} (Lemma $6$) 
who obtained $m(n) \le 2^{2^n-1}$. A tighter bound can be obtained from 
Theorem \ref{basic} as follows.

Let $Bool_n$ denote the \textit{Boolean lattice} of all subsets of
$[n]$ ordered under inclusion.
A family of subsets of $[n]$, $\mI\subseteq Bool_n$, is an
\textit{upper set} if for each $I \in \mI$ and $J \subseteq [n]$ that contains
$I$ we have $J \in \mI$.
For $\mJ\subseteq Bool_n$,
we call $\mJ$ \textit{pairwise intersecting} if each pair of its constituent
subsets has a non-empty intersection. If $\mJ$ is the empty set or
consists of a singleton, we consider $\mJ$ to be pairwise intersecting.
An \textit{antichain} $\mA \subseteq Bool_n$ is a collection
of subsets of $[n]$ which are pairwise incomparable, i.e. none of them is
contained in any of the others. Notice that the minimal elements of any upper set
form an antichain $\mA$ and that $\mA$ is pairwise intersecting if and
only if its upper set is.
Let $M(n)$ denote the number of pairwise interesecting antichains
$\mA$ in $Bool_n$.
We can use this value to bound $m(n)$ as follows:

\begin{corollary}
\label{cor:bound}
For $n \ge 2$, every $S$-vector for $n$ terminals can be realized in $K_{m(n)}$, where 
\begin{equation}
    m(n) \le M(n).
\end{equation}
\end{corollary}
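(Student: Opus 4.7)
The plan is to combine Theorem \ref{basic}(d) with Theorem \ref{basic}(b) and \ref{basic}(c) by associating each vertex of a realizing graph to a pairwise intersecting antichain of $Bool_n$. Starting from any weighted $K_N$ that realizes a given $S$-vector, I would first reduce to a $K_{m+1}$ with $m = \abs{\cup_{I\subseteq[n]}W_I}$ as guaranteed by part \ref{thm:bound}, and then bound $m+1$ by counting how many ``types'' of vertices are really needed.

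The main construction: for every vertex $v$ of the reduced graph, define
\begin{equation*}
    \mI_v = \{\es\ne I\subseteq [n] \;:\; v\in W_I\}.
\end{equation*}
By Theorem \ref{basic}\ref{thm:nest}, $\mI_v$ is an upper set of $Bool_n$: if $v\in W_I$ and $I\subseteq J$, then $W_I\subseteq W_J$ forces $v\in W_J$. By Theorem \ref{basic}\ref{thm:dist}, any $I,J\in\mI_v$ satisfy $W_I\cap W_J\ne\es$ (both contain $v$) and hence $I\cap J\ne\es$, so $\mI_v$ is pairwise intersecting. Let $A_v$ be the antichain of minimal elements of $\mI_v$; it is also pairwise intersecting, and is uniquely determined by $\mI_v$. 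Note that the sink corresponds to $A_N=\es$, each terminal $i$ to $A_i=\{\{i\}\}$, and each bulk vertex in $\cup_I W_I$ to some non-empty pairwise intersecting antichain.

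The key step is to argue that the map $v\mapsto A_v$ may be assumed injective. If two distinct vertices $x,y$ satisfy $A_x=A_y$ (equivalently $\mI_x=\mI_y$), then for every $I$ either both $x,y\in W_I$ or both $x,y\notin W_I$. I would then merge $x$ and $y$ into a single vertex $z$, replacing every edge weight $w(v,x),w(v,y)$ by the combined weight $w(v,z)=w(v,x)+w(v,y)$ and discarding the edge $(x,y)$. Under this operation, each minimal min-cut $W_I$ descends to a cut of equal weight in the merged graph, and conversely every cut in the merged graph lifts to a cut of equal weight in the original. Hence $S_I$ is preserved for every $I$, and part \ref{thm:unique} of Theorem \ref{basic} is not disturbed. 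The same argument permits absorbing any bulk vertex sharing an antichain with a terminal (resp.\ the sink) into that terminal (resp.\ the sink).

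After this merging process is carried out exhaustively, distinct vertices of the graph carry distinct pairwise intersecting antichains, so the total number of vertices is bounded by $M(n)$, i.e.\ $m+1\le M(n)$, which yields $m(n)\le M(n)$. The only delicate point is the merging step: one must carefully check that combining edge weights preserves both the value and the minimality-under-inclusion of each min-cut, but this reduces to the observation that a 0/1 vertex-indicator cut either treats $x$ and $y$ together or splits them, and by construction we only merge vertices never split by any $W_I$.
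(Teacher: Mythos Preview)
Your proposal is correct and is essentially the same argument as the paper's. The paper phrases it in terms of the atoms \eqref{Wpart} of the partition of $[N]$ generated by the $W_I$, showing that a non-empty atom forces the family $\mI$ to be a pairwise intersecting upper set; your map $v\mapsto \mI_v$ simply labels each vertex by the atom it sits in, and your merging of vertices with equal $\mI_v$ is exactly the paper's merging of vertices within an atom. Both invoke Theorem~\ref{basic}\ref{thm:nest} and~\ref{thm:dist} at the same points and arrive at the bound by counting pairwise intersecting antichains.
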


\begin{proof}
We first sketch the argument in \cite{Bao:2015bfa} for their upper bound on 
$m(n)$. Suppose a given $S$-vector on $n$ terminals can be realized in a weighted 
$K_N$, for some given $N$. For $I \subseteq [n]$, $W_I$ partitions the vertex set 
$[N]$ of $K_N$ into two subsets. If we intersect these by $W_J$, for some 
$I \ne J \subseteq [n]$, we get $4$ subsets, some possibly empty. After repeating 
for all $2^n-1$ non-empty subsets of $[n]$ we obtain a partition of $[N]$ into 
$2^{2^n-1}$ subsets, many of which may be empty.
However, in each of the non-empty subsets, the vertices of $K_N$ may be merged 
into a single vertex by combining edge weights (cf. Theorem \ref{basic}\ref{thm:bound}). 
This new complete graph has at most $2^{2^n-1}$ vertices and realizes the same min-cut weights 
as before, giving their result.

To improve this bound we use Theorem \ref{basic}. For a set $W\subseteq[N]$, 
denote its complement by $W^\comp = [N] \sminus W$. Any atom in the partition 
just described is formed by splitting the non-empty subsets of $[n]$
into two disjoint, spanning families $\mI$ and $\mJ$, and taking the intersection
\begin{equation}
\label{Wpart}
    \bigcap_{I \in \mI} W_{I}~~ \cap ~~\bigcap_{J \in \mJ} W_J^\comp.
\end{equation}
Suppose this intersection is non-empty. 
Theorem \ref{basic}\ref{thm:dist} implies that $\mI$ is pairwise intersecting,
for otherwise the left intersection in \eqref{Wpart} is empty. In particular, 
this implies that both a subset and its complement cannot be in $\mI$.
In addition, one can show that $\mI$ must either be empty or an upper set in 
$Bool_n$ as follows. If $\mI=\es$, then \eqref{Wpart} is in fact never empty 
because it will always contain vertex $N$. 
As for $\mI\ne\es$, consider two subsets $I \subset K \subseteq [n]$ and
suppose $I \in \mI$ is non-empty. We have by Theorem \ref{basic}\ref{thm:nest}
that $W_I \subseteq W_K$ and so
$W_I \cap W_K^\comp = \es$, implying that if $K \in \mJ$, then \eqref{Wpart} is 
empty. As a result, either $\mI=\es$ or $\mI$ must be a pairwise intersecting 
upper set in $Bool_n$, with $\mJ$ containing all other non-empty subsets of 
$[n]$.

Because empty atoms from \eqref{Wpart} do not contribute to min-cut weights, it 
follows  that when considering $S$-vectors we need only be concerned with pairwise 
intersecting upper sets $\mI$ in $Bool_n$ and $\mI=\es$. As described above, 
the upper sets can be equivalently enumerated as the number of pairwise 
intersecting antichains in $Bool_n$. Since $M(n)$ counts their number,
we conclude that all $S$-vectors with $n$ terminals can be realized in $K_{M(n)}$.
\end{proof}

We have the following reasonably tight asymptotic bounds on $M(n)$.
Let $\bar{M}(n)$ be the total number of antichains in
$Bool_n$. Then, 
\begin{equation}\label{eq:mnbo}
    { \binom{n}{\lfloor \frac{n}{2} \rfloor +1}} ~<~ \log_2 M(n) ~<~
    \log_2 \bar M(n) ~\sim~ \binom{n}{\lfloor \frac{n}{2} \rfloor} ~\sim~
    \frac{2^{n+1}}{\sqrt{2 \pi n}}.
\end{equation}
The asymptotic upper bound on $\bar{M}(n)$ is due to Kleitman and Markowsky \cite{KM75}.
The lower bound can be obtained by considering all subsets of 
$[n]$ of size $\lfloor n/2 \rfloor + 1$. Each  pair of such subsets intersects
and none can properly contain another. So any collection of these subsets forms
an intersecting antichain.
While we do not know of tighter asymptotic bounds for $M(n)$, exact values
are known \cite{brouwer2013counting}\footnote{$M(n)$ is entry $n+1$ in 
Proposition $1.2$ of \cite{brouwer2013counting}.} for 
$1 \le n \le 8$:
\begin{equation}
    2,~~4,~~12,~~81,~~2646,~~1422564,~~229809982112,~~423295099074735261880.
\end{equation}
However, it seems that $M(n)$ is a very poor upper bound on $m(n)$.
For example, data for $1 \le n \le 4$ shows that $m(n)=2,3,5,6$ -- see Section \ref{ilp} 
for more details.

\begin{problem}
\label{prob:boundm}
Find tighter bounds on $m(n)$. In particular, does $\log_2 m(n)$ admit an upper 
bound that is polynomial in $n$?
\end{problem}

Definition \ref{def:hec} suggests the following family of cones, which are useful
in proving the polyhedrality of $H_n$. For any pair of integers $n$ and $N$ 
such that $2\le n+1 \le N$, consider
\begin{equation}
\label{HNndef}
    H_{N,n} = \conv~\{S \in \RR^{{2^n-1}} \;:\;
    \textit{nonnegative weighted $K_N$ s.t. $S$ satisfies \eqref{Sdef}} \}.
\end{equation}
This is a generalization of the cone defined by the
cut functions, which corresponds to the specific case $n=N-1$. 
Without the
convex hull operator in \eqref{HNndef}, $H_{N,n}$ would not be convex in general,
as shown by example in Appendix \ref{newS}.
An important property of $H_{N,n}$ is that it is naturally invariant under 
the action of the symmetric group
$Sym_{n}$ which permutes the elements of the set $[n]$. In fact, $H_{N,n}$ enjoys
a larger symmetry group: it is symmetric under permutations of vertices in the 
extended terminal set $\extn$. The permutations of $K_N$ under $Sym_{n+1}$ yield 
$S$-vectors \eqref{Svector} which are related by the simple fact that 
$C(W)=C(W^\comp)$. Similarly, in an undirected graph any 
min-cut is insensitive to the exchange of its sources and sinks. When talking 
about symmetries under $Sym_{n+1}$, it is thus convenient to identify 
$S_{\extn \sminus I} = S_{I}$. 

It is easy to see that in general $H_{N,n} \subseteq H_{N+1,n}$, since an 
additional bulk vertex can always be added to $K_N$ with all edges containing 
it of weight zero. 
We are now able to prove that $H_n$ is a convex polyhedral cone.
Our bound is tighter than the original proof given in \cite{Bao:2015bfa} due
to our use of antichains.
\begin{corollary}
\label{cor:convex}
For any $n \ge 1$, $H_n$ 
is a convex, polyhedral cone given by $ H_{n} = H_{m(n),n}$.
\end{corollary}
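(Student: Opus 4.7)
The plan is to establish the three claims in the statement in turn: convexity of $H_n$, the identification $H_n = H_{m(n),n}$, and polyhedrality.

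For convexity, I would use a gluing construction. Given two valid pairs $(S^1, w^1)$ on $K_{N_1}$ and $(S^2, w^2)$ on $K_{N_2}$ together with $\lambda \in [0,1]$, build a single complete graph on $N = N_1 + N_2 - n - 1$ vertices by identifying the terminal sets $[n]$ of the two graphs and also identifying the two sinks into one, while keeping the bulk vertices as disjoint sets $B_1, B_2$. Assign weight $\lambda\, w^1(e)$ to every edge inherited only from the first graph, $(1-\lambda)\,w^2(e)$ to every edge inherited only from the second, the sum on edges shared between them (those with both endpoints in $\extn$), and weight $0$ to every cross edge between $B_1$ and $B_2$. For any $\es \ne I \subseteq [n]$, a candidate cut $W$ for $I$ in $K_N$ splits as $W = I \cup W_1' \cup W_2'$ with $W_i' \subseteq B_i$; because the cross edges vanish, the total cut weight decomposes as $\lambda \norm{C_1(I \cup W_1')} + (1-\lambda)\norm{C_2(I \cup W_2')}$, and independent minimization over the two halves yields $S_I = \lambda S^1_I + (1-\lambda)S^2_I$.

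Once convexity is in hand, Corollary \ref{cor:bound} yields the identification $H_n = H_{m(n),n}$: every $S \in H_n$ is already realized by some weighting of $K_{m(n)}$, so $H_n$ coincides as a set with the pre-convex-hull set in \eqref{HNndef}, whence the outer $\conv$ there is redundant. For polyhedrality, I would partition the nonnegative weight cone $\RR^{\abs{E_{m(n)}}}_{\ge 0}$ into finitely many polyhedral subcones, one for each joint assignment, to every non-empty $I\subseteq [n]$, of an achieving cut $W_I \subseteq [m(n)-1]$ with $W_I \cap [n] = I$; each subcone is carved out by the finitely many linear inequalities $\norm{C(W_I)} \le \norm{C(W')}$ over competing choices $W'$. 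On any such cell the min-cut map $w \mapsto S$ restricts to an honest linear map, so its image is a polyhedral cone through the origin, and $H_n$ is the union of these finitely many cones. Since the union is already convex, the union of their finitely many generating rays exhibits $H_n$ as finitely generated, hence polyhedral by Minkowski--Weyl.

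The main obstacle is the gluing step: one has to verify that no cleverly chosen cut in the combined graph can beat the sum of the two minimum-cut weights. This is exactly what the zero weighting of all $B_1$-$B_2$ cross edges and the identification of the vertices of $\extn$ are designed to enforce, preventing any interaction between the two halves beyond what is already fixed by $I$.
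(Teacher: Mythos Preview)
Your proposal is correct and follows essentially the same route as the paper: both prove convexity by the gluing construction (identifying the extended terminals $\extn$ of two weighted graphs, keeping bulk vertices disjoint, and scaling weights), and both then invoke Corollary~\ref{cor:bound} to collapse everything into $K_{m(n)}$ and obtain $H_n = H_{m(n),n}$. Your write-up of the gluing step is in fact more careful than the paper's, spelling out why the zero cross-weights force the min-cut to decompose.

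The one place you add something the paper leaves implicit is polyhedrality. The paper's proof simply asserts that $H_n = H_{m(n),n}$ is polyhedral, without a separate argument; it effectively takes for granted that the min-cut map on the weight cone of a \emph{fixed} finite graph has a finitely generated image. Your cell-decomposition argument (partition $\RR_{\ge 0}^{|E_{m(n)}|}$ by which cut achieves each minimum, observe the map is linear on each cell, take the union of generators) makes this step explicit and is the standard way to justify it. So the approaches coincide, with your version filling in a detail the paper elides.
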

\begin{proof}
Firstly, suppose $S \in H_n$. Then for some $N$ and weight set $w$ the pair 
$(S,w)$ satisfies \eqref{Sdef} and so $S \in H_{N,n} \subseteq H_n$.
Conversely, suppose $S$ is in the convex hull of extreme rays of $H_n$.
Each of these rays can be realized in a weighted $K_N$ for some $N\le m(n)$.
For a specific conical combination of extreme rays giving $S$, 
consider the graph obtained by identifying all of their associated graphs at 
their terminal vertices, each with its edge weights multiplied by the coefficient 
in the associated conical combination. 
This graph clearly still realizes $S$,\footnote{This is 
basically the statement that any flow network problem with multiple source/sink
vertices can be equivalently reformulated in terms of a single supersource/supersink 
vertex connected to each of the sources/sinks with edges of infinite capacity. In 
our case, however, it is preferable to simply merge together terminals of the same 
type into a single ``superterminal'', rather than having unnecessary infinite-weight 
edges.}
and can be thought of as a $K_N$ 
for some large but finite $N$ with many zero-weight edges omitted. 
However, the bound on $N$ in Corollary \ref{cor:bound} guarantees that by 
merging vertices this graph can be reduced to one with  $N\le m(n)$, thus proving convexity.
\end{proof}

Of course, $H_{n}$ inherits the
$Sym_{n+1}$ symmetry discussed above. As a result, when considering the extremal
structure of $H_n$, we will only specify single representatives of symmetry
orbits under $Sym_{n+1}$.

Almost nothing is known about the complexity of computational problems related 
to $H_n$.
\begin{problem}
Given a vector 
$q\in \mathbb{Z}^{2^n-1}$, what is the complexity of 
deciding if $q \in H_n$?
What is the complexity of deciding
whether $qx \ge 0$ is satisfied for all $x \in H_n$? 
\end{problem}

\subsection{Representations of the HEC}\label{ssec:reps}

A basic result of polyhedral geometry is that
any polyhedral cone $C$ can be represented by a non-redundant set of 
facet-defining inequalities, which we can write as $Ax \ge 0$ for a suitably
dimensioned matrix $A$ and variables $x$, and is called an
\textit{$H$-representation}. One can also represent $C$ by a non-redundant list $E$
of its extreme rays, such that $C = \conv\{E\}$, which is called a 
\textit{$V$-representation}. Both representations are unique up to row scaling.
In this paper we are concerned with computing the $H$- and $V$-representations of
$H_n$. Normally, for cones (or polyhedra) arising in discrete optimization,
we have available one or the other of the representations. However,
this is not the case for $H_n$. To proceed we will initially try to find both
\textit{valid inequalities} for $C$, which are those satisfied by all rays in $C$,
and to find \textit{feasible rays} of $C$. A set of valid inequalities forms
an \textit{outer approximation} of $H_n$ and a set of valid rays forms
an \textit{inner approximation}. The following well known basic result
shows when such sets respectively constitute an $H$- and $V$-representation
(see, e.g., Schrijver \cite{S99}).
\begin{proposition}
\label{H-V}
For a given polyhedral cone $C$,
let $Ax \ge 0$ be a non-redundant set of valid inequalities and let
$E$ be a non-redundant set of feasible rays. Then:
\begin{enumerate}[label=(\alph*),leftmargin=2\parindent]
    \item \label{prop:V} An extreme ray of $Ax \ge 0$
is an extreme ray of $C$ if it is feasible for $C$.
    \item \label{prop:H} A facet of $\conv\{E\}$ is a facet of 
    $C$ if it is valid for $C$.
    \item \label{prop:HV} $E$ is precisely the set of extreme rays of 
    $Ax \ge 0$ if and only if they respectively constitute $V$- and 
    $H$-representations of $C$.
\end{enumerate}
\end{proposition}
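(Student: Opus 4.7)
The plan is to exploit the sandwich
\[
    \conv\{E\} \;\subseteq\; C \;\subseteq\; \{x : Ax \ge 0\},
\]
which follows immediately from the hypotheses that $Ax \ge 0$ is valid for $C$ and that the rays of $E$ are feasible for $C$. All three parts will be extracted from this inclusion together with standard facts from polyhedral duality.

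For part (a), I would take an extreme ray $r$ of $\{x : Ax \ge 0\}$ with $r \in C$ and suppose for contradiction that $r = r_1 + r_2$ with $r_1, r_2 \in C$, neither a nonnegative scalar multiple of $r$. The outer inclusion forces $r_1, r_2 \in \{x : Ax \ge 0\}$, contradicting the extremality of $r$ in the outer cone. Hence $r$ is extreme in $C$.

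For part (b), a facet of $\conv\{E\}$ is supported by some inequality $ax \ge 0$ which is tight on $\dim(\conv\{E\}) - 1$ affinely independent rays of $E$. When $ax \ge 0$ is additionally valid for $C$, the set $F = \{x \in C : ax = 0\}$ is a proper face of $C$ containing those same rays. The step that needs care is dimension counting: one needs $\dim(\conv\{E\}) = \dim(C)$, so that a codimension-one face of $\conv\{E\}$ remains codimension-one inside $C$. This is where non-redundancy of $E$ enters, since a deficient $\conv\{E\}$ could not contain enough rays to span the affine hull of $C$. Under this condition, $F$ is of codimension one in $C$ and hence a facet.

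For part (c), both directions reduce to the sandwich. The reverse direction is essentially a tautology: an $H$-representation and a $V$-representation of the same cone have, respectively, the facet-defining inequalities and the extreme rays of that cone. For the forward direction, suppose $E$ is precisely the set of extreme rays of the outer cone $C' = \{x : Ax \ge 0\}$. The $V$-representation theorem for polyhedral cones gives $C' = \conv\{E\}$, so the sandwich collapses to $\conv\{E\} = C = C'$, and the assumed non-redundancy of $E$ and of $Ax \ge 0$ then promotes them to $V$- and $H$-representations of $C$.

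The main obstacle is the dimension bookkeeping in part (b); parts (a) and (c) reduce to mechanical manipulations of the two inclusions supplied by the hypotheses.
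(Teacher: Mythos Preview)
The paper does not supply a proof of this proposition; it is invoked as a well-known basic result. Your sandwich
\[
\conv\{E\}\;\subseteq\;C\;\subseteq\;\{x:Ax\ge 0\}
\]
is the standard device, and your arguments for parts~(a) and~(c) are correct (modulo the usual implicit assumption that the cones involved are pointed, so that a polyhedral cone is generated by its extreme rays).

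For part~(b), you correctly isolate the crux: one needs $\dim\conv\{E\}=\dim C$ so that a codimension-one face of $\conv\{E\}$ forces a codimension-one face of $C$. However, your justification of this equality is wrong. Non-redundancy of $E$ only says that each element of $E$ is an extreme ray of $\conv\{E\}$; it imposes no relation between $\dim\conv\{E\}$ and $\dim C$. A single feasible ray is already non-redundant. Concretely, take $C=\RR^3_{\ge 0}$ and $E=\{(1,1,0),(0,0,1)\}$: the set $E$ is non-redundant, the inequality $x_1+x_2\ge 0$ supports a facet of the two-dimensional cone $\conv\{E\}$ and is valid for $C$, yet $C\cap\{x_1+x_2=0\}$ is one-dimensional and hence not a facet of $C$. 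So the step ``non-redundancy $\Rightarrow$ equal dimension'' fails, and without it the conclusion of~(b) fails too.

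The resolution is that the proposition tacitly assumes $\conv\{E\}$ and $C$ have the same dimension (equivalently, in the paper's setting, that both are full-dimensional). The paper enforces this in its applications: see the initialization of Method~2 in Section~\ref{H5}, where the inner approximation is explicitly required to be full-dimensional. With that hypothesis added, your dimension count in~(b) goes through exactly as you wrote it.
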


Parts \ref{prop:V} and \ref{prop:H} lead to a kind of bootstrapping process which 
terminates once \ref{prop:HV} can be applied.
This is described in detail
in Section \ref{H5}, but to illustrate we now look at some small values of $n$.

For $n=1$, the $S$-vectors are the nonnegative 
real numbers. There is one facet $S_1 \ge 0$ and one extreme ray with $S_1 = 1$, and
Proposition \ref{H-V}\ref{prop:HV} is readily verified. Notice that this 
simple $n=1$ extreme ray can be represented in $K_2$ with the single edge $(1,2)$ having
weight $1$. More generally, any $K_N$ where two distinct singletons $i,j\in\extn$ share a
unit-weight edge and all other weights are zero will be called an $(i,j)$ 
\textit{Bell pair}. The $S$-vector of such a Bell pair has nonzero $S_I=1$ if and 
only if either $i\in I$ or $j\in I$, but not both.

For $n=2$ we have $S=(S_1, S_2, S_{12})$ and a valid inequality
$S_1 + S_2 \ge S_{12}$.
This follows since the union of cuts for two terminals $i,j\in\extn$ is always a cut for 
the union of the two terminals $\{i,j\}$.
Hence the latter's min-cut cannot have 
larger weight than the sum of the weights of the two other cuts. 
The full orbit of $S_1 + S_2 \ge S_{12}$ contains $S_1 + S_{12} \ge S_{2}$ and 
$S_{12} + S_2 \ge S_{1}$. All three are related by the $S_3$ symmetry of permutations
of $[2;N]$ and the identification of $S_{[2;N]\sminus I} = S_I$ for all 
$I\subseteq[2;N]$. In the context of information theory, the first one is known 
as \textit{subadditivity} (SA), while the latter two are called the Araki-Lieb inequalities.

Subadditivity generalizes to a valid inequality for any disjoint, non-empty subsets of 
terminals $I,J$ with $I \cup J\subset\extn$ to give 
\begin{equation}
\label{eq:SA}
    \text{SA:} \qquad S_I + S_J \ge S_{IJ}.
\end{equation}
We exclude $I \cup J = \extn$ since in this case SA reduces to nonnegativity.
Every inequality in the enlarged symmetry orbit $Sym_{n+1}$ of \eqref{eq:SA} clearly 
remains valid. For any disjoint subsets of terminals $I,J\subseteq\extn$, those of 
Araki-Lieb type take the form $S_I + S_{I\cup J} \ge S_{J}$. 
The qualitative difference between SA and Araki-Lieb is that in the former the disjoint 
subsets $I,J\subseteq\extn$ do not contain the sink, 
whereas in the latter one of them does, and 
whenever $N\in I \subseteq\extn$ one identifies $S_I = S_{\extn\sminus I}$. 
For future convenience, we introduce
\begin{equation}\label{eq:mi}
    \mi{I:J}S = S_I + S_J - S_{IJ},
\end{equation}
known as the \textit{mutual information} in the physics community. This way, SA 
corresponds to the nonnegativity of the mutual information $\mi{I:J}S\ge 0$.

Proceeding as suggested by 
Proposition \ref{H-V}\ref{prop:V}, we can compute the extreme rays arising 
from \eqref{eq:SA} by the action of $Sym_3$, together with the $3$ 
nonnegativity inequalities (in fact, the latter are redundant and can be ignored).
Doing so we obtain $3$ extreme rays related by 
symmetry, of which one is $S=(1,1,0)$. 
This can be represented in $K_3$ by a $(1,2)$ Bell pair. 
Obviously, this can also be obtained from the $(1,2)$ Bell pair
for $n=1$ in $K_2$ by adding a new vertex with all edge weights to it zero. 
This is a process called a \textit{zero-lifting} of extreme rays and which we discuss 
in detail in Section \ref{zerolift}.
So if we let $E$ be the set of $3$ 
output rays and $Ax \ge 0$ be the $3$ SA inequalities we are again 
done by Proposition \ref{H-V}\ref{prop:HV}.

\section{Extreme rays}
\label{rays}

Since we do not have an $H$-representation of $\HNn$ we have no direct
way to compute its extreme rays.
In this section we introduce a lifting of $\HNn$ to a cone for which
we can explicitly write an $H$-representation. Computing the extreme
rays of this cone and then making a projection allows us to compute
a superset of the extreme rays of $\HNn$. This construct provides the first 
systematic procedure for obtaining $H_n$, 
thus significantly improving on the random searches used so far in building the 
HEC \cite{Bao:2015bfa,Cuenca:2019uzx}. We also describe a
zero-lifting operation, which allows known extreme rays for $S$-vectors 
defined on $n$ terminals to generate extreme rays for those defined on
$n+1$ terminals.

\subsection{A lifting of the HEC}
\label{rlift}
For integers $n$ and $N$ such that $3 \le n+1 \le N$, consider the following system of inequalities
whose variables are the $S$-vector entries $S_I$ and the edge weights $w(e)$ of $K_N$:
\begin{subequations}
\label{SCnn}
\begin{alignat}{3}
\label{SC}
S_I &~\le~ \norm{C(W)}, \qquad &&\forall~W\subseteq [N-1],~\es\ne I\subseteq[n] 
\textit{ s.t. } I=W \cap [n], \\
\label{nn}
w(e) &~\ge~ 0, \qquad &&\forall~e\in E_N.
\end{alignat}
\end{subequations}
Note that for each cut $C(W)$ there is precisely one $I$ such that $I=W \cap [n]$.
Since we are excluding the case $I=\es$,
this implies that \eqref{SC} contains $2^{N-1}-2^{N-n-1}$ inequalities.
There are an additional $N(N-1)/2$ nonnegative inequalities from \eqref{nn}.
Since each $\norm{C(W)}$ is just a sum of weights $w(e)$ 
for every $e\in C(W)$, together with the $2^n-1$ variables $S_I$,
there are a total of $M_{N,n}=2^n-1+N(N-1)/2$ variables involved 
in \eqref{SCnn}. 
Let $(S,w)$ denote a vector of length $M_{N,n}$ 
representing these variables. 
\begin{definition}
The cone $\PNn$ denotes the set of all $(S,w)$ satisfying \eqref{SCnn}.
\end{definition}

Since $\PNn$ is given explicitly by \eqref{SCnn}, it is a rational cone.
Note that the variables $S_I$ in $\PNn$ are not bounded from below. One could 
add the inequalities $S_I \ge 0$ but this greatly increases the complexity of 
the cone, as explained below.

The cones $\HNn$ and $\PNn$ cannot be directly compared,
since they are defined in different dimensional spaces.
So we first define a lifting of $\HNn$ by
\begin{equation}
    H^+_{N,n} = \conv~\{(S,w) \in P_{N,n} \;:\; S \in H_{N,n} \}.
\end{equation}
which is by definition a subset of $\PNn$ and whose projection onto the $S$ 
coordinates is $\HNn$. We similarly define $H^+_n$.
An example in Appendix \ref{newS} shows that $H^+_{N,n}$ is in general non-convex
without the convex hull operator.
It is easy to see that $\PNn$ contains $M_{N,n}$ \textit{trivial} extreme rays. 
These are formed by setting either one $S_I =-1$ or one $w(e)=1$, and all other variables zero. 

Each extreme ray of $\HNn$ is the projection of a non-trivial extreme ray of $\PNn$, as we now show:
\begin{theorem}
\label{thm1}~
\begin{enumerate}[label=(\alph*),leftmargin=2\parindent]
    \item \label{thm:Sw} If $(S,w)$ is a non-trivial extreme ray of $\PNn$, 
    then $S \in \HNn$.
    \item \label{thm:HP}  If $S$ is an extreme ray of $\HNn$, then there is
    a weight vector $w$ such that $(S,w)$ is a non-trivial extreme 
    ray of $\PNn$.
\end{enumerate}
\end{theorem}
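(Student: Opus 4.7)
The plan is to prove (a) by a short perturbation argument on the active inequalities, and (b) by decomposing a lift $(S,w^*)\in P_{N,n}$ into the finitely many extreme rays of $P_{N,n}$ and then using concavity of the min-cut function together with extremality of $S$ in $H_{N,n}$ to extract an extreme ray of the desired form. Throughout I write $m_I(w):=\min_{I=W\cap[n]} \norm{C(W)}$ for the value of the minimum $I$-cut under weights $w$, so that $w$ realizes $S$ precisely when $m_I(w)=S_I$ for every nonempty $I\subseteq[n]$; and I denote by $\hat e_I$ and $\hat e_e$ the standard basis vectors in $S$-space and $w$-space respectively, so the trivial rays of $P_{N,n}$ are $(-\hat e_I,0)$ and $(0,\hat e_e)$.

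For (a), let $(S,w)$ be a non-trivial extreme ray of $P_{N,n}$. First $w\neq 0$: intersecting $P_{N,n}$ with the face $\{w=0\}$ leaves only the nonpositive $S$-orthant, whose extreme rays are the trivial $(-\hat e_I,0)$. Next, suppose $S_I<m_I(w)$ strictly for some nonempty $I$; then both $(S\pm\epsilon\hat e_I,w)$ lie in $P_{N,n}$ for small $\epsilon>0$ with average $(S,w)$, and neither is proportional to $(S,w)$ (because $w\neq 0$), contradicting extremality. Hence $S_I=m_I(w)$ for every $I$, so $w$ realizes $S$ and $S\in H_{N,n}$.

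For (b), let $S$ be an extreme ray of $H_{N,n}$. Since the set of realizable $S$-vectors is a finite union of polyhedral cones, the extreme rays of its convex hull $H_{N,n}$ lie in this set, so I may choose $w^*$ with $m_I(w^*)=S_I$ for every $I$; then $(S,w^*)\in P_{N,n}$. As $P_{N,n}$ is a pointed polyhedral cone, decompose
\[(S,w^*) \;=\; \sum_j \lambda_j\,(S^{(j)},w^{(j)}) + \sum_I \mu_I\,(-\hat e_I,0) + \sum_e \nu_e\,(0,\hat e_e),\]
separating non-trivial extreme rays (indexed by $j$) from trivial ones. Part (a) gives $m_I(w^{(j)})=S^{(j)}_I$ for each $j$. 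As a minimum of linear functionals, $m_I$ is superadditive; combining this with monotonicity applied to $\sum_j \lambda_j w^{(j)}=w^*-\sum_e \nu_e\,\hat e_e \le w^*$ yields
\[\sum_j \lambda_j S^{(j)}_I \;\le\; m_I\!\left(\sum_j \lambda_j w^{(j)}\right) \;\le\; m_I(w^*) \;=\; S_I.\]
But the $S$-component of the decomposition reads $\sum_j \lambda_j S^{(j)}_I = S_I + \mu_I \ge S_I$, so all inequalities are equalities, forcing $\mu_I=0$ for every $I$ and $\sum_j \lambda_j S^{(j)} = S$.

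By (a) each $S^{(j)}\in H_{N,n}$, and since $S$ is an extreme ray of $H_{N,n}$, necessarily $S^{(j)}=\beta_j S$ for some $\beta_j\ge 0$. A non-trivial extreme ray of $P_{N,n}$ cannot have $S^{(j)}=0$ (else $(0,w^{(j)})$ extreme would force $w^{(j)}\propto \hat e_e$, a trivial ray), so $\beta_j>0$ whenever $\lambda_j>0$; and at least one such $\lambda_j$ must be positive, else $(S,w^*)$ would be a sum of trivial $(0,\hat e_e)$ rays and $S=0$, a contradiction. For any such index $j$, the ray $r_j/\beta_j=(S,w^{(j)}/\beta_j)$ is itself a non-trivial extreme ray of $P_{N,n}$ projecting to $S$, so $w:=w^{(j)}/\beta_j$ does the job. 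The main technical step is the superadditivity-plus-monotonicity argument that kills the $(-\hat e_I,0)$ contributions and pins $\sum_j \lambda_j S^{(j)}=S$ exactly; without it the extremality of $S$ in $H_{N,n}$ could not be invoked, since $P_{N,n}$ extends well outside the nonnegative $S$-orthant that contains $H_{N,n}$.
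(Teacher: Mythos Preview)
Your proof is correct. Part (a) is essentially the same perturbation argument as the paper's (the paper phrases it as ``$S_I$ could be increased independently'', you as a two-sided perturbation with the extra observation $w\neq 0$), so there is nothing to add there.

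Part (b) takes a genuinely different route. The paper first restricts to a carefully chosen face $F$ of $P_{N,n}$, obtained by setting to equality all the min-cut constraints achieved by the realizing weight $\bar w$ together with $w(e)=0$ whenever $\bar w(e)=0$; on this face one sees directly that every point has $S_I=\norm{C(W)}\ge 0$, so the trivial rays $(-\hat e_I,0)$ simply cannot lie on $F$, and the decomposition of $(S,\bar w)$ into extreme rays of $F$ is automatically free of them. You instead decompose $(S,w^*)$ in all of $P_{N,n}$ and then use concavity of the min-cut value (superadditivity plus monotonicity of $m_I$) to squeeze $\sum_j\lambda_j S^{(j)}_I$ between $S_I$ and $S_I+\mu_I$, forcing $\mu_I=0$. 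The paper's approach is more geometric and avoids invoking any analytic property of $m_I$ beyond what defines $P_{N,n}$; yours is more self-contained in that it needs no auxiliary face, at the price of the extra (but easy) superadditivity observation. Both arrive at exactly the same endpoint: $\sum_j\lambda_j S^{(j)}=S$ with each $S^{(j)}\in H_{N,n}$, after which extremality of $S$ finishes the job identically.
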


\begin{proof}~
\begin{enumerate}[label=(\alph*),leftmargin=2\parindent]
    \item \label{prf:Sw} 
        Suppose that $(S,w)$ defines a non-trivial extreme ray of $\PNn$. 
        Then there must be a set of at least $M_{N,n}-1$ inequalities 
        in \eqref{SCnn} satisfied as equations whose solutions have the form 
        $\lambda (S,w)$, with $\lambda \ge 0$.
        Each $S_I$ must be present in at least one of these equations or else it 
        could be increased independently of the others and the resulting vector 
        $(S',w)$ would still be a solution of the equations but not of that form.
        This in turn implies that $S$ satisfies \eqref{Sdef} for the given weight 
        function $w$. Hence $S \in \HNn$.
    \item \label{prf:HP}
        Suppose $S$ is an extreme ray of $\HNn$. Since $S$ satisfies \eqref{Sdef},
        all of its values are nonnegative and there must exist a corresponding 
        weight assignment $\bar{w}$. We now define a face $F$ of $\PNn$ by 
        intersecting it with the hyperplanes
    \begin{subequations}
    \begin{alignat}{3}
        \label{eq:sieq}
        S_I &~=~ \norm{C(W)}, \qquad &&\forall~W \text{ achieving the minimum 
        in \eqref{Sdef}}, \\
        \label{eq:we}
        w(e) &~=~ 0, \qquad &&\forall~e\in E_N \textit{ s.t. } \bar{w} (e)=0.
    \end{alignat}
    \end{subequations}
    Each $S_I$ appears in at least one equation. $F$ is defined by a set of 
    extreme rays of $\PNn$ but none of these can be a trivial ray of the type 
    $S_I=-1$ since $S_I = \norm{C(W)} \ge 0$. There may be a trivial extreme 
    ray of the type $w(e)=1$ as long as 
    $\bar{w}(e) \ne 0$ and the edge $e$ does not appear in any of the cuts 
    $C(W)$ in the system of hyperplanes. Suppose that there are $s$ of these 
    and write each of them as $1_e$. Also, denote 
    the non-trivial extreme 
    rays of $\PNn$ that lie on $F$ by $(S^i,w^i)$, with $i\in[t]$.
    From part \ref{prf:Sw} above, we have that 
    $S^i\in \HNn$ for all $i\in[t]$.
    Writing $(S,w)$ as a conical combination of the extreme rays that define $F$,
    \begin{equation}
        (S,w) =\sum_{i=1}^t \lambda_i (S^i,w^i)+\sum_{j=1}^s \mu_j 1_{e_j}, 
        \qquad \lambda_i, \mu_j \ge 0,
    \end{equation}
    we deduce that $S=\sum_{i=1}^t \lambda_i S^i$. Since $S$ is an extreme ray 
    of $\HNn$ it follows that $S=S^i$ for all $i$ for which $\lambda_i >0$. 
    For each such $i$, $(S,w^i)$ is a non-trivial extreme ray of $\PNn$.
\end{enumerate}
\vspace{-10pt}
\end{proof}

It was initially hoped that non-trivial extreme rays of $\PNn$ would project to extreme rays
of $\HNn$, but an example in Appendix \ref{newS} show that this is not the case. 
Also, a direct projection of $\PNn$ onto its $S$ coordinates does not 
give $\HNn$. For any given values of the $S$ coordinates, a feasible solution to 
inequalities \eqref{SCnn} can be obtained by choosing any suitably large 
$w$ coordinates.
Hence the projection is simply the whole space $R^{2^n-1}$.
Nevertheless, we do obtain a method in principle for obtaining a complete description
of $H_n$:
\begin{corollary}
\label{cor:V-des}
A $V$-description of $H_n$ can be obtained by computing the extreme rays
$(S,w)$ of $P_{m(n),n}$, projecting to the $S$ coordinates, and removing
both the trivial and the redundant rays.
\end{corollary}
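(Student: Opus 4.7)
The plan is to chain Corollary \ref{cor:convex} with both parts of Theorem \ref{thm1} applied at $N=m(n)$. Since Corollary \ref{cor:convex} gives $H_n=H_{m(n),n}$, it suffices to enumerate the extreme rays of $H_{m(n),n}$. Let $L$ be the set of $S$-projections of all extreme rays of $P_{m(n),n}$, and let $L'$ be obtained from $L$ by discarding the projections of the trivial rays.

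I would then verify the two inclusions needed to identify a non-redundant subset of $L'$ with the extreme rays of $H_n$. For one direction, Theorem \ref{thm1}\ref{thm:Sw} guarantees $S\in H_{m(n),n}=H_n$ whenever $(S,w)$ is a non-trivial extreme ray of $P_{m(n),n}$; hence every element of $L'$ is a valid ray of $H_n$. For the other direction, Theorem \ref{thm1}\ref{thm:HP} produces, for each extreme ray $S$ of $H_n$, a weight vector $w$ making $(S,w)$ a non-trivial extreme ray of $P_{m(n),n}$; thus $S\in L'$. Since $L'\subseteq H_n$ is a finite set containing every extreme ray of $H_n$, removing from $L'$ every vector that is a nonnegative combination of the others leaves exactly this extremal set, which by definition constitutes a $V$-representation of $H_n$.

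The main subtlety, rather than an obstacle, is that the projection does not preserve extremality: some non-trivial extreme rays of $P_{m(n),n}$ project to non-extreme vectors of $H_n$, as the counterexample in Appendix \ref{newS} illustrates. This is precisely why the final redundancy-removal step cannot be skipped. It is also worth observing that the trivial rays of $P_{m(n),n}$ project either to $-e_I$ (from $S_I=-1$) or to the zero vector (from $w(e)=1$), neither of which belongs to $H_n$; these must be purged before running the conical redundancy test, after which only the extremal $S$-vectors of $H_n$ survive.
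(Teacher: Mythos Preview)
Your argument is correct and follows the same route as the paper: invoke Corollary~\ref{cor:convex} to reduce to $H_{m(n),n}$, use Theorem~\ref{thm1}\ref{thm:HP} to capture every extreme ray of $H_{m(n),n}$ among the projected non-trivial extreme rays of $P_{m(n),n}$, and finish with redundancy removal. You are slightly more explicit than the paper in citing Theorem~\ref{thm1}\ref{thm:Sw} to ensure the projections land in $H_n$; one tiny slip is that the zero vector (the projection of a trivial ray $w(e)=1$) does lie in $H_n$ as the apex of the cone, though of course it is not an extreme ray and is harmlessly discarded.
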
 
\begin{proof}
By Theorem \ref{thm1}\ref{thm:HP} every extreme ray $S\in H_{m(n),n}$ 
appears in some non-trivial extreme ray $(S,w)$ of $P_{m(n),n}$.
Projecting the latter to the $S$ coordinates produces all extreme rays
of $H_{m(n),n}$. Redundant rays can be removed by linear programming.
Since $H_n = H_{m(n),n}$, the result follows by Corollary \ref{cor:convex}.
\end{proof}

This important new result allows for a constructive derivation of $H_n$ 
(for which no explicit representation is known) starting from $P_{m(n),n}$ 
(for which a $V$-representation is straightforward to write down).
Since $P_{m(n),n}$ is a rational cone, as remarked earlier, it follows that $H_n$ is also.
This fact, proven in Proposition $7$ of \cite{Bao:2015bfa} by a different technique,
means extreme rays and facets always admit integral representations.
With our current upper bound on $m(n)$ the
computation is impractical except for small values of $n$.
It does not help here to include the extra inequalities $S_I \ge 0$
because this introduces a large number of new extreme rays $(S,w)$ of $P_{N,n}$
which are not valid pairs.
For example, while $P_{6,4}$ has $50$ extreme rays of which $15$ are trivial, 
adding nonnegativity yields $49915$ extreme rays, all but $35$ of which 
do not yield valid $(S,w)$ pairs. 
A computationally lighter method to test whether a given $S$-vector
is realizable is as follows:
\begin{theorem}
\label{direct}
If $\bar{S} \in \HNn$, then there is a weight vector
$\bar{w}$ such that $(\bar{S},\bar{w})$ is a vertex of the polyhedron $Q$ 
defined as the intersection of $\PNn$ and the hyperplanes
$S=\bar{S}$.\footnote{The converse of this is false,
as $Q$ generally has many vertices $(\bar{S},w)$ for which $w$ does not realize $\bar{S}$.}
\end{theorem}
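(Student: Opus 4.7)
The plan is to establish Theorem \ref{direct} in two stages: first show that $Q$ is non-empty by constructing an explicit feasible point, and then argue that $Q$ is pointed, which together with non-emptiness forces the existence of a vertex.

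For non-emptiness, recall that $H_{N,n}$ is by definition the convex hull of $S$-vectors realized on $K_N$. Hence we may write $\bar{S} = \sum_k \lambda_k S^{(k)}$ with $\lambda_k \ge 0$ and $\sum_k \lambda_k = 1$, where each $S^{(k)}$ is realized on $K_N$ by a weight map $w^{(k)}$ satisfying \eqref{Sdef}. Setting $\hat{w} = \sum_k \lambda_k w^{(k)} \ge 0$ and denoting the cut weight under a weight map $w$ by $\norm{C(W)}_w$, linearity of the cut functional gives, for every $W \subseteq [N-1]$ with $\es \ne I = W \cap [n]$,
\[
    \norm{C(W)}_{\hat w} = \sum_k \lambda_k \norm{C(W)}_{w^{(k)}} \ge \sum_k \lambda_k S^{(k)}_I = \bar{S}_I,
\]
since each $S^{(k)}_I$ is a minimum over cuts for $I$ in $(K_N,w^{(k)})$. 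Therefore $(\bar{S}, \hat{w})$ satisfies all inequalities in \eqref{SCnn} together with $S = \bar S$, placing it in $Q$.

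For pointedness, any recession direction of $Q$ has zero $S$-component (since the equations $S = \bar S$ are fixed) and a $w$-component $d$ obeying $d(e) \ge 0$ (from $w(e) \ge 0$). A lineality direction must in addition satisfy $-d(e) \ge 0$, forcing $d = 0$. Hence the lineality space of $Q$ is trivial, and any non-empty polyhedron with trivial lineality space contains at least one vertex. Letting $(\bar S, \bar w)$ be any such vertex establishes the theorem.

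The main conceptual point, more than a technical obstacle, is that the convex-hull definition of $H_{N,n}$ allows $\bar{S}$ to be non-realizable on $K_N$ as a single min-cut function. The averaged weight $\hat{w}$ above only certifies upper bounds $\norm{C(W)}_{\hat w} \ge \bar S_I$ rather than equalities, so it need not itself be a vertex of $Q$; the desired $\bar w$ emerges from the vertex structure of $Q$ and will in general differ from $\hat w$, consistent with the footnote that the converse of the theorem fails.
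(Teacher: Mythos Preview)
Your proof is correct and takes a genuinely different route from the paper's. The paper begins with a single realizing weight $\bar w$ for $\bar S$ on $K_N$, so that $(\bar S,\bar w)\in Q$, then writes this point as a convex combination $(\bar S,\bar w)=\sum_i\lambda_i(\bar S,w^i)$ of vertices of $Q$ and uses linearity of the cut functional to show that each $w^i$ with $\lambda_i>0$ also realizes $\bar S$. This yields more than the theorem claims---a vertex whose weights actually achieve the minima in \eqref{Sdef}---but it tacitly assumes $\bar S$ is realizable on $K_N$, which need not hold for a general point of the convex hull $H_{N,n}$ (cf.\ Appendix~\ref{newS}). Your argument avoids this by averaging realizing weights across a convex decomposition of $\bar S$ to certify mere feasibility of $(\bar S,\hat w)$, and then invoking pointedness (trivial lineality from the constraints $w(e)\ge 0$) to obtain a vertex. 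So you handle all of $H_{N,n}$ at the cost of forgoing the stronger conclusion that the vertex realizes $\bar S$; since the theorem as stated only asserts existence of a vertex, your proof matches the stated hypothesis more precisely.
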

\begin{proof}
First we suppose that $\bar{S} \in \HNn$. Then there exists a weight vector
$\bar{w}$ for $K_N$ that realizes $\bar{S}$. 
By construction $(\bar{S},\bar{w})$ is contained in $Q$ and so $Q$ is non-empty.
Since all components of $S$ in $Q$ are fixed, $Q$ is a possibly unbounded
polyhedron with vertices of the form $(\bar{S},w)$ and possibly additional 
extreme rays of the form $1_{e_j}$ for edges $e_j$ 
that do not appear in any 
minimum-weight 
cut $W$ obeying $W\cap[n]=I$ and $\norm{C(W)}=S_I$. 
We can write
\begin{equation}
\label{bb}
    (\bar{S},\bar{w}) = \sum_{i=1}^t \lambda_i \; (\bar{S},w^i),
    \qquad \sum_{i=1}^t \lambda_i=1, \qquad \lambda_i \ge 0,
\end{equation}
for a set of $t$ vertices $(\bar{S},w^i)$ of $Q$. For a cut $W$ in $K_N$, let 
$\norm{\bar{C}(W)}$ and $\norm{C^i(W)}$ denote its weight using $\bar{w}$ and 
$w^i$, respectively. Since $\bar{w}$ is a realization of $\bar{S}$, for each 
$\es \ne I \subseteq [n]$ there exists a min-cut $W_I$ such that 
$\bar{S}_I=\bar{C}(W_I)$.
Now, by \eqref{bb} and the linearity of the cut weight function,
\begin{equation}
    \norm{\bar{C}(W_I)}= \sum_{i=1}^t \lambda_i \; \norm{C^i(W_I)},
    \qquad \sum_{i=1}^t \lambda_i=1, \qquad \lambda_i \ge 0.
\end{equation}
Hence by \eqref{SC} we must also have 
$\bar{S}_I=\norm{C^i(W_I)}$ for all $i\in[t]$.
It follows that each $(\bar{S},w^i)$ is a vertex of $Q$ representing $\bar{S}$.
\end{proof}

We now show how these theorems can help in determining the extremal structure of $H_3$.
For example,
we can compute the extreme rays of $P_{5,3}$, project onto the $S$-coordinates,
delete the trivial rays and remove redundancy, getting a $V$-representation of $H_{5,3}$.
It contains
$17$ extreme rays in $7$ dimensions.
The $H$-representation of $H_{5,3}$ is easy to compute and contains
$7$ facets. One facet is new and the other $6$ are SA inequalities:
\begin{equation}
\label{SA3}
    S_i + S_j \ge S_{ij}, \qquad i \ne j \in [3;N],
\end{equation}
where $N$ is the sink and we recall that $S_{[3;N] \sminus I} = S_I$. 
Earlier we saw that for $n=2$ there is one SA orbit of $3$ facets: 
$S_1 + S_2 \ge S_{12}$, $S_1 + S_{12} \ge S_2$, and $S_{12} + S_2 \ge S_1$. 
Although the first one remains a facet of the form of \eqref{SA3} for $n=3$, 
the other two do not, which may seem surprising.
We return to this in Section \ref{flift} when discussing the lifting 
of facets (see Proposition \ref{prop:sal}).

The cone $H_{5,3}$ has one new facet which is inequality \eqref{eq:mmi} below. 
If we can prove this is valid for $H_3$ then, by Proposition \ref{H-V}\ref{prop:HV}, 
we will have complete $H$- and $V$-representations. How to prove validity of an 
inequality is discussed in Section \ref{facets}. 

\subsection{Zero-lifting extreme rays}
\label{zerolift}
Extreme rays for $\PNn$ remain extremal for larger values of $N$ 
as the following result describes:

\begin{proposition}
\label{zero}
If $(S,w)$ is an extreme ray of $\PNn$, then, by adding suitably many new 
weight coordinates set to zero, it is an extreme ray $(S,w')$ of $P_{N',n}$ for
every $N' > N$. Hence $\PNn$ is a projection of $P_{N',n}$.
\end{proposition}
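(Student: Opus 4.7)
The plan is to construct the lift explicitly and then certify extremality in $P_{N',n}$ by a tight-constraint count analogous to the one in the proof of Theorem \ref{thm1}. I would first fix an embedding $\phi:[N]\hookrightarrow[N']$ defined by $\phi(i)=i$ for $i\in[N-1]$ and $\phi(N)=N'$, aligning the two sinks, and extend $w$ to a weight map $w'$ on $K_{N'}$ by setting $w'(\phi(e))=w(e)$ for $e\in E_N$ and $w'(e)=0$ on every remaining edge (all of which are incident to a \emph{new} bulk vertex $v\in\{N,\dots,N'-1\}$). For any $W'\subseteq[N'-1]$ with $W'\cap[n]=I\ne\es$, the subset $W:=W'\cap[N-1]$ obeys $W\cap[n]=I$, and because every edge of $C(W')$ touching a new bulk vertex carries weight zero under $w'$, one obtains $\norm{C(W')}_{w'}=\norm{C(W)}_{w}$; since $(S,w)\in\PNn$, this yields $S_I\le\norm{C(W')}_{w'}$, and hence $(S,w')\in P_{N',n}$.

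Next, I would promote this to extremality. Being an extreme ray of $\PNn$ means that at $(S,w)$ there is a set of defining inequalities of $\PNn$ holding as equations whose gradients span an $(M_{N,n}-1)$-dimensional subspace of $\RR^{M_{N,n}}$. Under $\phi$, each tight cut equation $S_I=\norm{C(W)}_{w}$ pulls back to a tight cut equation $S_I=\norm{C(W')}_{w'}$ in $P_{N',n}$ (take $W'=W$ viewed as a subset of $[N'-1]$), and each tight $w(e)=0$ pulls back to $w'(\phi(e))=0$; as $\phi$ is injective on the old coordinates, linear independence is preserved. I would then adjoin the $\binom{N'}{2}-\binom{N}{2}$ new nonnegativity equations $w'(e)=0$, one per edge incident to a new bulk vertex. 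Their gradients are standard basis vectors supported entirely on the new coordinates and so are linearly independent from all the pulled-back constraints; the total number of independent tight constraints therefore becomes
\begin{equation*}
    (M_{N,n}-1)+\binom{N'}{2}-\binom{N}{2}=M_{N',n}-1,
\end{equation*}
certifying that $(S,w')$ spans an extreme ray of $P_{N',n}$. The projection claim then follows: letting $F\subseteq P_{N',n}$ be the face where $w'(e)=0$ on every new edge and $\pi$ be the linear map forgetting those coordinates and identifying the rest via $\phi^{-1}$, the two steps above show that $\pi|_F$ is a bijection onto $\PNn$ whose inverse is precisely the lift just constructed, exhibiting $\PNn$ as a projection of $P_{N',n}$.

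The main conceptual hurdle I expect is the dual role played by vertex $N$, which is the sink inside $\PNn$ but becomes an ordinary bulk vertex inside $P_{N',n}$, with the sink role passing to $N'$. Once the sink-preserving identification $\phi$ is in place, the cut-weight identity $\norm{C(W')}_{w'}=\norm{C(W)}_{w}$ is immediate from the zero weights on new edges, and what remains is a routine dimension count supplying exactly the right number of new independent tight nonnegativities to make up the extra slack in $M_{N',n}-M_{N,n}$.
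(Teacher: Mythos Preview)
Your argument is correct and is essentially the paper's: both pick $M_{N,n}-1$ tight defining constraints of $\PNn$ at $(S,w)$, transport them to $P_{N',n}$ via the sink-preserving relabelling $\phi$, and adjoin the nonnegativity equations on the new zero-weight edges to reach rank $M_{N',n}-1$; the paper does only the step $N'=N+1$ and iterates, whereas you treat general $N'$ in one go and are more explicit about why linear independence survives.

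One small caveat on your final sentence. Your coordinate-forgetting map $\pi$ does \emph{not} send all of $P_{N',n}$ into $\PNn$: already for $n=1$, $N=2$, $N'=3$ the point $S_1=w'(1,2)=w'(2,3)=1$, $w'(1,3)=0$ lies in $P_{3,1}$ but $\pi$ sends it to $(S_1,w(1,2))=(1,0)\notin P_{2,1}$. So $\pi(P_{N',n})\supsetneq\PNn$ in general, and $\pi$ alone does not ``exhibit $\PNn$ as a projection of $P_{N',n}$'' in the sense $\pi(P_{N',n})=\PNn$. What you have actually (and correctly) shown is that $\PNn$ is linearly isomorphic to the face $F$ of $P_{N',n}$, which is all the paper's ``Hence'' clause requires; if you want a genuine linear surjection onto $\PNn$, the right map is the vertex-merging one (summing weights of edges to new bulk vertices into the corresponding sink edges), not coordinate forgetting.
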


\begin{proof}
Let $(S,w)$ define an extreme ray of $\PNn$ and consider the base graph $K_{N+1}$.
We extend the weight vector $w$ by adding $N$ new coordinates to get a
vector $w'$ of length $(N+1)N/2$.
We set $w'(i,j)=w(i,j)$ for $1 \le i < j < N$ and $w'(i,N+1)=w(i,N)$ for $1 \le i < N$.
All edges containing vertex $N$ receive weight zero in $w'$, and vertex $N+1$ is the 
new sink. Since $N$ is now a bulk vertex, it may participate in a cut $W$,
but since $\norm{C(W)} = \norm{C(W \cup \{N\})}$, it will never change its total weight.
Hence $(S,w') \in P_{N+1,n}$. 
Since $(S,w)$ defines an extreme ray of $\PNn$ we can
choose $M_{N,n}-1$
inequalities in \eqref{SCnn}
which, when 
satisfied as equations,
have solutions in $P_{N,n}$ of the form $\lambda (S,w)$ with $\lambda \ge 0$.
To these equations we add the $N$ equations $w(i,N)=0$.
The resulting system has solutions in $P_{N+1,n}$ of the form $\lambda (S,w')$ 
with $\lambda \ge 0$,
and the extremality of the ray defined by $(S,w')$ follows.
\end{proof}

Extreme rays can also be preserved under the addition of new terminal vertices as follows. 
Given $(S,w) \in \PNn$, we define its \textit{zero-lift} as the vector 
$(S',w') \in  P_{N+1,n+1}$,
where $S'$ has dimension $2^{n+1}-1$ and $w'$ has dimension $(N+1)N/2$, by
\begin{subequations}
\begin{alignat}{3}
\label{Sext}
S'_{ \{ n+1 \}}&=0, \qquad S'_{I \cup \{ n+1 \} } = S'_I =S_I,
\qquad \es \ne I \subseteq [n], \\
\label{wext}
w'(i,n+1) &= 0, \qquad i\in [n], \qquad w'(i,j)=w(i,j), \qquad 1 \le i < j \le N.
\end{alignat}
\end{subequations}
Similarly, $S'\in H_{N+1,n+1}$ above defines the \textit{zero-lift} of the given $S\in\HNn$.
A vector $x$ that satisfies an inequality $qx \ge 0$ as an equation is called
a \textit{root} of that inequality.

\begin{proposition}\label{raylift}
If $(S,w)$ is an extreme ray of $\PNn$, then its zero-lift $(S',w')$
is an extreme ray of $P_{N+1,n+1}$.
\end{proposition}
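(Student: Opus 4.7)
The plan is to adapt the active-constraint strategy used in the proof of Proposition~\ref{zero}: I would exhibit $M_{N+1,n+1}-1$ inequalities of the defining system \eqref{SCnn} for $P_{N+1,n+1}$ that are simultaneously active at $(S',w')$ and linearly independent.

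First, I would inherit the extremality certificate of $(S,w)$: pick $M_{N,n}-1$ linearly independent inequalities from \eqref{SCnn} for $P_{N,n}$ that are active at $(S,w)$. Each such constraint reads directly as an active constraint of $P_{N+1,n+1}$ at $(S',w')$, because old edge weights and $S$-coordinates are preserved by the lift, and because any cut inequality $S_I\le\|C(W)\|$ whose witnessing $W$ avoids $n+1$ has the same right-hand side in the lifted graph, since by \eqref{wext} the only edges across the cut involving the new terminal carry zero weight.

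Next I would adjoin $2^n+N$ fresh active constraints supplied by the lift: (i) the $N$ nonnegativities $w'(e)=0$ for each new edge $e$ incident to $n+1$; (ii) the cut bound $S'_{\{n+1\}}\le\|C(\{n+1\})\|=0$, tight because $n+1$ is isolated by zero-weight edges; and (iii) for every $\es\ne I\subseteq[n]$, the cut bound $S'_{I\cup\{n+1\}}\le\|C(W_I\cup\{n+1\})\|=S_I$, where $W_I$ is a min-cut realizing $S_I$ in $(K_N,w)$, so that adjoining $n+1$ only contributes zero-weight edges across the cut and preserves its weight. The count matches, since
\begin{equation*}
M_{N+1,n+1}-M_{N,n}=(2^{n+1}-2^n)+((N+1)N/2-N(N-1)/2)=2^n+N,
\end{equation*}
so the combined total is $(M_{N,n}-1)+2^n+N=M_{N+1,n+1}-1$, as required.

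Linear independence follows from a block structure: the $M_{N,n}-1$ inherited constraints involve only old coordinates; each of the $N$ new edge constraints is the unique one in which a specific new $w'$-coordinate appears; and each of the $2^n$ new cut constraints is the unique one in which a specific new $S'_J$-coordinate with $n+1\in J$ appears. A vanishing linear combination therefore forces all new-constraint coefficients to vanish block by block, after which independence of the inherited ones closes the argument. The delicate point, and main obstacle if mishandled, is the tightness assertion in (iii), namely $\|C(W_I\cup\{n+1\})\|_{w'}=S_I$: it rests on the existence of a min-cut $W_I$ actually realizing $S_I$ in $(K_N,w)$, which is the content of Theorem~\ref{thm1}\ref{thm:Sw} for non-trivial extreme rays. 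Everything else is routine combinatorial bookkeeping.
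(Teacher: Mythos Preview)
Your approach is the paper's: inherit $M_{N,n}-1$ tight constraints, adjoin the $N$ new zero-edge equalities, the cut bound for $S'_{\{n+1\}}$, and the $2^n-1$ cut bounds for $S'_{I\cup\{n+1\}}$ obtained by adjoining $n+1$ to a cut realizing $S_I$; your explicit appeal to Theorem~\ref{thm1}\ref{thm:Sw} for the existence of such realizing cuts is a point the paper leaves implicit.

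There is one slip in the independence argument. An inherited cut inequality $S'_I\le\|C(W)\|$, read as a constraint of $P_{N+1,n+1}$, has $\|C(W)\|$ computed in $K_{N+1}$ and therefore \emph{does} involve the new edge variables $w'(i,n+1)$ for $i\in W$; hence your claims that the inherited constraints touch only old coordinates, and that each new edge constraint is the unique one carrying a given new $w'$-coordinate, are both false as stated. The repair is routine and preserves your plan: first use the new $S'_J$-columns (where only the $2^n$ new cut rows are nonzero) to eliminate those rows; then project the remaining rows to the old coordinates, where the inherited rows recover exactly the original independent $P_{N,n}$ system while the new edge rows vanish, forcing the inherited coefficients to zero; finally the new edge rows are unit vectors in the new $w'$-block.
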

\begin{proof}
Let $(S,w)$ define an extreme ray of $\PNn$. For each $\es \ne I \subseteq [n]$ choose
an inequality from \eqref{SC} for which it is a root. These
are linearly independent inequalities since the $S_I$ coordinates define minus the identity matrix.
To these inequalities add all those
from \eqref{nn} for which $(S,w)$ is a root. Since $(S,w)$ is an extreme ray
we have a linearly independent set of $2^n + N(N-1)/2 -2$ such inequalities. 
Call this system $L$.
We now add terminal $n+1$ and define $S'$ and $w'$ as above. 
It is easy to verify that $(S',w') \in P_{N+1,n+1}$. Note $P_{N+1,n+1}$
has $2^n + N$ more dimensions than $\PNn$ and we will augment $L$ by this many
linearly independent inequalities. Firstly, for each $\es \ne I \subseteq [n]$ the 
inequality previously
chosen will also be satisfied as an equation when $I$ is replaced by $I \cup \{n+1\}$.
This gives an additional $2^n -1$ inequalities which are linearly independent from the
others in $L$ since the new variables $S_{I \cup \{n+1\}}$ again form minus the 
identity matrix.
To these we add $N$ equations $w(i,n+1)=0$ for $1 \le i \le n$ and the equation 
$S_{\{n+1\}} = \norm{C(\{n+1\})}$.
This gives $L$ the required number of tight linearly independent constraints.
By construction, their solution is $\lambda (S',w')$ with $\lambda \ge 0$, 
which proves that $(S',w')$ defines an extreme ray of $P_{N+1,n+1}$.
\end{proof}

The following result, though not unexpected, is proved here for the first time:
\begin{proposition}\label{hlift}
If $S$ is an extreme ray of $H_n$, then its zero-lift $S'$
is an extreme ray of $H_{n+1}$. Hence $H_n$ is a projection of $H_{n+1}$.
\end{proposition}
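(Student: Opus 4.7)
The plan is to reduce the extremality of the zero-lift $S'$ in $H_{n+1}$ to that of $S$ in $H_n$ via a natural restriction map. Observe first that $S'_{\{n+1\}} = 0$ by construction; since every vector in $H_{n+1}$ has nonnegative entries (as min-cut weights in nonnegatively-weighted graphs), any conical decomposition $S' = T + U$ with $T, U \in H_{n+1}$ must satisfy $T_{\{n+1\}} = U_{\{n+1\}} = 0$. It then suffices to show that any such $T$ is a nonnegative multiple of $S'$.

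The key intermediate claim is that any $T \in H_{n+1}$ with $T_{\{n+1\}} = 0$ satisfies $T_{I \cup \{n+1\}} = T_I$ for every nonempty $I \subseteq [n]$, and that its coordinate restriction $\pi(T) := (T_I)_{\es \ne I \subseteq [n]}$ lies in $H_n$. Both would follow from a graph-theoretic observation: in a realization of $T$ as a weighted $K_M$, the condition $T_{\{n+1\}} = 0$ produces a zero-weight cut isolating $\{n+1\}$ from $[n] \cup \{M\}$, so the graph decomposes into two vertex-disjoint components joined only by zero-weight edges. The component containing $[n]$ and the sink is itself a weighted complete graph that realizes $\pi(T)$ as an $H_n$ element, while the inclusion of any vertices from the $\{n+1\}$-component in a cut contributes nothing, yielding $T_{I \cup \{n+1\}} = T_I$.

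With $\pi(T), \pi(U) \in H_n$ and $\pi(T) + \pi(U) = \pi(S') = S$, extremality of $S$ in $H_n$ forces $\pi(T) = \lambda S$ for some $\lambda \in [0, 1]$. This determines $T$ completely: $T_I = \lambda S_I = \lambda S'_I$ for $I \subseteq [n]$, $T_{I \cup \{n+1\}} = T_I = \lambda S'_{I \cup \{n+1\}}$, and $T_{\{n+1\}} = 0 = \lambda S'_{\{n+1\}}$, giving $T = \lambda S'$ and proving extremality of $S'$.

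For the final assertion that $H_n$ is a projection of $H_{n+1}$, the same coordinate-forgetting linear map $\pi: \RR^{2^{n+1}-1} \to \RR^{2^n-1}$ sends $H_{n+1}$ onto $H_n$: surjectivity is immediate from the zero-lift, while $\pi(T) \in H_n$ for arbitrary $T \in H_{n+1}$ is shown by adding to a realization of $T$ a sufficiently heavy edge between $n+1$ and the sink, which forces $n+1$ onto the sink side of every min-cut for $I \subseteq [n]$ and thereby preserves the values $T_I$ when $n+1$ is reinterpreted as a bulk vertex. The hardest part is establishing the intermediate equality $T_{I \cup \{n+1\}} = T_I$ under $T_{\{n+1\}} = 0$, as it is what makes the restriction map compatible with the zero-lift structure and hence what couples extremality in $H_{n+1}$ to extremality in $H_n$.
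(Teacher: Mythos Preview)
Your argument is correct and takes a genuinely different route from the paper's proof. The paper establishes extremality of $S'$ by exhibiting $2^{n+1}-2$ linearly independent valid inequalities for $H_{n+1}$ that $S'$ satisfies as equations: the $2^n-2$ facet inequalities of $H_n$ on which $S$ lies (viewed as inequalities on the $S'_I$, $I\subseteq[n]$), the $2^n-1$ subadditivity instances $S'_I + S'_{\{n+1\}} \ge S'_{I\cup\{n+1\}}$, and nonnegativity $S'_{\{n+1\}}\ge 0$. Proposition~\ref{H-V}\ref{prop:V} then finishes the job. Your approach instead works directly with the indecomposability characterisation of extreme rays, using the restriction map $\pi$ together with the structural lemma that $T_{\{n+1\}}=0$ forces $T$ to be the zero-lift of $\pi(T)\in H_n$.

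The two approaches trade off as follows. The paper's proof is shorter and fits the polyhedral toolkit developed in Section~\ref{ssec:reps}, but it leans on the existence of a full set of facet inequalities for $H_n$ and on a dimension count whose independence claim must be checked. Your proof is more self-contained and conceptual: the key lemma (which, incidentally, also follows inequality-theoretically from $|T_I - T_{I\cup\{n+1\}}| \le T_{\{n+1\}}$, i.e.\ SA plus Araki--Lieb) shows that the face $\{T_{\{n+1\}}=0\}$ of $H_{n+1}$ is precisely the image of $H_n$ under zero-lift, which is a cleaner statement than mere extremality preservation. Your treatment of the projection claim (adding a heavy $(n{+}1,\text{sink})$ edge to demote $n{+}1$ to a bulk vertex without altering the $T_I$ for $I\subseteq[n]$) is also more explicit than the paper's one-word ``Hence''.
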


\begin{proof}
If $S$ is an extreme ray of $H_n$, then it is a root of a linearly independent set of
$2^n-2$ of its facet inequalities. 
Clearly $S'$ is also a root of these inequalities.
Additionally, $S'$ is a root of the following $2^n-1$ instances of SA,
\begin{equation}
\label{eqn:hlift}
    S_I' + S_{\{n+1\}}' \ge S_{I \cup \{n+1\}}',
\end{equation}
for all $\es\ne I\subseteq[n]$, and also obeys $S_{\{n+1\}}'=0$.
This gives $2^n$ more equations which are linearly independent
and are also independent of the former $2^n-2$ because they independently
involve the new, distinct variables $S_{I \cup \{n+1\}}'$ for every $I\subseteq[n]$.
Since $S'$ satisfies $2^{n+1}-2$ independent valid inequalities as equations,
it is an extreme ray of $H_{n+1}$ by Proposition \ref{H-V}\ref{prop:V}.
\end{proof}

To illustrate the use of the results in this subsection we consider the case $n=4$.
If we compute $P_{6,4}$ and remove the trivial rays 
we have $35$ extreme rays. When we project onto the $15$ coordinates $S_I$
and remove redundancy, there remain $20$ extreme rays. 
Only $5$ of these are new, whereas the other $15$ come 
from zero-lifts of the two extreme-ray classes that define $H_3$. 
The convex hull of this set
is bounded by $20$ facets that are examples
of what is called zero-lifting from the two facet classes for $H_3$.
We will define this process and prove that it preserves
validity and facets in Section \ref{flift}.
By this result and Proposition \ref{H-V}\ref{prop:HV}
we have obtained the $H$- and $V$-representations of $H_4$.

\section{Valid inequalities and facets}
\label{facets}

As remarked in Section \ref{defs}, there is no general explicit $H$-representation 
known for $H_n$, although it can in principle be computed by using the method of 
Corollary \ref{cor:V-des} and then converting the resulting 
$V$-representation into an 
$H$-representation. 
In Section \ref{ilp} we give an integer linear program (ILP) for testing whether an inequality
is valid over $H_n$ or not.
However, to prove validity would require solving an ILP whose size depends on $m(n)$ 
and so is impractical with current bounds.
The main result of this section is a tractable method known as 
\textit{proof by contraction} to prove inequalities valid for $H_n$.
By exhibiting the required number of extreme rays it is then possible
to prove they are facets.

A general inequality $q S \ge 0$ over $H_n$ is specified by a vector $q\in\RR^{{2^n-1}}$.
Let $K\subseteq[n]$ be the subset of terminals appearing in it, i.e. $i\in K$ if and only if 
$q_I \ne 0$ for some $I\ni i$.
If $\abs{K}<n$, one can turn it into an inequality over $H_{\abs{K}}$ by relabelling terminals 
$K\to \left[\,\abs{K}\right]$, if necessary. We say that an inequality over $H_n$ is in 
\textit{canonical form} if $\abs{K}=n$, and write it canonically as
\begin{equation}\label{eq:ineq}
    \sum_{l=1}^{L} \alpha_l S_{I_l} \ge \sum_{r=1}^{R} \beta_r S_{J_r},
\end{equation}
where $L$ and $R$ are respectively the number of positive and negative entries in the 
vector $q$, and for all $l\in[L]$ and $r\in[R]$, the coefficients $\alpha_l,\beta_r>0$ 
and the sets $I_l,J_r\subseteq[n]$ are distinct and non-empty. Because $H_n$ is a rational 
cone, the normalization of \eqref{eq:ineq} of any inequality of interest is always set such that all 
coefficients $\alpha_l,\beta_r>0$ are together coprime integers.

At the end of Section \ref{ssec:reps} we discussed the cases $n=1,2$. 
Recall that $H_1$ is $1$-dimensional and corresponds to a nonnegative
half-line. Its only facet is $S_1 \ge 0$, which trivially follows from nonnegativity 
of the weights in \eqref{HNndef}.
For $n=2$, we saw that the resulting $3$-dimensional cone $H_2$ is a simplex
bounded by the $3$ facets in the symmetry orbit of the SA inequality \eqref{eq:SA}.
We discussed $H_3$ at the end of Sections \ref{rlift} and \ref{zerolift}. Apart from 
the SA orbit, containing $6$ facet inequalities, an additional inequality was discovered 
to bound $H_3$ \cite{Hayden:2011ag}:
\begin{equation}\label{eq:mmi}
    S_{12} + S_{13} + S_{23} \ge S_{1} + S_{2} + S_{3} + S_{123}.
\end{equation}
This is known in physics as the \textit{monogamy of mutual information} (MMI) due to 
its rewriting using \eqref{eq:mi} as $\mi{1:23}S \ge \mi{1:2}S + \mi{1:3}S$. 
Since one can write submodularity as $\mi{1:23}S \ge \mi{1:2}S$, by nonnegativity 
of $\mi{1:3}S\ge 0$ one sees that MMI is a strictly stronger inequality.
The proof of the validity of \eqref{eq:mmi} will be presented in 
Section \ref{sec:contract} (see Table \ref{tab:mmi}) as 
an example of a general combinatorial proof method for valid inequalities of $H_n$.
This will show that $H_3$ has $7$ facets and so is also a simplex.
As we saw at the end of Section \ref{zerolift}, no new inequalities arise for $n=4$. 
However, for $n\ge3$ note that MMI acquires a more general form which we show
is valid for $H_n$: for disjoint non-empty 
subsets $I,J,K \subseteq [n]$,
\begin{equation}\label{eq:mmig}
    \text{MMI:} \qquad S_{IJ} + S_{IK} + S_{JK} \ge S_{I} + S_{J} + S_{K} + S_{IJK},
\end{equation}
which is valid for $H_n$, as we show in Section \ref{flift}.

\subsection{Proof by contraction}
\label{sec:contract}

We now describe the proof-by-contraction method for proving validity of inequalities for $H_n$. 
Although our description is complete,
we refer the reader to \cite{Bao:2015bfa} and \cite{Bao:2020zgx} for 
more details on its derivation. By making use of antichains, as in 
Corollary \ref{cor:bound}, we obtain a stronger result than previous ones.
Our discussion will be exemplified with MMI as given in \eqref{eq:mmi}.

To set the stage, let $Q_m=\{0,1\}^m\subset\RR^m$ denote the (vertices of) the unit 
$m$-cube and refer to $x\in Q_m$ as a \textit{bitstring}. At times, it will also be 
useful to think of $Q_m$ as an $m$-ary Boolean domain. Given some vector of positive 
entries $\gamma\in\RR_+^m$, we can turn $Q_m$ into a metric space with distance function
$d_\gamma$ by endowing it with a weighted Hamming norm $\norm{\,\cdot\,}_{\gamma}$ via
\begin{equation}\label{eq:df}
    d_\gamma(x,x') = \norm{x-x'}_\gamma, 
    \qquad \norm{x}_\gamma = \sum_{k=1}^m \gamma_k \abs{x_k}.
\end{equation}

Consider now a candidate inequality in canonical form over $H_n$, written as in \eqref{eq:ineq}.
Encode each side of it into $n+1$ \textit{occurrence vectors}, $x^{(i)}\in Q_L$ and 
$y^{(i)}\in Q_R$ for $i\in\extn$, with entries
\begin{equation}\label{eq:occur}
    x^{(i)}_l = \delta(i \in I_l), \qquad y^{(i)}_r = \delta(i \in J_r),
\end{equation}
where $\delta$ is a Boolean indicator function, i.e. it yields $1$ or $0$ depending 
on whether its argument is true or false, respectively. Clearly, the occurrence vectors 
for $i=N\notin[n]$ are all-$0$ vectors. For inequality \eqref{eq:mmi}, 
the $i\in[n]$ occurrence vectors are the following bitstrings:
\begin{equation}\label{eq:mmioc}
\begin{aligned}
    x^{(1)} &= (1,1,0), \qquad y^{(1)} = &(1,0,0,1), \\
    x^{(2)} &= (1,0,1), \qquad y^{(2)} = &(0,1,0,1), \\
    x^{(3)} &= (0,1,1), \qquad y^{(3)} = &(0,0,1,1).
\end{aligned}
\end{equation}
Bitstrings are a bookkeeping device for partitioning the vertex set $[N]$ of $K_N$ 
into specific disjoint subsets suitable for studying a given candidate inequality.
In particular, consider the minimal min-cuts $W_{I_l}$ for $l\in[L]$ associated to every term 
on the left-hand side of \eqref{eq:ineq}. Each of the $2^{L}$ different bitstrings 
$x\in Q_{L}$ indexes a disjoint vertex subset $W(x)\subseteq [N]$ defined by
\begin{equation}\label{eq:Wx}
    W(x) = \bigcap_{l=1}^L W_{I_l}^{x_{l}}, \qquad W^{b} =
    \begin{cases}
        W \qquad &\text{if \;$b=1$}, \\
        W^\comp \qquad &\text{if \;$b=0$}.
    \end{cases}
\end{equation}
The attentive reader will notice that these $W(x)$ sets would be precisely the atoms introduced 
in \eqref{Wpart} when proving Corollary \ref{cor:bound} if one were to iterate the intersection 
over all possible $2^n-1$ non-empty subsets of $[n]$. In the current discussion, one need only 
consider the pertinent $L$ subsets $I_l\subseteq[n]$ involved in the left-hand side 
of \eqref{eq:ineq}. The resulting $W(x)$ sets are again disjoint by construction, i.e. 
$W(x)\cap W(x') = \es$ unless $x=x'$. The converse is certainly not true though, as expected 
from Corollary \ref{cor:bound}. In particular, $W(x)$ will be empty whenever the family of sets 
$\mI(x) = \{I_l\subseteq [n] \;:\; x_l = 1\}$ is not a pairwise intersecting upper set in 
$\mI_L = \{I_l\subseteq [n] \;:\; l\in[L]\}$. The relation between this 
statement and the one in Corollary \ref{cor:bound} that refers to upper sets in $Bool_n$ is 
better understood in terms of their associated antichains. Namely, $W(x)$ will be empty whenever 
the minimal elements in $\mI(x)$ are not a pairwise intersecting antichain in $\mI_L$, which holds 
if and only if the same is true in $Bool_n$. In other words, there is a non-trivial $W(x)$ 
precisely for every pairwise intersecting antichain in $Bool_n$ that is also so in $\mI_L$, 
and thus the number of relevant bitstrings will be considerably smaller than $M(n)$.

The discussion above motivates introducing the subset $A_n(\mI_L) \subseteq Q_{2^n-1}$ of all 
bitstrings $x\in A_n(\mI_L)$ such that $\mI(x)$ is a pairwise intersecting antichain in $\mI_L$.
Crucially, these suffice to characterize the minimal min-cuts for $I_L\in\mI_L$ in any $K_N$, 
in the sense that these are all reconstructible via
\begin{equation}\label{eq:WIWx}
    W_{I_l} = \bigcup_{x:x_l = 1} W(x),
\end{equation}
where the union here, and in all that follows next, runs over all bitstrings $x\in A_n(\mI_L)$ 
subject to the given conditions. Furthermore, one can use these vertex sets $W(x)$ to construct 
(not necessarily minimum) cuts for any subset of terminals $J\subseteq \bigcup_{l=1}^L I_l \subseteq[n]$. 
To see this, let $i\in[n]$ be any one of the terminals involved in the subsets $I_l\in\mI_L$. We want to 
find which of the $W(x)$ sets the vertex $i$ lands on. Since $i\in W_{I_l}$ if and only if $i\in I_l$ by 
the definition of a cut for $I$, it follows that $i\in W(x)$ if and only if $x_l=1$ precisely when $I_l\ni i$
and $x_l=0$ otherwise. In other words, the bitstring we are after is precisely the occurrence vector 
$x^{(i)}\in Q_L$ defined in \eqref{eq:occur}, and we thus have $i\in W(x^{(i)})$.

\begin{lemma}\label{lem:cut}
    Let $f : Q_m \to \{0,1\}$ be an $m$-ary Boolean function. Given some collection of 
    terminal subsets $\mI_L\ni I_l$ and the partitioning of $[N]$ defined in \eqref{eq:Wx}, 
    construct the vertex set
    \begin{equation}
        U^f = \bigcup_{x:f(x)=1} W(x).
    \end{equation}
    Then, for any subset of terminals $J\subseteq \bigcup_{l=1}^L I_l \subseteq [n]$, we have
    \begin{equation}
        U^f\cap [n] = J \qquad\iff\qquad f(x^{(i)})=\delta(i\in J), \quad \forall~i\in[n].
    \end{equation}
\end{lemma}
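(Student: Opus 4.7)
The plan is to exploit the partition of $[N]$ induced by the sets $\{W(x)\}_{x\in Q_L}$ together with the already-noted fact that each terminal $i\in[n]$ lies in the unique block indexed by its occurrence vector $x^{(i)}$.

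First, I would recall from \eqref{eq:Wx} that the $W(x)$ are pairwise disjoint and that their union is all of $[N]$ (some blocks may be empty, but the non-empty ones form a partition). For each $i\in[n]$, I would pin down which block contains it: since $W_{I_l}$ is a min-cut for $I_l$, we have $i\in W_{I_l}$ if and only if $i\in I_l$, so the unique bitstring $x\in Q_L$ with $i\in W(x)$ is precisely the one with $x_l=1$ iff $I_l\ni i$, i.e.\ the occurrence vector $x^{(i)}$ from \eqref{eq:occur}. This covers also the case $i\in[n]\sminus\bigcup_l I_l$, where $x^{(i)}=(0,\dots,0)$ and $i$ lies in $W(0,\dots,0)=\bigcap_l W_{I_l}^\comp$.

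Next, using the disjointness of the blocks, a terminal $i\in[n]$ belongs to $U^f=\bigcup_{x:f(x)=1}W(x)$ if and only if the unique bitstring $x$ with $i\in W(x)$ satisfies $f(x)=1$; by the previous paragraph this is the same as $f(x^{(i)})=1$. Therefore
\begin{equation*}
    U^f\cap[n] \;=\; \{\,i\in[n] \;:\; f(x^{(i)})=1\,\}.
\end{equation*}
This set equals $J$ exactly when, for every $i\in[n]$, the value $f(x^{(i)})\in\{0,1\}$ agrees with the indicator $\delta(i\in J)$, which is the stated biconditional.

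I do not expect a real obstacle here beyond bookkeeping: all the ingredients are already in the narrative preceding the lemma. The only point requiring a little care is terminals $i\notin\bigcup_l I_l$, where one must check that $x^{(i)}$ is the zero bitstring and that the hypothesis $J\subseteq\bigcup_l I_l$ forces $\delta(i\in J)=0$, so the equivalence remains consistent with $f$ evaluated at the zero bitstring.
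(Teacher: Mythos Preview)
Your proposal is correct and follows essentially the same argument as the paper: both use that the $W(x)$ are disjoint and that each terminal $i\in[n]$ lies in the unique block $W(x^{(i)})$, so $i\in U^f$ iff $f(x^{(i)})=1$, from which the biconditional is immediate. Your extra care with terminals $i\notin\bigcup_l I_l$ is a nice explicit check that the paper leaves implicit.
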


\begin{proof}
    A trivial rephrasing of $U^f\cap[n] = J$ is that, for $i\in[n]$, one has $i\in J$ if and only if 
    $i \in U^f$. Since $i\in W(x^{(i)})$ for every $i\in[n]$ and all $W(x)$ are disjoint, it 
    follows that $i\in U^f$ if and only if $W(x^{(i)}) \subseteq U^f$. Finally, since by construction 
    $W(x^{(i)})\subseteq U^f$ if and only if $f(x^{(i)})=1$, the desired result is obtained.
\end{proof}

The min-cut edges $C(W_I)$ can also be conveniently organized in terms of bitstrings via
\begin{equation}\label{eq:edgexx}
    E(x,x') = \{ (i,j)\in E_N \;:\; i\in W(x) \text{ and } j\in W(x') \}.
\end{equation}
Because the $W(x)$ vertex sets are disjoint, so are the $E(x,x')$ edge sets for any distinct pair of 
bitstrings $x,x'\in Q_L$. This leads to the following useful result for $C(W_I)$:

\begin{lemma}\label{lem:edges}
    The edges of a min-cut $W_{I_l}$ for some $I_l \in \mI_L$ and their total weight are, respectively,
    \begin{equation}\label{eq:mcdecom}
        C(W_{I_l}) = \bigcup_{x, x' : x_l \ne x_l'} E(x, x'), \qquad
        \norm{C(W_{I_l})} = \sum_{x, x'} \abs{x_l - x_l'} \abs{E(x, x')},
    \end{equation}
    where the index sets are unordered pairs of bitstrings $x,x'\in A_n(\mathcal{I}_L)$.
\end{lemma}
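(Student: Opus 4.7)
My plan is to exploit the fact that the vertex sets $\{W(x) : x \in A_n(\mI_L)\}$ form a partition of $[N]$. By \eqref{eq:Wx}, distinct bitstrings yield disjoint atoms, and the discussion preceding the lemma already establishes that $W(x)=\es$ whenever $x \notin A_n(\mI_L)$, since in that case $\mI(x)$ fails to be a pairwise intersecting upper set and the intersection defining $W(x)$ must be empty by Theorem \ref{basic}\ref{thm:dist} and \ref{thm:nest}. So every vertex of $K_N$ sits in exactly one non-empty atom, indexed by some $x \in A_n(\mI_L)$.

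Given this partition, every edge $(i,j)\in E_N$ lands in a unique $E(x,x')$ for some unordered pair $x,x'\in A_n(\mI_L)$, determined by the unique atoms $W(x)\ni i$ and $W(x')\ni j$. Using \eqref{eq:WIWx}, endpoint $i$ lies in $W_{I_l}$ iff $x_l=1$, and similarly $j\in W_{I_l}$ iff $x'_l=1$. The edge therefore belongs to $C(W_{I_l})$ iff exactly one of $x_l, x'_l$ equals $1$, i.e.\ iff $x_l \ne x'_l$. This gives the first equality in \eqref{eq:mcdecom}, as the union on the right is disjoint and exhausts precisely the cut edges.

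For the weight formula I would take $\norm{\,\cdot\,}$ on both sides; since the $E(x,x')$ are pairwise disjoint as edge sets, the weight distributes over the disjoint union, giving $\norm{C(W_{I_l})} = \sum_{x,x' : x_l \ne x_l'} \abs{E(x,x')}$ summed over unordered pairs. Observing that $\abs{x_l - x_l'}\in\{0,1\}$ is precisely the indicator of $x_l \ne x_l'$ then lets one trivially extend the range to all unordered pairs $x,x'\in A_n(\mI_L)$ by inserting this factor, which absorbs the $x=x'$ diagonal at no cost and yields the stated expression. The only delicate step is the first one, namely the reliance on the antichain analysis from Corollary \ref{cor:bound} to conclude that no contributing atoms lie outside $A_n(\mI_L)$; everything past that is routine bookkeeping with disjoint unions.
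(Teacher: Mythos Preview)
Your proposal is correct and follows essentially the same approach as the paper: both arguments use the partition of $[N]$ into the atoms $W(x)$, invoke \eqref{eq:WIWx} to identify membership of a vertex in $W_{I_l}$ via the $l^{\text{th}}$ bit of its atom's label, and then read off the cut edges and their total weight from the disjointness of the sets $E(x,x')$. You are slightly more explicit than the paper about why the atoms indexed by $A_n(\mI_L)$ already cover all of $[N]$ and about the indicator role of $\abs{x_l - x_l'}$, but these are elaborations of the same proof rather than a different route.
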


\begin{proof}
    By definition, an edge $(i,j)\in C(W_{I_l})$ if and only if $i\in W_{I_l}$ and $j\in W_{I_l}^\comp$. 
    Using \eqref{eq:WIWx}, one can write $W_{I_l} = \bigcup_{x:x_l=1} W(x)$ and, similarly, 
    $W_{I_l}^\comp = \bigcup_{x:x_l=0} W(x)$. Hence $(i,j)\in E(x,x')$ is contained 
    in $C(W_{I_l})$ if and only if $x$ and $x'$ differ in their $l^{\text{th}}$ bit $x_l\ne x_l'$. 
    It follows that $C(W_{I_l})$ can be constructed by joining all edge sets $E(x,x')$ with 
    bitstrings $x, x'\in A_n(\mathcal{I}_L)$ such that $x_l \ne x_l'$, thereby proving the first 
    equation in \eqref{eq:mcdecom}. 
    Furthermore, since all $E(x,x')$ are disjoint for distinct pairs of bitstrings, the total weight of 
    their union reduces to the sum over the total weights of every $E(x,x')$ involved, which is 
    precisely what the second equation computes.
\end{proof}

Given two metric spaces $(M,d)$ and $(M',d')$, we call $f : M \to M'$ a 
\textit{$d$-$d'$ contraction map} if 
\begin{equation}\label{eq:cprop}
    d'(f(x),f(y)) ~\le~ d(x,y), \qquad \forall~x,y\in M.
\end{equation}
The general proof method can now be stated:
\begin{theorem}\label{thm:contraction}
    Inequality \eqref{eq:ineq} is valid for $H_n$ if there exists a $d_\alpha$-$d_\beta$ contraction map
    \begin{equation}\label{eq:contra}
        f: A_n(\mI_L) \to Q_R,
    \end{equation}
    satisfying $f(x^{(i)}) = y^{(i)}$ for all $i\in\extn$.
\end{theorem}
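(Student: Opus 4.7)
The plan is to take an arbitrary $S\in H_n$ with a nonnegative-weighted realization in some $K_N$ and show that \eqref{eq:ineq} holds for this $S$. First I would fix the unique minimal min-cut $W_{I_l}$ for each $I_l\in\mI_L$ (guaranteed by Theorem \ref{basic}\ref{thm:unique}), partition $[N]$ into the atoms $W(x)$ of \eqref{eq:Wx}, and note that, as shown in the proof of Corollary \ref{cor:bound}, atoms with $x\notin A_n(\mI_L)$ are empty; consequently only edge blocks $E(x,x')$ with $x,x'\in A_n(\mI_L)$ contribute to any cut weight. Applying Lemma \ref{lem:edges} to each left-hand side summand and swapping sums gives
\begin{equation*}
    \sum_{l=1}^L \alpha_l\, S_{I_l} = \sum_{\{x,x'\}} |E(x,x')|\sum_{l=1}^L \alpha_l\,|x_l - x_l'| = \sum_{\{x,x'\}} |E(x,x')|\, d_\alpha(x,x'),
\end{equation*}
with the outer sums running over unordered pairs $\{x,x'\}\subseteq A_n(\mI_L)$.

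Next I would use the contraction map $f$ to manufacture, for each $r\in[R]$, a cut that majorizes $S_{J_r}$. Writing $f_r(x)$ for the $r$-th coordinate of $f(x)$, set
\begin{equation*}
    U_r = \bigcup_{x\in A_n(\mI_L)\,:\, f_r(x)=1} W(x).
\end{equation*}
The hypothesis $f(x^{(i)})=y^{(i)}$ together with the definition of occurrence vectors in \eqref{eq:occur} yields $f_r(x^{(i)}) = y_r^{(i)} = \delta(i\in J_r)$ for every $i\in[n]$, so Lemma \ref{lem:cut} applied to $f_r$ gives $U_r\cap[n] = J_r$. Thus $U_r$ is a legitimate cut separating $J_r$ from $\extn\sminus J_r$ and minimality of $S_{J_r}$ gives $S_{J_r}\le \|C(U_r)\|$. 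Repeating the edge-decomposition argument of Lemma \ref{lem:edges} with the Boolean-valued $f_r$ playing the role of $x\mapsto x_l$ produces $\|C(U_r)\| = \sum_{\{x,x'\}} |f_r(x)-f_r(x')|\,|E(x,x')|$, so after multiplying by $\beta_r$ and summing in $r$,
\begin{equation*}
    \sum_{r=1}^R \beta_r\, S_{J_r} \le \sum_{\{x,x'\}} |E(x,x')|\, d_\beta(f(x),f(x')).
\end{equation*}

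The proof would be closed by invoking the contraction hypothesis $d_\beta(f(x),f(x'))\le d_\alpha(x,x')$ termwise, which is legitimate because each $|E(x,x')|\ge 0$, and combining with the first display to obtain $\sum_r \beta_r S_{J_r} \le \sum_l \alpha_l S_{I_l}$. The main technical subtlety I expect to manage is the self-consistency of the boundary data imposed on $f$: terminals $i,j$ with coincident occurrence vectors $x^{(i)}=x^{(j)}$ must have $y^{(i)}=y^{(j)}$ (in particular $f(0)=0$, since the sink satisfies $x^{(N)}=0=y^{(N)}$), which is automatic for any inequality genuinely in canonical form. The restriction of the domain of $f$ to $A_n(\mI_L)$ rather than to all of $Q_L$, justified by the vanishing of $W(x)$ outside $A_n(\mI_L)$ via Corollary \ref{cor:bound}, is precisely what sharpens the theorem over previous versions of the contraction method and is the place where care must be taken when applying Lemma \ref{lem:cut}.
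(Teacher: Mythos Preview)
Your proposal is correct and follows essentially the same approach as the paper's own proof: build cuts $U_r$ on the right-hand side from the image bitstrings $f(x)$, apply Lemma~\ref{lem:cut} to verify $U_r\cap[n]=J_r$, decompose both sides via Lemma~\ref{lem:edges} into sums of $|E(x,x')|$ weighted by Hamming-type distances, and then invoke the contraction property termwise. Your remarks on the consistency of the boundary data and on the restriction of the domain to $A_n(\mI_L)$ are helpful annotations of points the paper leaves implicit; note only that the consistency $x^{(i)}=x^{(j)}\Rightarrow y^{(i)}=y^{(j)}$ is guaranteed by the very existence of $f$ satisfying the boundary conditions, not by canonical form alone.
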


\begin{proof}
	Associate a cut $U_{J_r}$ to each subsystem $J_r$ that appears on the right-hand side 
	of \eqref{eq:ineq} by using the map $f$ to pick which sets $W(x)$ to include in the 
	definition of $U_{J_r}$ as follows:
	\begin{equation}
	U_{J_r} = \bigcup_{x:f(x)_r = 1} W(x).
	\end{equation}
	That this indeed obeys the cut condition $U_{J_r} \cap [n] = J_r$ is guaranteed by Lemma
	\ref{lem:cut} and the fact that $f$ is required to respect occurrence vectors, i.e.
	$f(x^{(i)}) = y^{(i)}$ for every $i\in\extn$.
	\begin{equation}
    	\sum_{l=1}^{L} \alpha_l {S}_{I_l}
    	= \sum_{l=1}^{L} \alpha_l \, \norm{C(W_{I_l})}
    	= \sum_{x, x'} \abs{E(x, x')} \, \sum_{l=1}^L \alpha_l \abs{x_{l} - x'_{l}}
    	= \sum_{x, x'} \abs{E(x, x')} \, d_{\alpha}(x,x').
	\end{equation}
	Similarly, for the $U_{J_r}$ cuts one has
	\begin{equation}
	\sum_{r=1}^{R} \beta_r \norm{C(U_{J_r})} = \sum_{x,x'} \abs{E(x, x')} \, d_\beta(f(x),f(x')).
	\end{equation}
	Therefore, by hypothesis, the contraction property of $f$ implies
	\begin{equation}\label{eq:cont}
	    \sum_{l=1}^L \alpha_l \norm{C(W_{I_l})} ~\ge~ \sum_{r=1}^R \beta_r \norm{C(U_{J_r})}.
	\end{equation}
	Because every set $U_{J_r}$ is a cut for each $J_r$ appearing on the right-hand 
	side of \eqref{eq:ineq}, by minimality $\norm{C(U(J_r)} \ge S(J_r)$ for every 
	$r\in[R]$. Hence the right-hand side of \eqref{eq:cont} is no smaller than that of \eqref{eq:ineq}.
	Finally, since their respective left-hand sides are equal, validity of \eqref{eq:ineq} follows.
\end{proof}

This theorem was proved in \cite{Bao:2015bfa} (Theorem $8$) in a somewhat weaker form.
Whereas the domain of their contraction map is 
$Q_L$, enumerating all subsets of $\mI_L\subseteq Bool_n$, 
we reduce this to $A_n(\mI_L)\subseteq Q_L$, enumerating only 
the pairwise intersecting antichains in 
$\mI_L$ (cf. Corollary \ref{cor:bound}). This leads to a reduction
of the worst-case complexity of the search space.

As an example, a contraction map which proves validity of \eqref{eq:mmi} is shown in Table 
\ref{tab:mmi}.\footnote{In fact, this contraction map which proves \eqref{eq:mmi} is unique. 
This is generically not the case for larger-$n$ facets, for which there usually exists 
many contraction maps compatible with the requirements of Theorem \ref{thm:contraction}.}
One easily checks that occurrence 
vectors are respected, e.g. for $3\in[n]$ we have $(0,1,1)\mapsto (0,0,1,1)$, which matches 
\eqref{eq:mmioc}. Iterating through every pair of rows, one can also check that the contraction 
property holds. Here the vectors defining the distance function are $\alpha = (1,1,1)$ and 
$\beta=(1,1,1,1)$  for left and right, respectively. For instance, occurrence 
vectors $1$ and $3$ in the domain give $d_\alpha(x^{(1)},x^{(3)}) = 1 + 0 + 1 = 2$, while 
their images
$d_\beta(f(x^{(1)}),f(x^{(3)})) = 1 + 0 + 1 + 0 = 2 \le d_\alpha(x^{(1)},x^{(3)})$. 
Notice that the map $f$ need not be injective nor surjective.

\begin{table}[ht]
\setlength{\tabcolsep}{.14cm}\renewcommand{\arraystretch}{.97}
	\centering
	\begin{tabular}{c || c | c | c||c | c | c | c}
		& $S_{12}$ & $S_{13}$ & $S_{23}$ & $S_{1}$ & $S_{2}$ & $S_{3}$ & $S_{123}$ \\ \hline
		$0$ & 0 & 0 & 0 & 0 & 0 & 0 & 0 \\
		 ~  & 0 & 0 & 1 & 0 & 0 & 0 & 1 \\
		 ~  & 0 & 1 & 0 & 0 & 0 & 0 & 1 \\
		$3$ & 0 & 1 & 1 & 0 & 0 & 1 & 1 \\
		 ~  & 1 & 0 & 0 & 0 & 0 & 0 & 1 \\
		$2$ & 1 & 0 & 1 & 0 & 1 & 0 & 1 \\
		$1$ & 1 & 1 & 0 & 1 & 0 & 0 & 1 \\
		 ~  & 1 & 1 & 1 & 0 & 0 & 0 & 1 \\
	\end{tabular}
	\caption{
	Representation of the contraction map which proves the MMI inequality \eqref{eq:mmi}. 
	The left-most column labels the occurrence 
    vectors shown in \eqref{eq:mmioc}, including the one for $N\sim0$. The top row labels 
    bitstring entries, separating domain (left) from codomain (right). For the domain, $S_{I_l}$ 
    labels entries $x_l$, $l\in[L]$ for $x\in Q_L$ and, 
    for the codomain, $S_{J_r}$ 
    labels entries $y_r$, $r\in[R]$ for $y\in Q_R$. Every row represents one entry of the 
    map $f : x\mapsto y$ by listing all entries as $\{x,y\}$.}
	\label{tab:mmi}
\end{table}

The proof of Theorem \ref{thm:contraction} is constructive and, as shown in \cite{Bao:2015bfa},
leads to an algorithm for finding a contraction map or showing none exists. The
enumeration of all contraction maps is prohibitively expensive in all but very small cases.
However, the authors developed a greedy technique for partial search
which is successful in finding a map, when one exists. Indeed, we have found this method
very powerful in proving new inequalities valid for $H_6$.
For proving an inequality is invalid,
the previously-mentioned ILP approach, which will be presented in Section \ref{ilp},
is also very effective.

Theorem \ref{thm:contraction} provides a robust sufficient condition for an inequality to be valid, 
but it is not a necessary one. For example, even after exhausting
all of the possibilities given by the theorem, it was not able to prove the
validity of this inequality over $H_5$:
\begin{equation}
\begin{aligned}
\label{eq:beast}
    3 S_{123}&+3 S_{124}+S_{125}+S_{134}+3 S_{135}+S_{145}+S_{234}+S_{235}+S_{245}+S_{345} \ge \\
    2 S_{12}&+2 S_{13}+S_{14}+S_{15}+S_{23}+2 S_{24}+2 S_{35}+S_{45}
    +2 S_{1234}+2 S_{1235}+S_{1245}+S_{1345}.
\end{aligned}
\end{equation}
However, this inequality can be proved valid by expanding the codomain of $f$ by replacing 
coefficients greater than one on the right-hand side by a sum of terms with unit coefficients. 
For instance, a term like $2 S_{I}$ gets replaced by $S_{I}+S_{I}$, with the obvious
generalization applied to larger coefficients. Theorem \ref{thm:contraction} still applies 
and this time the desired contraction map does exist, thereby proving validity of \eqref{eq:beast}
By expanding the right-hand side there are more possible images for the contraction map, while 
the number of contraction conditions remains fixed. This may explain why this approach worked well
here and in other cases we have tried for larger $n$.

This mild generalization of the proof technique of Theorem \ref{thm:contraction} has been 
remarkably successful in proving inequalities for $n=6$, which motivates the following problem:

\begin{problem}
\label{prob:cont}
    In \eqref{eq:ineq}, if we replace terms $\beta_r S_{J_r}$ with $\beta_r \ge 2$
    by $\sum_{i=1}^{\beta_r} S_{J_r}$ and accordingly adjust $R$ to $\sum_{r=1}^{R}\beta_r$, 
    does Theorem \ref{thm:contraction} provide a necessary condition for validity over $H_n$?
\end{problem}

\subsection{Zero-lifting of valid inequalities and facets}
\label{flift}

Given an inequality $qS\ge 0$ over $H_n$, let $K\subseteq[n]$ be the subset of terminals
appearing in it. Then consider a family of disjoint, non-empty subsets $\{I_i\subseteq[n+1]\}_{i\in K}$ 
(not necessarily spanning).
The \textit{zero-lifting} of the inequality given by this family is obtained by replacing
each singleton $i\in I$ in every $S_I$ in $qS\ge 0$ by its corresponding $I_i$.
For example, the zero-lifting of $S_1 + S_2 \ge S_{12}$
from $H_3$ to $H_4$ corresponding to $I_1=\{2,3\}$ and $I_2=\{1,4\}$ yields $S_{23}+S_{14} \ge S_{1234}$.
The zero-lift where $I_i = \{i\}$ for every $i\in K$ is called the \textit{trivial zero-lift}.

\begin{proposition}\label{ineqlift}
    If an inequality $qS \ge 0$ is valid for $H_n$, then any
    zero-lift $q' S' \ge 0$ is valid for $H_{n+1}$.
\end{proposition}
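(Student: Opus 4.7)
The plan is to reduce the validity of $q'S' \ge 0$ over $H_{n+1}$ to that of $qS \ge 0$ over $H_n$ by showing that any realization of $S'\in H_{n+1}$ can be turned into a realization of some $S\in H_n$ whose coordinates agree with $S'$ on the support of $q$. If this is done, then $qS = q'S'$ holds as a numerical identity, so the assumed validity of $qS \ge 0$ immediately delivers $q'S' \ge 0$.

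First I would fix an arbitrary $S'\in H_{n+1}$, realized by some weighted complete graph $K_{N'}$ with terminal set $[n+1]$ and sink $N'$. Using the disjoint family $\{I_i\}_{i\in K}$ and the leftover set $L = [n+1] \sminus \bigcup_{i \in K} I_i$, I would build a new weighted complete graph $G$ whose vertices are: a merged vertex $\tilde{v}_i$ for each $i \in K$ representing the set $I_i$; a new isolated vertex $\tilde{v}_i$ for each $i \in [n]\sminus K$; a merged sink representing $L \cup \{N'\}$; and all bulk vertices inherited from $K_{N'}$. Edge weights between a pair of merged vertices are the sum of the weights in $K_{N'}$ across the constituent pairs, with missing edges assigned weight zero so that $G$ is a weighted $K_{N''}$. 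Identifying its terminal set with $[n]$, by Definition \ref{def:hec} the graph $G$ realizes some $S\in H_n$.

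The main step, and the one requiring most care, is to verify that $S_I = S'_T$ for every $\es\ne I \subseteq [n]$, where $T := \bigcup_{i \in I \cap K} I_i$. The key observation is that every cut $W \subseteq [N'-1]$ of $K_{N'}$ with $W \cap [n+1] = T$ automatically respects the merging constraints of $G$: by the disjointness of $\{I_i\}_{i\in K}$ and of $L$ from all $I_i$, each $I_j$ ($j\in K$) lies entirely on one side of $W$ while $L \cup \{N'\}$ sits on the sink side. Conversely, every cut of $G$ for $I$ unmerges to such a $W$ of the same weight, and the isolated vertices for $i\in[n]\sminus K$ carry no edges and contribute nothing. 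Hence the two minimum cut weights coincide. Since the support of $q$ lies in $2^K$, for each such $I$ one has $q_I S_I = q_I S'_{\bigcup_{i\in I} I_i}$, giving $qS = q'S'$, after which applying the validity of the original inequality finishes the argument.
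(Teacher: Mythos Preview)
Your proof is correct and follows essentially the same approach as the paper: contract each terminal set $I_i$ in the realizing graph for $S'$ to a single vertex, then observe that cuts for $\bigcup_{i\in I}I_i$ in the original graph biject weight-preservingly with cuts for $I$ in the contracted graph, giving $qS=q'S'$. Your version is in fact more carefully stated than the paper's: you explicitly merge the leftover terminals $L=[n+1]\setminus\bigcup_{i\in K}I_i$ into the sink and add isolated vertices for $[n]\setminus K$ so that the resulting graph genuinely has terminal set $[n]$, whereas the paper's proof simply says ``contract each terminal set $I_i$'' and leaves these details implicit.
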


\begin{proof}
Assume $qS \ge 0$ is valid for $H_n$. Proceed by contradiction by supposing 
$q$ has a zero-lift $q'$ such that $q' S' < 0$ for some $S'\in H_{n+1}$.
Such $S'$ must be realized by some weight map $w$ applied to $K_N$ for some
$N$. In this $K_N$, contract each terminal set $I_i$ to the vertex in $I_i$ 
with the minimum label,
combining parallel edges and summing their weights into a single edge, and deleting any loops.
Let $S$ be the realized $S$-vector in the new graph. 
We have $q'S' = q S \ge 0$, the desired contradiction.
\end{proof}

Before discussing lifting facets we need to recall some terminology from
Proposition \ref{prop:full}, in particular the weighted star graphs and
the construction of matrix $A^{n+1}$ in \eqref{eq:A3}. We will make frequent 
use of the square matrix $D^n$ of size $2^n-1$, defined by
\begin{equation}
    D^n_{I,J} = \abs{I\cap J}, \qquad\es\ne I\subseteq[n],\quad\es\ne J\subseteq[n+1],~n+1 \in J,
\end{equation}
and use the notation $D^n_J$ to refer to row $J$ of $D^n$.
An inequality $qS \ge 0$ in $\RR^{2^n-1}$ is called \textit{balanced} if $D_{\{j\}}q=0$ for all 
$j \in [n]$. By definition, balance is invariant 
under any permutation of terminals $[n]$, but need not be so under permutations of the extended 
terminals $\extn$. For instance, $S_1+S_2 \ge S_{12}$ is balanced but $S_1 + S_{12} \ge S_2$ is not.
Balance is equivalent to a seemingly stronger condition:

\begin{lemma}\label{lem:starbal}
    An inequality $qS \ge 0$ in  $\RR^{2^n-1}$ is balanced if and only if $D^nq=0$.
\end{lemma}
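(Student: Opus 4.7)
The plan is to prove the ``if'' direction trivially by specialization and the ``only if'' direction by expanding the intersection cardinality as a sum of indicator functions. Throughout, I interpret the shorthand $D^n_{\{j\}}$ (for $j\in[n]$) as the row of $D^n$ indexed by the admissible $J=\{j,n+1\}\ni n+1$; this row has entry $|I\cap J|=\delta(j\in I)$ in column $I\subseteq[n]$, so $D^n_{\{j\}}q = \sum_{I\ni j} q_I$, which is precisely the balance condition.

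First I would dispatch the easy direction. If $D^n q=0$, then in particular every row of $D^n$ annihilates $q$; taking the $|K|=1$ rows $J=\{j,n+1\}$ for $j\in[n]$ yields $D^n_{\{j\}}q=0$ for all $j\in[n]$, which is the definition of balance.

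For the converse, assume $qS\ge 0$ is balanced, so $\sum_{I\ni j}q_I = 0$ for every $j\in[n]$. Let $J=K\cup\{n+1\}$ with $\es\ne K\subseteq[n]$ be a general row label of $D^n$. Since $I\subseteq[n]$ by definition of the column index, we have $|I\cap J|=|I\cap K|$, and the key algebraic identity
\begin{equation*}
    |I\cap K| \;=\; \sum_{j\in K}\delta(j\in I)
\end{equation*}
lets us decompose the row action as
\begin{equation*}
    D^n_J q \;=\; \sum_{\es\ne I\subseteq[n]} |I\cap K|\, q_I
    \;=\; \sum_{j\in K}\sum_{I\ni j} q_I \;=\; \sum_{j\in K} 0 \;=\; 0.
\end{equation*}
Since $J$ was arbitrary among the row labels of $D^n$, this gives $D^n q=0$, completing the equivalence.

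There is essentially no obstacle: the lemma is a clean linearity statement, promoting the $n$ singleton balance conditions to all $2^n-1$ rows via the decomposition $|I\cap K|=\sum_{j\in K}\delta(j\in I)$. The only place where care is needed is the bookkeeping for the index sets of $D^n$ (rows labeled by $J\ni n+1$ with $J\ne\{n+1\}$, equivalently by nonempty $K\subseteq[n]$ via $K=J\setminus\{n+1\}$), which is why the shorthand $D^n_{\{j\}}$ unambiguously picks out the row $J=\{j,n+1\}$ used in the definition of balance.
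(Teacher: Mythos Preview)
Your proof is correct and is essentially the same argument as the paper's: both directions are handled identically, with the converse proved by decomposing $|I\cap J|$ as a sum of indicator functions so that a general row of $D^n q$ becomes a sum of the singleton-row quantities $D^n_{\{j\}}q$. Your explicit introduction of $K=J\setminus\{n+1\}$ and the clarification of the shorthand $D^n_{\{j\}}$ are helpful bookkeeping, but the mathematical content matches the paper exactly.
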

\begin{proof}
    Obviously, $D^nq=0$ implies balance. For the converse, writing out row $J$ of $D^nq$,
    \begin{equation}
        D^n_Jq = \sum_{\es\ne I\subseteq[n]} q_I \abs{I\cap J} = \sum_{j=1}^{n} \delta_J^{j}
        \sum_{\es\ne I\subseteq[n]} q_I \delta_I^{j} = \sum_{j=1}^{n} \delta_J^{j} \; D_{\{j\}}^n q,
    \end{equation}
    where $\delta_I^i=\delta(i\in I)$ 
    (cf. \eqref{eq:occur}) and we used $\abs{I\cap J} = \sum_{k=1}^n \delta_I^k \delta_J^k$.
    So $D^nq=0$ by balance.
\end{proof}

What follows is a new result which relates the trivial zero-lifting of facets to the notion of balance:
\begin{proposition}\label{prop:facetlift}
If $qS \ge 0$ is a balanced facet of $H_n$, then its trivial zero-lift $q'S' \ge 0$
is a balanced facet of $H_{n+1}$.
\end{proposition}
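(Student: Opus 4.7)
The plan is to check validity and balance first, and then to exhibit $2^{n+1}-2$ linearly independent roots of $q'S' \ge 0$ in $H_{n+1}$. Validity of the trivial zero-lift is immediate from Proposition \ref{ineqlift}. Balance of $q'$ is a direct check: since $q'_I = 0$ whenever $n+1 \in I$, for $j \in [n]$ one has $\sum_{I \ni j,\, I \subseteq [n+1]} q'_I = \sum_{I \ni j,\, I \subseteq [n]} q_I = 0$ by balance of $q$, and for $j = n+1$ the sum vanishes trivially.

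For the facet claim, I would assemble roots of $q'S' \ge 0$ of two kinds. Since $qS \ge 0$ is a facet of $H_n$, there exist $2^n - 2$ linearly independent extreme-ray roots $S^{(k)}$ of it in $H_n$; by Proposition \ref{hlift}, their zero-lifts $S'^{(k)}$ lie in $H_{n+1}$ and, by the way $q'$ is built, satisfy $q'S'^{(k)} = qS^{(k)} = 0$. In addition, I would use the $2^n$ star vectors $S^J \in H_{n+1}$ from the proof of Proposition \ref{prop:full} indexed by $\es \ne J \subseteq [n+1]$ with $n+1 \in J$. For such $J$, the formula $S^J_I = \abs{I \cap J} - \delta(I,J)$ combined with $q'_I = 0$ for $I \ni n+1$ gives
\begin{equation*}
    q'S^J = \sum_{\es \ne I \subseteq [n]} q_I\,\abs{I \cap (J \cap [n])},
\end{equation*}
because every $I \supseteq J$ contains $n+1$ and thus contributes $0$ through the $\delta$ term. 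By Lemma \ref{lem:starbal} applied to the balanced $q$, this right-hand side vanishes for every $J \cap [n] \subseteq [n]$, so all $2^n$ star vectors are roots.

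The main obstacle is showing these $2^{n+1} - 2$ vectors are linearly independent. I would mimic the column-reduction argument of Proposition \ref{prop:full}: partition the coordinates of $\RR^{2^{n+1}-1}$ into the blocks indexed by $I \subseteq [n]$, by $I' \cup \{n+1\}$ with $\es \ne I' \subseteq [n]$, and by $\{n+1\}$, and subtract from each column $I' \cup \{n+1\}$ the columns $I'$ and $\{n+1\}$. Under this change of coordinates, each zero-lifted ray $S'^{(k)}$ becomes $(S^{(k)}, \mathbf{0}, 0)$, while the star rows for $J \ni n+1$ assume exactly the block form appearing in the bottom two block-rows of the transformed matrix $\tilde A^{n+1}$ of Proposition \ref{prop:full}: namely $(D^n_{J'}, U_{J'}, 1)$ for $J = J' \cup \{n+1\}$ with $\es \ne J' \subseteq [n]$, and $(\mathbf{0}, \mathbf{0}, 1)$ for $J = \{n+1\}$, where $U = E^n - D^n - \mathbf{1}$ is upper triangular with $-1$ on the diagonal. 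Subtracting the last row from each middle-block row clears the $\mathbf{1}$ entries in the third column, leaving a block-triangular form whose rank is the rank of the $S^{(k)}$-matrix plus the rank of $U$ plus $1$, that is $(2^n - 2) + (2^n - 1) + 1 = 2^{n+1} - 2$. Hence the face $\{S' \in H_{n+1} : q'S' = 0\}$ has the maximal proper dimension, so $q'S' \ge 0$ is a facet of $H_{n+1}$.
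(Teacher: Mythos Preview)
Your proof is correct and follows essentially the same approach as the paper: validity via Proposition \ref{ineqlift}, then exhibiting $2^{n+1}-2$ linearly independent roots built from $2^n-2$ zero-lifted roots of the original facet together with the $2^n$ star vectors $S^J$ with $n+1\in J$, and verifying linear independence via the column reduction from Proposition \ref{prop:full}. The only cosmetic differences are that you insist the roots be extreme rays (invoking Proposition \ref{hlift}), whereas the paper uses arbitrary roots on the facet, and you spell out the extra row subtraction that makes the block-triangular rank count explicit; the paper simply cites the structure of $\tilde A^{n+1}$.
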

\begin{proof}
Suppose $qS \ge 0$ is a balanced facet of $H_n$.
Then it is a valid inequality of $H_{n+1}$ by Proposition \ref{ineqlift}.
We adopt the notation of Proposition \ref{prop:full} and build a matrix $A^{n+1}$
with the structure in \eqref{eq:matrices}, except it will now have only $2^{n+1}-2$ rows.
Let $B^n$ consist of $2^n-2$ linearly independent roots of $qS\ge 0$ as rows,
so that the first $2^n-2$ rows of $A^{n+1}$ become precisely their zero-lifts.
The trivial zero-lift has $q_I'=0$ for every $I\ni n+1$, so these are all roots
of $q'S'\ge0$ as well.
Since $qS\ge0$ is balanced we have $D^n q=0$ by Lemma \ref{lem:starbal}. 
So the corresponding rows of $A^{n+1}$ are roots of $q'S' \ge 0$ too.
The final row is also and so $A^{n+1}$ contains $2^{n+1}-2$ roots of $q'S'\ge 0$.
Performing the same column operations as in Proposition \ref{prop:full}, the resulting block
matrix (cf. $\tilde{A}^{n+1}$) shows that $A^{n+1}$ has maximal rank $2^{n+1}-2$.
Since $D_{\{j\}}^{n+1}q'=D_{\{j\}}^{n}q=0$, the lifted facet is also balanced.
\end{proof}

The following proposition clarifies the situation for subadditive inequalities,
which include the non-balanced Araki-Lieb inequalities in their orbits:

\begin{proposition}
\label{prop:sal}
For all $n \ge 2$, a zero-lift of a subadditive inequality \eqref{eq:SA} gives a facet if and 
only if, using the symmetry $S_{\extn \sminus I} = S_I$, it can be put in the singleton SA form
\begin{equation}
\label{eq:sal}
    S_i + S_j \ge S_{ij}, \qquad i \ne j \in \extn.
\end{equation}
\end{proposition}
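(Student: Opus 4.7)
The proof naturally splits along the two implications. For the backward direction, namely that singleton SA is a facet of $H_n$, the plan is to bootstrap from the base case $n=2$, where $S_1+S_2\ge S_{12}$ was already identified as one of the three facets of the simplex $H_2$. A direct computation with $q_1=q_2=1$, $q_{12}=-1$ against $D^2$ confirms this inequality is balanced, so iterating Proposition~\ref{prop:facetlift} yields that its trivial zero-lift remains a balanced facet of $H_n$ for every $n\ge 2$. The $Sym_{n+1}$-invariance of $H_n$ propagates facet-ness to the entire orbit, which, after invoking the symmetry $S_{\extn\sminus I}=S_I$ to identify orbit representatives, is exactly $\{S_i+S_j\ge S_{ij}:i\ne j\in\extn\}$.

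For the forward direction I would establish the contrapositive: a zero-lifted SA that cannot be placed in singleton SA form via the symmetry is not a facet. Writing the zero-lifted SA as $S_A+S_B\ge S_{AB}$, with $(A,B,C)$ partitioning $\extn$, $A,B\subseteq[n]$, and $C\ni N$, the symmetry $S_{\extn\sminus I}=S_I$ lets one freely choose which of $A,B,C$ sits on the right-hand side, so singleton SA form is attainable precisely when at least two of the three parts are singletons. Its failure means at least two parts have size $\ge 2$, and a further $Sym_{n+1}$-permutation (relocating the sink and permuting parts) allows one to assume $|A|,|B|\ge 2$ with $C\ni N$. The key step is then the identity
\begin{equation*}
S_A+S_B-S_{AB}=\bigl(S_{A'}+S_B-S_{A'B}\bigr)+\bigl(S_a+S_B-S_{aB}\bigr)+\bigl(S_A+S_{A'B}+S_{aB}-S_{A'}-S_a-S_B-S_{AB}\bigr),
\end{equation*}
for any $a\in A$ and $A':=A\sminus\{a\}$, verified by straightforward cancellation. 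The three summands on the right are, respectively, SA on $(A',B)$, SA on $(\{a\},B)$, and MMI~\eqref{eq:mmig} on the disjoint triple $(A',\{a\},B)\subseteq[n]$, all valid on $H_n$ (general MMI arising as a zero-lift of the $H_3$ MMI~\eqref{eq:mmi} via Proposition~\ref{ineqlift}).

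The MMI summand has seven non-zero coefficients while the original SA has only three, so they cannot be proportional. Since any root $S\in H_n$ of $S_A+S_B=S_{AB}$ must simultaneously saturate each non-negative summand, it also saturates the MMI, forcing all such roots into a subspace of $\RR^{2^n-1}$ of codimension at least~$2$, which forbids the SA from spanning a facet hyperplane. The case $n=2$ requires no decomposition, since $\abs{\extn}=3$ makes every SA automatically singleton SA. The main obstacle is locating the correct MMI-based decomposition; once the identity is written down, its verification is mechanical, and the conclusion follows from the standard principle that a positive conic combination of valid inequalities, at least one summand of which is not proportional to the combination itself, cannot define a facet.
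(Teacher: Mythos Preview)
Your proof is correct and follows essentially the same approach as the paper. In particular, your decomposition of $S_A+S_B-S_{AB}$ into two SA terms plus one MMI term is, after the relabelling $B\leftrightarrow I$, $A'\leftrightarrow J$, $\{a\}\leftrightarrow K$, exactly the paper's decomposition of $S_I+S_{JK}-S_{IJK}$ as $(S_I+S_J-S_{IJ})+\text{MMI}(I,J,K)+(S_I+S_K-S_{IK})$. Your setup via the tripartition $(A,B,C)$ of $\extn$ and the explicit codimension argument are somewhat more detailed than the paper's terse ``we may write it as $S_I+S_{JK}\ge S_{IJK}$ \ldots therefore it is not a facet'', but the substance is the same.
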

\begin{proof}
Since singleton SA is a balanced facet of $H_2$, so is $S_i + S_j \ge S_{ij}$ for $H_n$ by
Proposition \ref{prop:facetlift}, as it can be obtained by iterating trivial zero-lifts and
making a $Sym_n$ permutation at the end.

For the converse, if a subadditive inequality is not in the form \eqref{eq:sal}, we may write it
as $S_I + S_{JK} \ge S_{IJK}$, for non-empty subsets $I$, $J$ and $K$. This inequality is the 
sum of three valid inequalities for $H_n$: the general SA inequality \eqref{eq:SA}, the general 
MMI inequality \eqref{eq:mmig} and $S_I + S_K \ge S_{IK}$. Therefore, it is not a facet of $H_n$.
\end{proof}

Apart from nonnegativity and the Araki-Lieb inequality associated to \eqref{eq:sal},
all known facets of $H_n$ are balanced. While balance is sufficient for 
trivial zero-lifts to preserve facets, a stronger condition is needed for general zero-lifts. 
A balanced inequality $qS \ge 0$ is \textit{superbalanced} if every inequality in its symmetry
orbit under $Sym_{n+1}$ permutations of $\extn$ is balanced \cite{Hubeny:2018ijt,He:2020xuo}. 
Since balance is
invariant under permutations of $[n]$, it is in fact only necessary to 
check if exchanges of every $i\in[n]$ with $N$ yield balanced inequalities.
Orbits of superbalanced inequalities are referred to as superbalanced. 
For example, SA in \eqref{eq:SA} for $I,J\subseteq[n]$ is balanced but not superbalanced 
and MMI in \eqref{eq:mmi} is superbalanced. 
According to results stated in \cite{He:2020xuo}, besides the singleton SA orbit, every orbit of 
facets of $H_n$ for $n\ge2$ is superbalanced.

We can now generalize Proposition \ref{prop:facetlift} to arbitrary zero-lifts.
For an inequality $qS\ge 0$ in $\RR^{2^n-1}$, let
\begin{equation}\label{eq:qtilde}
    \tilde{q}_I = \sum_{I \subseteq J \subseteq [n]} q_J.
\end{equation}

\begin{lemma}\label{lem:sb}
    An inequality $qS\ge 0$ in $\RR^{2^n-1}$ 
    is superbalanced if and only if $\tilde{q}_I=0$ for every $I\subseteq[n]$ with $\abs{I}\le 2$.
\end{lemma}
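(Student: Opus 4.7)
The plan is to exploit that $Sym_{n+1}$ is generated by $Sym_n$ (permuting $[n]$) together with the transpositions $\sigma_j=(j,N)$ for $j\in[n]$. Since both of the candidate conditions $\tilde{q}_{\{i\}}=0$ and $\tilde{q}_{\{i,j\}}=0$ are manifestly $Sym_n$-invariant (a permutation $\tau\in Sym_n$ merely relabels the index sets in those sums), the whole argument reduces to analysing how single transpositions $\sigma_j$ transform the relevant $\tilde{q}$-coefficients.

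First I would make the action of $\sigma_j$ on $q$ explicit. The permutation sends $S_I\mapsto S_{\sigma_j(I)}$; when $j\notin I$ this is the identity, while when $j\in I$ the image contains $N$, so the identification $S_L=S_{[n;N]\setminus L}$ recasts it as $S_{([n]\setminus I)\cup\{j\}}$. Reading off the coefficient of $S_K$ in $q':=\sigma_j q$ therefore gives
\begin{equation*}
    q'_K=\begin{cases}q_K,& j\notin K,\\ q_{([n]\setminus K)\cup\{j\}},& j\in K.\end{cases}
\end{equation*}
The central calculation is then to evaluate $\tilde{q'}_{\{i\}}=\sum_{K\ni i}q'_K$. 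For $i=j$, the substitution $L=([n]\setminus K)\cup\{j\}$ is an involutive bijection on $\{K\subseteq[n]:j\in K\}$, yielding $\tilde{q'}_{\{j\}}=\tilde{q}_{\{j\}}$. For $i\ne j$, I would split the sum on whether $j\in K$, apply that substitution in the $j\in K$ part, and use the identities $\sum_{L\ni j,\,i\notin L}q_L=\tilde{q}_{\{j\}}-\tilde{q}_{\{i,j\}}$ and $\sum_{K\ni i,\,j\notin K}q_K=\tilde{q}_{\{i\}}-\tilde{q}_{\{i,j\}}$ to obtain
\begin{equation*}
    \tilde{q'}_{\{i\}}=\tilde{q}_{\{i\}}+\tilde{q}_{\{j\}}-2\tilde{q}_{\{i,j\}}.
\end{equation*}

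The ``only if'' direction then follows at once: superbalance forces $q$ and $\sigma_j q$ both balanced, so $\tilde{q}_{\{i\}}=\tilde{q}_{\{j\}}=0$ and the displayed identity gives $\tilde{q}_{\{i,j\}}=0$ for every $i\ne j$ in $[n]$. For the converse I would run an entirely parallel case analysis on $\tilde{q'}_{\{i,j'\}}$, splitting according to whether $j$ coincides with $i$, with $j'$, or with neither, to express it as a linear combination of singleton- and pair-sums of $q$; under the hypothesis $\tilde{q}_{\{\cdot\}}=\tilde{q}_{\{\cdot,\cdot\}}=0$ all of these vanish, so the pair-vanishing condition (as well as singleton-vanishing) is preserved by each $\sigma_j$. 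Since it is also preserved by $Sym_n$, induction on the word length of $\sigma\in Sym_{n+1}$ in the chosen generators gives that $\sigma q$ is balanced for every $\sigma\in Sym_{n+1}$, i.e.\ $q$ is superbalanced. The main obstacle is purely combinatorial bookkeeping---the two re-indexings and the inclusion-exclusion underlying the boxed formula---rather than anything conceptual.
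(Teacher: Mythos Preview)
Your proposal is correct and shares with the paper the same core computation: deriving the transformation law $\tilde{q'}_{\{i\}}=\tilde{q}_{\{i\}}+\tilde{q}_{\{j\}}-2\tilde{q}_{\{i,j\}}$ under the transposition $\sigma_j=(j,N)$, from which the ``only if'' direction follows immediately.

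Where you diverge from the paper is in the converse. The paper relies on an observation stated just before the lemma: since every $\sigma\in Sym_{n+1}$ is either in $Sym_n$ or factors as $\tau\sigma_j$ with $\tau\in Sym_n$, it suffices to check balance of $q$ and of each $\sigma_j q$. Given this, the displayed formula closes the argument in one line. You instead propose to show that the full condition $\tilde{q}_{\{\cdot\}}=\tilde{q}_{\{\cdot,\cdot\}}=0$ is preserved by each generator $\sigma_j$, and then induct on word length. This requires the additional computation of $\tilde{q'}_{\{i,j'\}}$; the cancellation there is real (in the generic case $j\notin\{i,j'\}$ one gets $\tilde{q'}_{\{i,j'\}}=\tilde{q}_{\{i,j'\}}+\tilde{q}_{\{j\}}-\tilde{q}_{\{i,j\}}-\tilde{q}_{\{j,j'\}}$, with the triple terms dropping out), so your plan goes through. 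The paper's route is shorter because it exploits the coset decomposition up front; yours is a bit more laborious but entirely self-contained, not depending on that prior remark.
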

\begin{proof}
    That $\tilde{q}_{\{i\}}=0$ for all $i\in[n]$ is just the definition of balance, which is an invariant 
    property under permutations of $[n]$. Permutations of $\extn$ also allow for reflections
    $j\leftrightarrow N$ for each $j\in[n]$. Using $S_{\extn\sminus K} = S_{K}$, the $S$-vector entries 
    $S_I'$ after reflection are related to the $S_I$ before reflection by $S_J'=S_J$ and
    $S_{J\cup\{j\}}'=S_{[n]\sminus J}$ for $J\not\ni j$.
    For example, $1\leftrightarrow N$ for $n=3$
    gives $S'=(S_{123},S_2,S_3,S_{13},S_{12},S_{23},S_{1})$.
    The coefficients $q_I$ in $qS\ge 0$ behave accordingly. Under a $j\leftrightarrow N$ reflection,
    \eqref{eq:qtilde} gives $\tilde{q}_{\{j\}}'=\tilde{q}_{\{j\}}$, while for $i\ne j$ one gets
    \begin{equation}
        \tilde{q}_{\{i\}}'
        = \sum_{i \in J \subseteq [n]\sminus\{j\}} (q_{J\cup\{j\}}' + q_J')
        = \sum_{i \in J \subseteq [n]\sminus\{j\}} (q_{[n]\sminus J} + q_J)
        = \sum_{j \in J \subseteq [n]\sminus\{i\}} q_{J} + \sum_{i \in J \subseteq [n]\sminus\{j\}} q_J.
    \end{equation}
    The first sum is over all $q_J$ such that $J\ni j$ but $J\not\ni i$, so it differs 
    from $\tilde{q}_{j}$ precisely by $\tilde{q}_{\{i,j\}}$.
    Similarly for the second sum exchanging $i\leftrightarrow j$, so
    \begin{equation}\label{eq:sbsim}
        \tilde{q}_{\{i\}}' = \tilde{q}_{\{i\}} + \tilde{q}_{\{j\}} - 2 \tilde{q}_{\{i,j\}}.
    \end{equation}
    After the exchange $j\leftrightarrow N$, $q'S\ge 0$ is balanced if and only if $\tilde{q}_{\{i\}}'=0$ 
    for all $i\in[n]$. Therefore $qS\ge 0$ is superbalanced if and only if 
    $\tilde{q}_{\{i\}}=\tilde{q}_{\{i\}}'=0$ for all $i\in[n]$. Applied to \eqref{eq:sbsim}, 
    this means $qS\ge 0$ is superbalanced if and only if $\tilde{q}_{\{i\}}=\tilde{q}_{\{i,j\}}=0$ 
    for all $i,j\in[n]$.
\end{proof}

We now show that any zero-lift of a superbalanced facet can actually be built solely out of 
trivial zero-lifts combined with permutations of the extended terminals, both of which preserve 
facets. Superbalance is needed for such permutations to preserve balance and thus keep 
Proposition \ref{prop:facetlift} applicable. 
It is also important in what follows that, as is clear from Lemma \ref{lem:sb} 
and the form of \eqref{eq:qtilde}, balance and superbalance are properties which are shared by 
inequalities related by trivial zero-lifts.
We first observe that the trivial 
zero-lift from $H_n$ to $H_{n+1}$ can be thought of as treating the
new terminal $n+1$ as a duplication of the sink 
(since the sink does not appear anywhere in $qS\ge 0$, neither does $n+1$ in $q'S'\ge 0$). 
But by the symmetry of $H_n$ under $Sym_{n+1}$ permutations of $\extn$, we could analogously 
consider letting $n+1$ duplicate any other terminal. Let us call such a generalization of a trivial 
zero-lift where any one extended terminal becomes a doubleton and the rest remain singletons 
a \textit{simple zero-lift}. We obtain the following novel result:

\begin{theorem}\label{thm:facet}
    If $qS \ge 0$ is a superbalanced facet of $H_n$, then any zero-lift
    $q'S'\ge 0$ is a superbalanced facet of $H_{n+1}$.
\end{theorem}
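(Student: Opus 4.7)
The plan is to decompose an arbitrary zero-lift from $H_n$ to $H_{n+1}$ into a sequence of three operations, each preserving the superbalanced facet property: (i) a permutation in $Sym_{n+1}$ acting on $qS\ge 0$, (ii) the trivial zero-lift from $H_n$ to $H_{n+1}$ of Proposition~\ref{prop:facetlift}, and (iii) a permutation in $Sym_{n+2}$ acting on the resulting inequality. To set this up, I would first analyze the shape of the zero-lift data $\{I_i\subseteq[n+1]\}_{i\in K}$. Since the $I_i$ are disjoint nonempty subsets of a set of size $n+1$, one has $\sum_{i\in K}\abs{I_i}\le n+1$, and in the essential case $K=[n]$ this forces at most one $I_j$ to be a doubleton, with every other $I_i$ a singleton. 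After a relabeling in $Sym_{n+2}$ of $[n+1;N]$, the zero-lift therefore reduces to either the trivial zero-lift itself (all singletons) or to a simple zero-lift in which $n+1$ duplicates some $j\in[n]$.

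The key technical claim is that the simple zero-lift where $n+1$ duplicates $j\in[n]$ equals the trivial zero-lift conjugated by the transposition $(j,N)$: concretely, first apply $(j,N)\in Sym_{n+1}$ to $qS\ge 0$ on $\extn$ (re-expressing via the symmetry $S_{\extn\sminus I}=S_I$), then perform the trivial zero-lift to $H_{n+1}$, and finally apply $(j,N)\in Sym_{n+2}$ to the lifted inequality on $[n+1;N]$. Intuitively, the trivial zero-lift places $n+1$ in the role of a second sink, so conjugating by the swap $(j,N)$ moves the ``invisible'' label to $j$ and thereby forces $n+1$ to duplicate $j$ in the final inequality. This identity can be verified directly by tracking how the partitions defining each $S_I$ transform through the three steps.

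With the decomposition in hand, preservation of the superbalanced facet property becomes mechanical. By Lemma~\ref{lem:sb}, superbalance of $qS\ge 0$ is precisely the invariance of balance under all $Sym_{n+1}$-permutations of $\extn$, so step~(i) produces another superbalanced facet of $H_n$. Step~(ii) then yields a balanced facet of $H_{n+1}$ by Proposition~\ref{prop:facetlift}, and superbalance is inherited because the substitution of the trivial zero-lift leaves every $\tilde{q}_I$ in \eqref{eq:qtilde} unchanged for $\abs{I}\le 2$. Finally, step~(iii) preserves the facet property by the $Sym_{n+2}$-symmetry of $H_{n+1}$ and preserves superbalance by Lemma~\ref{lem:sb} once more. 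The main obstacle in this plan is a careful verification of the conjugation identity for simple zero-lifts; once that is established, the rest of the argument is just a chaining of existing results, with edge cases such as $K\subsetneq[n]$ or multiple doubletons reduced to the core case by further trivial zero-lifts within $H_n$.
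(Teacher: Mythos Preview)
Your proposal is correct and follows essentially the same route as the paper's proof: both reduce an arbitrary zero-lift to simple zero-lifts and then realize each simple zero-lift as the trivial zero-lift conjugated by the transposition swapping the duplicated terminal with the sink, invoking Proposition~\ref{prop:facetlift} on the (balanced-by-superbalance) intermediate inequality. The only cosmetic differences are that the paper phrases the reduction by first passing to canonical form over $H_{|K|}$ and then iterating simple zero-lifts, whereas you exploit the counting $\sum_i|I_i|\le n+1$ to note that in the canonical case a single simple zero-lift suffices; the handling of $K\subsetneq[n]$ and the preservation of superbalance under trivial zero-lifts are treated identically.
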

\begin{proof}
If $qS\ge 0$ is a facet inequality over $H_n$ involving a subset of terminals $K$ 
    with $\abs{K}<n$, then put it in canonical form as an inequality over $H_{\abs{K}}$.
Iterating Proposition \ref{hlift}, note that $H_{\abs{K}}$ is a projection of $H_n$.
Since $q$ is in canonical form, its coefficients are not changed in projecting it to  $H_{\abs{K}}$.
A standard result of polyhedral theory is that facets project to facets, so
$qS \ge 0$ is a superbalanced facet of $H_{\abs{K}}$.

Starting from $qS\ge 0$, one can get to $q'S'\ge 0$ as follows. If the original zero-lift had 
$\{i\}\mapsto I_i$, perform $\abs{I_i}-1$ simple zero-lifts appending terminals to $i$, and repeat 
for every $i\in[n]$. If less than $n+1-\abs{J}$ steps were required, reach all the way to 
$H_{n+1}$ via trivial zero-lifts. At that point, a suitable permutation of $[n+1]$ yields $q'S'\ge 0$.

We now show that every simple zero-lift used above can in fact be built solely out of permutations 
and trivial zero-lifts. In particular, the simple zero-lift involving $I_i=\{i,j\}$ is equivalently 
accomplished by exchanging $i\leftrightarrow N$, performing a trivial zero-lift, and then exchanging 
the new sink back with $i$. If the original inequality is superbalanced, the trivial zero-lift in 
this process is applied to a balanced inequality. Using Proposition \ref{prop:facetlift}, one ends 
up with a facet if one started with a facet. Furthermore, the latter is superbalanced if the former is.
Hence one can go from $qS\geq0$ to $q'S'\geq0$ via superbalance- and
facet-preserving steps.
\end{proof}

\section{Integer programs for testing realizability and validity}
\label{ilp}

This section describes novel methods for checking if an $S$-vector is realizable (and if so 
finding a graph realization), and for checking if a given inequality is valid.
A direct test of the realizability of an $S$-vector in $K_N$ can be performed by a feasibility test 
of a mixed integer linear program (ILP). Similarly, an inequality $qS \ge 0$ can be tested to see if 
it is valid for all $S$-vectors that can be realized in $K_N$. As noted earlier, the polyhedral
approach described so far does not force the minimum in \eqref{Sdef} to be realized by one of the 
inequalities \eqref{SC}. However, using binary variables this can be achieved and the feasibility of
the resulting system tested using ILP solvers such as 
\cplex, \glpsol~or \gurobi.

For any $N > n \ge 3$, we build a set of constraints, $\ILP_{N,n}$, whose feasible solution is the
set of all suitably-normalized, valid $(S,w)$ pairs on $n$ terminals realizable in $K_N$.
Firstly, note that for each $\es \ne I \subseteq [n]$, the number of cuts $W$ in $K_N$ that contain 
$I$ is $2^{N-\abs{I}-1}$. 
For each such $W$ and $I$, we introduce a binary variable $y_{W,I}$. Specifically, we consider the 
following system:

\tool{$\bf \ILP_{N,n}$}
\vspace{-.25cm}\hrule\vspace{.25cm}
\hspace{.5cm} For all $ \es \ne I \subseteq [n]$ and cuts $W\subseteq[N-1]$ in $K_N$ such that $ I =W \cap [n]$,
\vspace{-.2cm}
\begin{alignat}{2}
\label{ilpform}
S_I &~\le~ \norm{C(W)}, \\
\label{slack}
\norm{C(W)} &~\le~ S_I +\abs{W}~(N-\abs{W})~y_{W,I}, \\
\label{ysum} 
\sum_{W \cap [n]=I} y_{W,I} &~=~ 2^{N-\abs{I}-1} - 1, \\
\label{binary}
y_{W,I} &~\in~ \{0,1\}, \\
\label{bounds}
0 ~\le~ & w(e) ~\le~ 1, \qquad \forall~ e \in E_N.
\end{alignat}
\vspace{-.4cm}\hrule

\begin{proposition}
\label{prop:ilp}
A pair $(S,w)$ is valid in $K_N$ with all edge weights at most one if and only if there exists 
assignments to variables $y$ so that $\{S,w,y\}$ is a feasible solution to $\ILP_{N,n}$.
\end{proposition}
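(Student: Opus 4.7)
The plan is to show both directions by interpreting the binary variables $y_{W,I}$ as indicators that select, for each nonempty $I\subseteq[n]$, a single cut $W$ achieving the minimum in \eqref{Sdef}. The key structural observation is that the product $|W|(N-|W|)$ is exactly the number of edges in $C(W)$, so under the normalization $w(e)\le 1$ imposed by \eqref{bounds} one has $\|C(W)\|\le |W|(N-|W|)$ for every cut $W$. This is what makes \eqref{slack} a valid big-$M$ style constraint that becomes vacuous when $y_{W,I}=1$ and forces $\|C(W)\|\le S_I$ when $y_{W,I}=0$.

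For the forward direction, assume $(S,w)$ is valid in $K_N$ with $w(e)\le 1$ for all $e$. For each $\es\ne I\subseteq[n]$ pick any cut $W_I^*$ achieving the minimum in \eqref{Sdef}, e.g.\ the unique minimal min-cut guaranteed by Theorem \ref{basic}\ref{thm:unique}. Define $y_{W_I^*,I}=0$ and $y_{W,I}=1$ for all other cuts $W$ with $W\cap[n]=I$. Then \eqref{ilpform} holds by definition of $S_I$, \eqref{ysum} and \eqref{binary} hold by construction, and \eqref{bounds} is assumed. For \eqref{slack}, the chosen cut satisfies $\|C(W_I^*)\|=S_I$ with $y_{W_I^*,I}=0$, while for every other $W$ we have $\|C(W)\|\le |W|(N-|W|)\le S_I+|W|(N-|W|)$ since $S_I\ge 0$ and $y_{W,I}=1$.

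For the reverse direction, suppose $\{S,w,y\}$ is feasible. Fix $\es\ne I\subseteq[n]$. Since $y_{W,I}\in\{0,1\}$ and the number of cuts $W$ with $W\cap[n]=I$ is exactly $2^{N-|I|-1}$, equation \eqref{ysum} forces precisely one such cut $W_I^*$ to satisfy $y_{W_I^*,I}=0$. For this particular cut, \eqref{slack} reduces to $\|C(W_I^*)\|\le S_I$, which combined with \eqref{ilpform} gives $\|C(W_I^*)\|=S_I$. Since \eqref{ilpform} holds for every cut $W$ with $W\cap[n]=I$, we conclude $S_I=\min_{W\cap[n]=I}\|C(W)\|$, so $(S,w)$ realizes $S$ in $K_N$ and \eqref{bounds} ensures $w(e)\le 1$.

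The only delicate point is verifying that $|W|(N-|W|)$ is genuinely an upper bound on $\|C(W)\|$ under the edge-weight normalization, so that the big-$M$ coefficient in \eqref{slack} is large enough to deactivate the constraint when $y_{W,I}=1$ without being so large as to admit spurious feasible solutions in the other direction; this is immediate from counting edges across the bipartition and from $w(e)\le 1$. No other step is subtle: the rest is a routine translation between ``exactly one $y_{W,I}=0$'' and ``at least one cut achieves the minimum''.
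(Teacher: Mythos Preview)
Your proof is correct and follows essentially the same approach as the paper: in both directions you interpret the single $y_{W,I}=0$ enforced by \eqref{ysum} as selecting a witnessing min-cut, and you use the edge-count bound $\|C(W)\|\le |W|(N-|W|)$ under $w(e)\le 1$ to verify that \eqref{slack} is slack whenever $y_{W,I}=1$. The only inessential difference is your invocation of Theorem~\ref{basic}\ref{thm:unique} to pick $W_I^*$, which is harmless but unnecessary since any min-cut for $I$ would do.
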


\begin{proof}
Suppose that $(S,w)$ is a valid pair in $K_N$ with all edge weights at most one. We will show that $y$
variables can be chosen so that $\{S,w,y\}$ constitutes a feasible solution to  $\ILP_{N,n}$.
Firstly, by assumption $w$ satisfies \eqref{bounds}. Next, since $(S,w)$ is a realization in $K_N$,
the upper bounds in \eqref{ilpform} are valid. For each $ \es \ne I \subseteq [n]$, choose one 
$W_I \subseteq [N-1]$ so that $W_I$ realizes a minimum in \eqref{Sdef}, and set $y_{W_I,I}=0$.
All other $y$ variables for this $I$ are set to $1$, thus satisfying \eqref{ysum} and \eqref{binary}.
Since  $y_{W_I,I}=0$, the corresponding equation \eqref{slack} gets zero as the 
second term in its right-hand side, and thus combines with \eqref{ilpform} into the required equation.
The remaining inequalities to verify are those in \eqref{slack} when $y_{W,I}=1$. Their validity follows 
from the fact that the cut $W$ in $K_N$ contains $\abs{W} (N-\abs{W})$ edges, each of weight at most one.

Conversely, let $\{S,w,y\}$ be a feasible solution of $\ILP_{N,n}$. For each $ \es \ne I \subseteq [n]$, 
\eqref{ysum} implies that there is a single variable, which we label $y_{W_I,I}$, having value zero.
The other $y$ values for this $I$ are one. Together with \eqref{ilpform}, this implies that
$S_I = \norm{C(W_I)}$ and that $S_I$ satisfies \eqref{Sdef}. So $(S,w)$ is a valid pair realized in 
$K_N$ with all edge weights at most one.
\end{proof}

We make use of this ILP formulation in two ways. Firstly, it can be used to test whether or not an 
$S$-vector is realizable in $K_N$ for a given $N$. To do this, we pre-assign the values from the given 
$S$-vector to the corresponding $S$ variables in $\ILP_{N,n}$, rescaled to values smaller than $1$. 
We may then run an ILP solver to test whether there is a feasible solution. 
If so, the values or the variables $w$ will give a realization
in $K_N$. Otherwise, one concludes that the given $S$-vector cannot be represented in any $K_{N'}$ with
$N'\le N$. Secondly, we may use the ILP to test whether an inequality $qS \ge 0$ is invalid for some 
$S$-vector realized in $K_N$ for a given $N$. This can be done by minimizing $z=qS$ over $\ILP_{N,n}$ 
and seeing if the optimum solution is negative. The computation can be terminated when the first feasible 
solution with $z<0$ is found, at which point $qS \ge 0$ is proven invalid.
We could prove that an inequality $qS \ge 0$ is
valid over $H_n$ by testing it with $\ILP_{m(n),n}$, but this ILP would be very
large with current bounds on $m(n)$.

In its first formulation, the ILP allows one to find the minimum value $N_{min}$ of $N$ for which an
$S$-vector is realizable in $K_N$. Given an $S$-vector, we call any such $K_{N_{min}}$
a \textit{minimum realization}. At fixed $n$, we define $m_{ext}(n)$ as the smallest integer such that 
all extreme rays of $H_n$, and hence of $H_{m_{ext}(n),n}$, are realizable in $K_{m_{ext}(n)}$ 
(cf. the definition of $m(n)$). For $1\le n \le 5$, the ILP shows that $m_{ext}(n)$ is
\begin{equation}
    2,~~3,~~5,~~6,~~11.
\end{equation}
Combining all extreme-ray graphs into a larger one by identifying them all at $\extn$ (cf. conically 
combining $S$-vectors), one can also see that for $1\le n \le 3$, $m(n)$ takes values $2$, $3$ and $5$. 
Namely, no bulk vertices are needed for $n=1,2$, and just a single one comes into play for $n=3$. 

The case $n=4$ is less trivial. There are two star-graph orbits of $5$ extreme rays each,
see Figure \ref{fig:eg} in Appendix \ref{app:rays}. These are $10$ extreme rays realizable in $K_{6}$,
which contains a single bulk vertex.
The other extreme rays of $H_4$ involve no bulk vertices. Hence, a convex combination of $15$ extreme rays 
may require a total of $10$ bulk vertices at most, which with the terminals and sink gives $m(4)\le 15$. 
This can be further improved as follows. The Bell-pair extreme rays span a subspace of dimension $10$,
and the star-graph extreme rays are confined to its $5$-dimensional orthogonal complement. 
Thus at most $5$ star-graph extreme rays are needed to conically span any interior ray of $H_4$, improving the
bound down to $m(4)\le 10$. It turns out that the simplicity of the specific extreme-ray graphs for $n=4$ 
in fact allows us to obtain the definite value $m(4)=6$. The reason for this is that the $S$-vector of 
any conical combination of these particular extreme-ray star graphs of $H_4$ can itself also be realized
on a star graph. This follows from the observation that all $n=4$ extreme-ray star graphs have identical 
minimal min-cuts: for every $\es\ne I\subseteq[n]$, they all have $W_I = I$ for $\abs{I}=1,2$ and 
$W_I = I\cup\{n\}$ for $\abs{I}=3,4$. Pictorially, this allows one to stack them all on top of each other, 
adding up their edge weights, so as to realize any combination of these star graphs by a star graph.

This discussion illustrates some strategies for obtaining tighter upper bounds on 
$m(n)$ based on knowledge of extreme rays or the value of $m_{ext}(n)$. Recall that the number of bulk 
vertices in $K_N$ is $N-n-1$. Regardless of how many extreme rays $H_n$ has, any interior 
ray may be a conical combination of at most $2^{n}-1$ of them. Since we can 
realize all extreme rays in $K_{m_{ext}(n)}$, we have
\begin{equation}\label{eq:mnb}
    m(n) \le (m_{ext}(n)-n-1)\times (2^{n}-1) + n + 1.
\end{equation}
For instance, since we know $m_{ext}(5)=11$, this gives $m(5) \le 161$, which is considerably better than 
the bound $m(5) < M(5)=2546$ given earlier.

We can do even better for $n=5$ by using explicit results about the dimensionality of the span of specific
extreme-ray orbits. The Bell pairs take care of $15$ dimensions which are not reached by any other extreme
ray without introducing any bulk vertices. There is a single orbit that requires $N=11$, and it consists of
$75$ extreme rays spanning a subspace of dimension $10$ of the remaining $16$ of $H_5\subset\RR^{31}$. The
largest-$N$ orbit spanning the other $6$ dimensions has $N=8$ and $360$ extreme rays. Hence the worst-case
scenario would require $10$ graphs with $N=11$ and other $6$ with $N=8$. The total number of vertices
carried by a combination of such graphs thus gives the bound $m(5) \le 74$. This is better than the more 
general one attained by \eqref{eq:mnb}, but requires complete knowledge of all extreme-ray graphs, not just 
of the number $m_{ext}(n)$.

\begin{problem}
    Find tighter bounds on $m_{ext}(n)$. In particular, does $\log_2 m_{ext}(n)$ admit an upper bound that 
    is polynomial in $n$?
\end{problem}

\section{Computing \texorpdfstring{$H$}{H}- and 
\texorpdfstring{$V$}{V}-representations of \texorpdfstring{$H_n$}{Hn}}
\label{H5}

To date, there existed no direct or algorithmic procedures for constructing $H_n$, 
and all results obtained for up to $n=5$ relied on random/heuristic searches.
This section provides two novel systematic methods for computing complete descriptions of $H_n$.
We illustrate them for $n=5$ and describe $H_5$ in detail in Appendix \ref{good}.\footnote{A partial description of $H_5$ was first obtained by \cite{Bao:2015bfa} and only four years later 
completed by \cite{Cuenca:2019uzx}; the approaches proposed here only take a few hours and additionally obtain provably minimum graph realizations of all extreme rays unknown to date.}
We also show how partial results for $n=6$ can be obtained by our methods. However, a complete description of $H_6$ appears to be beyond current computational capabilities.
Further upgrading our methods to obtain $H_6$ is the subject of work in progress with Bogdan Stoica, to be reported elsewhere.
The first method is a general formalization of the strategy used earlier for $n=3,4$, 
whereas the second one constructs $H_{n}$ starting from knowledge of $H_{n-1}$.

\subsection{Method 1}

To initialize this method we first set $k=2$.

\tool{\textbf{Method $\bf 1$}}
\label{mm1}

\begin{enumerate}[label=(\alph*),leftmargin=2.5\parindent,rightmargin=2\parindent]
\setlength\itemsep{-1pt}
\hrule\vspace{-.1cm}
\item\label{1b}
Generate the $H$-representation \texttt{P\{n+k\}-n.ine} of $P_{n+k,n}$ using \eqref{SCnn}. 
Convert
this to a $V$-representation \texttt{P\{n+k\}-n.ext}. 
\item\label{1c}
Delete the $2^n-1$ trivial extreme rays (see Theorem \ref{thm1}) and extract the $2^n-1$ 
coordinates corresponding to the variables of the $S$-vectors. 
Remove redundant rays to obtain
the $V$-representation 
\texttt{H\{n+k\}-n.ext} of $H_{n+k,n}$. This is an inner approximation of $H_n$.
\item\label{1d}
Compute the $H$-representation \texttt{H\{n+k\}-n.ine} of $H_{n+k,n}$ from \texttt{H\{n+k\}-n.ext}.
Using
the ILP method with of Section \ref{ilp} with $N \ge n+1$, 
reject facet orbits that are invalid for $K_N$, continuing until either a facet
is rejected or $N$ is too large for the ILP to solve. 
\item\label{1e}
Test any remaining facet orbits for which the validity is unknown
using the proof-by-contraction method of Section \ref{sec:contract}.
Generate the full orbits of the facets proved valid, getting
a cone \texttt{HV\{n+k\}-n.ine} which is an outer approximation of $H_n$.
\item\label{1f}
Compute the extreme rays \texttt{HV\{n+k\}-n.ext} of \texttt{HV\{n+k\}-n.ine}.
The orbits of $S$-vectors that appeared in
\texttt{P\{n+k\}-n.ext} give extreme rays of $H_n$ by Theorem \ref{thm1}\ref{thm:HP}.
The remaining orbits can be checked by the ILP method of Section \ref{ilp} with $N \ge n+1$ until
finding a realization or $N$ being too large for the ILP to solve.
If all extreme-ray orbits can be realized, then \texttt{HV\{n+k\}-n.ine} is an $H$-representation of $H_n$ 
and \texttt{HV\{n+k\}-n.ext} is its $V$-representation by Proposition \ref{H-V}\ref{prop:HV}.  
Otherwise, increment $k$ and return to step \ref{1b}. 
\vspace{.1cm}\hrule\vspace{.1cm}
\end{enumerate}

Applying Method 1 with $n=5$, one finds that $P_{7,5}$ has $83$ facets and $194$ extreme rays in $52$ 
dimensions. The resulting $H_{7,5}$ has $142$ extreme rays in $31$ dimensions, and its $H$-representation 
consists of $8952$ facets in $30$ orbits. All but $8$ of them are easily eliminated in 
step \ref{1d} and then proved valid in step \ref{1e}. These orbits give $372$ facets which 
define \texttt{HV7-5.ine}. Correspondingly, \texttt{HV7-5.ext} has $2267$ extreme rays falling into $19$ orbits.
All of the extreme rays are realizable for $N \le 11$, so the procedure terminates after a single iteration.
Note that we obtain a minimum realization of each extreme ray either in 
step \ref{1b} or \ref{1f}, wherever it appears first.

The vertex/facet enumeration problems in steps \ref{1b}, \ref{1d} and \ref{1f} utilized the 
code \normaliz\footnote{\url{https://www.normaliz.uni-osnabrueck.de}} $v.3.4.1$
on \mait\footnote{\mait: $2\times$ Xeon E5-2690 (10-core 3.0GHz), 20 cores, 128GB memory.}.
Steps \ref{1b} and \ref{1d} took only a few seconds, and step \ref{1f} took $23$ minutes.
Step \ref{1e} was run on a laptop\footnote{Dell XPS 15 7590, i7-9750H CPU @ 2.60GHz, 
6 cores, 12 threads, 32GB memory.} using a Mathematica $v.12.1$ 
implementation\footnote{Available upon request.} of the proof-by-contraction method.
Most runs were very fast, taking less than $4$ seconds, and all finished in no more than $16$ minutes.
The ILP runs in step \ref{1f} were performed with 
\cplex\footnote{\url{https://www.ibm.com/analytics/cplex-optimizer}} $v.12.6.3$, 
also on \mait, and normally completed in under $1$ minute, the longest run taking $18$ minutes. 
The filtration by symmetry generally takes just a few seconds.

Applying Method 1 with $n=6$ we run into computational issues as the vertex/facet enumeration
problems quickly become too big to solve with current software and hardware.
Nevertheless, we are still able to get useful results using only partial computations.
Starting with $k=2$,
$P_{8,6}$ has $154$ facets and $194$ extreme rays in $91$
dimensions. The resulting $V$-representation of
$H_{8,6}$ has $4361$ extreme rays in $63$ dimensions falling into
$21$ orbits. All $21$ orbits can be shown to define orbits of extreme rays of $H_6$ by testing their rank against lifts of $n=5$ inequalities to $n=6$.
Obtaining the $H$-representation of 
$H_{8,6}$ is computationally intractable with presently available
algorithms and hardware.
To get more extreme ray orbits we set
$k=3$ and constructed the $H$-representation of
$P_{9,6}$, which has 288 facets in 99 dimensions. It was not possible to
do a complete computation of it V-representation. 
The code \normaliz~ ran out of memory after about a 
day of computation, as did other double-description based methods. However we were able to
get partial results using the parallel reverse search based method \mplrs~contained
in \lrslib
\footnote{\url{http://cgm.cs.mcgill.ca/~avis/C/lrs.html}} $v.7.2$
which gives output in a stream. After about 6 months of computation with \mplrs~
using between $100$ and $200$
processors we obtained $213,225$ extreme rays 
which fall into $1066$ orbits. By construction, all of these extreme rays
are realizable in $K_9$. Together, these orbits generate about 3 million
extreme rays, however $460$ orbits become redundant when the full orbits
are considered. The $606$ non-redundant
orbits generate about 1.5 million extreme rays and $402$ of the orbits are
provably orbits of extreme rays of $H_6$ using lifted inequalities from $n=5$. 
Unfortunately it is not possible to compute the facets of such a large cone
with current methods.
It is important to note that \mplrs~ supports checkpoint/restart and continuing the computation
will continue the output stream until a complete $V$-representation is obtained.
Being based on reverse search,
computer memory is not a constraining factor.

\subsection{Method 2}

The second method is more sophisticated and involves working
with both outer and inner approximations of $H_n$, refining them until they are equal.
The outer approximation is initialized by choosing any set of valid inequalities for $H_n$,
not necessarily facets,
whose intersection is full dimensional.
The inner approximation is initialized by choosing any feasible set of
rays, not necessarily extreme, whose convex hull is also full dimensional. A strong way to initialize 
the outer approximation is to
zero-lift the superbalanced facets of $H_{n-1}$ in all possible ways, add to them singleton
SA, and generate their full orbits under $Sym_{n+1}$. By Theorem \ref{thm:facet} and
Proposition \ref{prop:sal}, these are all facets of $H_n$ and define a cone \texttt{OH1-n.ine}.
For a strong inner approximation, we
zero-lift the extreme rays of $H_{n-1}$ and generate their full orbits under $Sym_{n+1}$, 
which are all extremal in $H_n$ by Proposition \ref{hlift}.
Since this is not always full-dimensional, we
add the full orbits of the $S$-vectors from Proposition \ref{prop:full} not already included, 
and remove redundancies.
In general, this may only add the single orbit of size $n+1$ generated by $S^{[n]}$.
The resulting cone \texttt{IH1-n.ext} is an inner approximation of $H_n$. Set 
the iteration counter $k=1$.

\vspace{.25cm}
\tool{\textbf{Method $\bf 2$}}
\label{mm2}
\vspace{-30pt}
\begin{table}[H]
\setlength{\tabcolsep}{.015\textwidth}\renewcommand{\arraystretch}{1.15}
    \normalsize
    \begin{tabular}{ m{.013\textwidth} m{.42\textwidth} m{.42\textwidth} c}
        &{\center Outer\HRule[5pt]} &  {\center Inner\HRule[5pt]}& \\
        &Compute the $V$-representation \texttt{OHk-n.ext} of \texttt{OHk-n.ine}. 
        Check one extreme ray from each orbit to see if it is realizable by the ILP method of Section \ref{ilp}.
        If all rays are realizable, then \textbf{exit}.
        The full orbits of the realizable extreme rays define \texttt{OHVk-n.ext}. &
        Compute the $H$-representation \texttt{IHk-n.ine} of \texttt{IHk-n.ext}. Apply to it
        steps \ref{1d} and \ref{1e} of Method \hyperref[mm1]{1}, retaining inequalities proved
        by the contraction method of Section \ref{sec:contract}.
        If all inequalities are valid, then \textbf{exit}.
        The full orbits of the valid facets define \texttt{IHVk-n.ine}.&\\
        &$\hfill\downarrow\hfill$ & $\hfill\downarrow\hfill$ &\\
        &Merge \texttt{IHVk-n.ine} (and any other known valid inequalities) 
        with \texttt{OHk-n.ine} and remove redundancies to get \texttt{OH\{k+1\}-n.ine}. & 
        Merge \texttt{OHVk-n.ext} (and any other known realizable rays)
        with \texttt{IHk-n.ext} and remove
        redundancies to get \texttt{IH\{k+1\}-n.ext}. &\\
        &$\hfill\downarrow\hfill$ & $\hfill\downarrow\hfill$ &\\
        \multicolumn{4}{>{\centering\arraybackslash}p{.97\textwidth}}{Increment $k$ and return to 
        the first step of each respective subroutine.\HRule[6pt]}
    \end{tabular}
\end{table}
\vspace{-.3cm}

Note that the inner and outer procedures can be run
in parallel. After they both finish the first step, the newly computed data are exchanged, 
improving both the outer an inner approximations. If \textbf{exit} occurs, the corresponding
\texttt{ine} and \texttt{ext} descriptions give $H$- and $V$-representations of $H_n$, respectively.
In each subroutine, the second step allows for the incorporation of valid inequalities and/or rays 
obtained by other means, such as Method \hyperref[mm1]{1}.

Applying Method 2 with $n=5$, the starting cones \texttt{OH1-5.ine} and \texttt{IH1-5.ext} respectively consist of $80$ 
facets in $3$ orbits (that of singleton SA and $2$ of MMI, cf. Appendix \ref{app:facets}), 
and $66$ extreme rays in $4$ orbits (cf. Figure \ref{fig:eg} in Appendix \ref{app:rays}, and the $J=[n]$ star orbit).

We start with $k=1$ and describe steps in parallel.
In the outer run, \texttt{OH1-5.ext} has $3205$ extreme rays in $29$ orbits, out of which $16$ can 
be shown to be realizable with $N\le11$. Their orbits yield $1457$ feasible rays defining \texttt{OHV1-5.ext}.
In the inner run, \texttt{IH1-5.ine} has $157153$ facets in $346$ orbits, 
out of which one can show $8$ are valid and easily reject the rest.
Their orbits yield $372$ valid inequalities defining \texttt{IHV1-5.ine}.
In the second step it turns out that the outputs of the first step dominate in both cases.
So after the merges,
\texttt{OH2-5.ine} equals \texttt{IHV1-5.ine} and
\texttt{IH2-5.ext} equals \texttt{OHV1-5.ext}.

Setting $k=2$, in the outer run \texttt{OH2-5.ext} has $2267$ extreme rays in $19$ orbits, 
all of which are realizable with $N\le11$.
Hence \textbf{exit} is triggered, and the algorithm terminates returning \texttt{OH2-5} as the result for $H_5$.
If we continue the inner run we find that \texttt{IH2-5.ine} has $1182$ 
facets in $11$ orbits, out of which one can show $8$ 
are valid and easily reject the rest. These are the same $8$ orbits as before and so the algorithm exits
in the first outer step with $k=3$.

Conversions between cone representations again require vertex/facet enumeration. Those in the first 
iteration are immediate. Using \normaliz~on \mait, the computations of \texttt{IH2-5.ine} and \texttt{OH3-5.ext}
took about $75$ seconds and $25$ minutes, respectively. The cost of other computations was similar 
to that of their counterparts in Method \hyperref[mm1]{1}.

Applying Method 2 with $n=6$, the starting cones \texttt{OH1-6.ine} and \texttt{IH1-6.ext}
are in $63$ dimensions and respectively consist of $6503$
facets in $11$ orbits
and $15617$ extreme rays in $20$ orbits.
We start with $k=1$. Again we have to be satisfied with partial computation using \mplrs~
as the problem is too large for current computational methods to terminate in reasonable time.
In the outer run after about 10 days of computation we obtained
$3445$ extreme rays from \texttt{OH1-6.ext} belonging to 3288 distinct orbits. 
None of these 3288 orbits can be ruled out using lifted inequalities (by construction, because they come from \texttt{OH1-6.ine}), which means a priori we have literally 3288 candidates. 
Using a set of heuristically generated inequalities that we were able to prove valid via the contraction method, we reduce this list down to 55 candidates. Of these, $5$ orbits are easily seen to correspond to lifts of $n=5$ extreme rays.
Using \cplex~and the ILP method we found minimum realizations of all $55$ orbits:
$1$ in $K_7$ (lift of a Bell pair), $6$ in $K_{8}$ ($4$ are lifts of star graphs), $10$ 
in $K_{9}$, $13$ in $K_{10}$, $16$ in $K_{11}$ and
$9$ in $K_{12}$. 
All of these extreme rays are thus extreme rays of $H_6$ by construction.
Note that only those realizable in $K_n$ for $n \le 9$ could have appeared in the Method 1 run described.
As noted for Method 1, the computation of \texttt{OH1-6.ext} can be continued with additional
new orbits being produced as a stream until the computation is completed.
In the inner run, the computation of the $H$-representation of \texttt{IH1-6.ext}
was too big to
produce any useful output in two weeks of computation with \mplrs~using $160$ processors.

\subsection{Comparison of Method 1 and Method 2}
Although the inner steps of Method \hyperref[mm2]{2} may appear similar to Method \hyperref[mm1]{1},
they are in fact quite distinct. In the latter, the starting cone \texttt{H\{n+2\}-n.ext} only 
contains $S$-vectors realizable in $K_{n+2}$. Many of these will be non-extremal in $H_n$ and therefore absent 
from \texttt{IH1-n.ext}. Among those which are extremal, some may not be obtainable by zero-lift and thus 
not included in \texttt{IH1-n.ext} either. On the other hand, \texttt{IH1-n.ext} contains all extreme rays of 
$H_n$ coming from zero-lifts. These will generally include plenty which are not realizable in $K_{n+2}$ 
and hence not be contained in \texttt{H\{n+2\}-n.ext}. For example, for $n=6$, \texttt{IH1-6.ext} includes 
zero-lifts of extreme rays in the $5$ orbits of $H_5$ which are realizable 
in $K_N$ with $N \ge 9$ (see Table \ref{tab:rays} in Appendix \ref{app:rays}), 
none of which can possibly be in \texttt{H8-6.ext}.

Both methods may run into fundamental and/or practical issues.
For $n=5$, one is fortunate that the contraction method successfully proves valid 
the $8$ facet orbits of $H_5$. However, it remains a logical possibility that for larger $n$
this proof method is not a necessary condition for validity of some facets of $H_n$
(cf. Problem \ref{prob:cont} at the end of Section \ref{sec:contract}).
Specifically in Method \hyperref[mm1]{1}, it so happens that all
rays in \texttt{HV7-5.ext} are realizable using the ILP of Section \ref{ilp}.
For larger $n$, in practice it could be that even if all rays at step \ref{1f}
were realizable, the value of $N$ required could be too high for the ILP to be solved. 
Without good bounds on $m_{ext}(n)$, this possibility cannot be easily eliminated. 
Alternatively, it could be that some rays are indeed not realizable, meaning that the facet 
description in \texttt{HV\{n+k\}-n.ine} is incomplete. This would require incrementing $k$ 
and at least one further iteration.
As for Method \hyperref[mm2]{2}, we unfortunately have no proof of convergence using the strong
starting inputs suggested without the option to generate and add additional
valid inequalities and/or feasible rays in the second step.
There are various heuristic methods available to generate such additional inputs.
Another complication that affects these methods is the need to solve large convex hull/facet 
enumeration problems. All of these issues arise in one form or another in both methods 
already in the study of $H_6$.

The successful termination of either method relies on the finding of an $H$/$V$-representation 
of an inner/outer approximation of $H_n$ containing all of its facets/extreme rays. For instance, observe 
that in Method \hyperref[mm1]{1} all facets of $H_5$ were already discovered in step \ref{1b} and computed
explicitly in step \ref{1d} (along with other non-valid inequalities) from $H_{N,5}$ for just $N=7$. 
Similarly, Method \hyperref[mm2]{2} converged more easily through an inner approximation
\texttt{IH1-5} whose $H$-representation also contained all facets of $H_5$.
That $H_n$ is easier to obtain from an $H$-representation of an inner approximation is no accident.
This is because smaller $N$ for $K_N$ is needed to 
span all facets than to realize all extreme rays of $H_n$. This motivates the definition of $m_{ine}(n)$ 
as the smallest integer such that the $H$-representation of $H_{m_{ine}(n),n}$ contains all facets of $H_n$.

It is easily seen that $m_{ine}(n)=m_{ext}(n)$ for $1\le n\le 4$ and that $m_{ine}(n)\le m_{ext}(n)$ for 
larger $n$. For $n=5$, the cone $H_{6,5}$ turns out to miss some facets of $H_5$, but $H_{7,5}$ does contain
them all as we have seen in Method \hyperref[mm1]{1}. This shows that $m_{ine}(5)=7$, contrasting with the 
extreme rays, which have $m_{ext}(5)=11$. More generally, when Method \hyperref[mm1]{1} terminates, we have
$m_{ine}(n)=n+k$ and a minimum realization of each extreme ray, from which one also obtains $m_{ext}(n)$.
This makes the importance of $m_{ine}(n)$ manifest and motivates the following problem:
\begin{problem}
\label{prob:boundmine}
    Find tighter bounds on $m_{ine}(n)$. In particular, does $\log_2 m_{ine}(n)$ admit an upper bound that 
    is polynomial in $n$?
\end{problem}

\section{Conclusion}
Many of the important questions about the HEC remain open.
As stated formally throughout the paper in Problems \ref{prob:boundm} through \ref{prob:boundmine}, these include obtaining
an explicit description of either the $H$- or $V$-representation of $H_n$, and finding
the complexity of testing feasibility of rays and validity of inequalities. The current
bounds on the size of the complete graph that can realize all extreme rays of $H_n$
seem far from being tight, at least according to the limited experimental results
that we have. Similarly, our findings suggest that much smaller graphs may be sufficient to span
all facets of $H_n$, which strongly motivates understanding better the relative
complexity of the $H$- and $V$-representations of the HEC.
Ultimately, one would hope to obtain a more fundamental understanding of the HEC, such as in the form of the structural conjectures put forward recently in \cite{Czech:2021rxe,Fadel:2021urx,Hernandez-Cuenca:2022pst,Czech:2022fzb}.
In this work, we have laid the foundations for further exploration of these key questions and provided some useful tools for testing and proving such ideas.
Additionally, we have provided sharp computational tools
which allowed us to completely describe $H_5$ after just a few hours of computation
and produce significant new results for $H_6$.

\section*{Acknowledgments}
We thank
Patrick Hayden,
Temple He,
Veronika Hubeny,
Max Rota,
Bogdan Stoica
and
Michael Walter
for useful discussions.
We would also like to thank an anonymous referee for many comments and suggestions for
improving the paper.

\bibliographystyle{spbasic}
\bibliography{mp}
\appendix

\section{Origins and importance of the HEC in physics}
\label{physics}
The tools of convex geometry have long been applied to systematically study entropy
inequalities, from those obeyed by the Shannon entropy of random variables in
classical probability distributions,
to the ones that the von Neumann entropy of marginals of density
matrices of quantum states satisfy \cite{pippenger2003inequalities}.
As a measure of quantum entanglement, the study of the latter has
proven to be of paramount importance to the development of the field of quantum
information theory and, more generally, to the understanding of correlations in
quantum physics \cite{nielsen2002quantum}.

Although the finding of universal inequalities obeyed by general quantum states
has been elusive, significant progress has been made by the restriction of the
domain of the entropy function to specific subclasses of quantum states of
special relevance for which additional tools are at hand.
In the context of quantum gravity and holography,
one very important such class of
quantum states are those which admit a semi-classical description in terms of a
theory of gravity on a higher-dimensional spacetime.
More specifically, in such cases, the Anti-de Sitter/Conformal Field Theory (AdS/CFT) 
correspondence asserts that a holographic state of the CFT, defined on a \textit{boundary} 
spacetime $M$, has a gravitational \textit{bulk} dual on a spacetime $\mathcal{M}$ with $M$ 
as its conformal boundary, $\partial\mathcal{M}=M$. In the bulk, quantum entanglement of the 
CFT state acquires a  geometric character which has been understood to play a fundamental 
role in the very emergence of spacetime itself \cite{VanRaamsdonk:2010pw}. These findings 
rely on the much celebrated Ryu-Takayanagi (RT) prescription \cite{Ryu:2006bv,Rangamani:2016dms}, 
according to which the von Neumann entropy $S(R)$ of a spatial boundary region 
$R\subset M$ is given holographically by
\begin{equation}
\label{eq:RT}
    S(R) = \min_{\mathcal{R}\subset \mathcal{M}} \; \frac{\area(\mathcal{R})}{4 G \hbar},
\end{equation}
where $G$ is Newton constant, $\hbar$ is Planck constant, and
the minimization is over bulk hypersurfaces $\mathcal{R}$ in a time slice
homologous to $R$ relative to $\partial R$, i.e., subject 
to the condition $\partial \mathcal{R} = \partial R$.
This geometric character that the von Neumann entropy acquires in the bulk turns out to place 
strong constraints on the allowed entanglement structures of holographic states. In a remarkable paper, 
Bao et al. \cite{Bao:2015bfa} initiated a systematic exploration of these constraints with the 
objective of formalizing a set of conditions on quantum states to posses holographic duals.
These were formulated as entropy inequalities satisfied by the RT formula,
defining the facets of a polyhedral cone which was coined as the HEC.

More precisely, the HEC is a family of polyhedral 
cones $H_n$ labelled by an integer $n\ge 1$, all related by projections from 
larger to smaller $n$. 
Their work laid the ground for the finding of new results about the HEC
\cite{Cuenca:2019uzx,Czech:2019lps,He:2020xuo}, and also lead to further
generalizations and explorations of their methods
\cite{Hubeny:2018trv,Hubeny:2018ijt,Hernandez-Cuenca:2019jpv,Bao:2020zgx,Walter:2020zvt,Bao:2020mqq}.
Most of these developments relied on two remarkable results of \cite{Bao:2015bfa}: 
a proof of equivalence between holographic entropies obtained by the RT formula
and minimum cuts on weighted
graphs\footnote{Intuitively, the graph provides a discrete
tessellation of the manifold which encodes sufficient information about its
metric in the form of edge weights, with minimal surfaces and their areas
becoming minimum cuts and their weights, respectively -- see \cite{Bao:2015bfa}.},
and the invention of a combinatorial method to prove the validity of holographic entropy 
inequalities that we will review in Section \ref{facets}.
Crucially, their graph models allow for a complete study of the HEC from a purely combinatorial viewpoint
without reference to the geometric RT formula or quantum physics.

\section{Extremal structure of \texorpdfstring{$H_n$}{Hn} for 
\texorpdfstring{$1 \le n \le 5$}{1<=n<=5}}
\label{good}

Here we summarize the extremal structure of $H_n$ for all $1 \le n \le 5$ by showing
representatives of every orbit of both facets and extreme rays. Representatives of each orbit are 
picked as their lexicographical minimum.\footnote{The only exception to this is inequality
\ref{eq:cyclic} in Table \ref{tab:h5}, which is chosen for symmetry reasons.}
For extreme rays, we also present their minimum realizations,
exhibiting graphs where only edges of nonzero weight are shown.
At every $n$, we only include elements which are genuinely new and not coming from zero-liftings.
This is because these should always be included -- by Proposition \ref{hlift} the zero-lift of rays 
preserves all extreme rays, while by Theorem \ref{thm:facet} the zero-lift of inequalities preserves 
all superbalanced facets. As for SA, Proposition \ref{prop:sal} guarantees that precisely only instances 
involving just singletons in $\extn$ give rise to facets. It will thus be convenient to present results in
increasing order of $n$.

\subsection{Facets}
\label{app:facets}

At $n=1$ one just has one single-element orbit of a nonnegativity facet,
\begin{equation}
    S_1 \ge 0.
\end{equation}
For $n=2$, the cone is a simplex with $3$ facets in a single orbit of SA,
\begin{equation}
    S_1 + S_2 \ge S_{12}.
\end{equation}
Lifting to $n=3$, one gets $6$ facets in the orbit of the trivial zero-lift of SA. The cone 
becomes again a simplex due to the appearance of the new, totally symmetric facet of MMI
\begin{equation}\label{eq:mmia}
    S_{12} + S_{13} + S_{23} \ge S_{1} + S_{2} + S_{3} + S_{123}.
\end{equation}
There are no genuinely new inequalities for $n=4$. The trivial zero-lift of the SA facet gives 
a length-$10$ orbit. Every zero-lift of \eqref{eq:mmia} in fact lands on the same MMI orbit, which 
consists of another $10$ facets. In total, $H_4$ thus has $20$ facets and is not simplicial anymore.

It is at $n=5$ that $H_n$ begins to exhibit a richer structure. The trivial zero-lift of SA now 
contributes an orbit of $15$ facets. The trivial zero-lift of MMI gives an orbit with $20$ facets.
There is now another inequivalent zero-lift of MMI which gives an orbit of length $45$. Besides these, 
there are $5$ orbits of genuinely new facets, given in Table \ref{tab:h5}.

In order, these give rise to orbits of lengths $72$, $90$, $10$, $60$ and $60$. 
Together with the $80$ facets coming from SA and MMI, there are a total of $372$ 
inequalities in the $H$-representation of $H_5$. Other than inequality \ref{eq:cyclic}, 
usually referred to as \textit{cyclic} due to its symmetry under $i\to i+1 \mod n$
which is manifest in the given representative, these inequalities are poorly understood.

\begin{table}[th!]
    \centering
    \hrule\vspace{.2cm}
    \begin{enumerate}[leftmargin=1.1cm,rightmargin=.6cm]
    \begin{minipage}[t]{\linewidth}   
        \item \label{eq:cyclic} $S_{123}+S_{234}+S_{345}+S_{145}+S_{125} 
        \ge S_{12}+S_{23}+S_{34}+S_{45}+S_{15}+S_{12345}$
    \end{minipage}~\\\vspace{.35cm}
    \begin{minipage}[t]{0.4\linewidth}   
        \item \label{eq:ineq52} $S_{14}+S_{23}+S_{125}+S_{135}+S_{145}+S_{245}+S_{345}
        \ge S_1+S_2+S_3+S_4+S_{15}+S_{45}+S_{235}+S_{1245}+S_{1345}$
    \end{minipage}\hspace{0.06\linewidth}
    \begin{minipage}[t]{0.56\linewidth}   
        \item \label{eq:ineq53} $S_{123}+S_{124}+S_{125}+S_{134}+S_{135}+S_{145}+S_{235}+S_{245}+S_{345}
        \ge S_{12}+S_{13}+S_{14}+S_{25}+S_{35}+S_{45}+S_{234}+S_{1235}+S_{1245}+S_{1345}$
    \end{minipage}~\\\vspace{.35cm}
    \begin{minipage}[t]{0.4\linewidth}
        \item \label{eq:ineq54} $2 S_{123}+S_{124}+S_{125}+S_{134}+S_{145}+S_{235}+S_{245} 
        \ge S_{12}+S_{13}+S_{14}+S_{23}+S_{25}+S_{45}+S_{1234}+S_{1235}+S_{1245}$
    \end{minipage}\hspace{0.06\linewidth}
    \begin{minipage}[t]{0.56\linewidth}   
        \item \label{eq:ineq55} $3 S_{123}+3 S_{124}+S_{125}+S_{134}+3S_{135}+S_{145}+S_{234}
        +S_{235}+S_{245}+S_{345} \ge 2 S_{12}+2 S_{13}+S_{14}+S_{15}+S_{23}+2 S_{24}+2 S_{35}
        +S_{45}+2 S_{1234}+2 S_{1235}+S_{1245}+S_{1345}$
    \end{minipage}
\end{enumerate}
\vspace{-.05cm}\hrule\vspace{.45cm}
    \caption{Representative inequalities in each of the $5$ new orbits of facets of $H_5$.}
    \label{tab:h5}
    \vskip -20pt
\end{table}

\subsection{Extreme rays and minimum graph realizations}
\label{app:rays}

Extreme rays and their minimum realizations in $K_{N_{min}}$ will be provided. Extreme rays will be 
labelled by a tuple $(n,N_{min}-n,\sigma)$, where $\sigma\ge 1$ is just an integer 
counting orbits at fixed $n$ and $N_{min}$ by listing their representatives lexicographically. 
Notice that $N_{min}-n\ge 1$ counts the number of bulk vertices needed in the 
minimum representation, plus the sink. For clarity, $S$-vector entries $S_I$ will be 
separated by a semicolon whenever the cardinality of $I$ increases.

At $n=1$ there is a single extreme ray with minimum realization the Bell pair in Figure \ref{fig:eg},
\begin{equation}
    S_{(1,1,1)} = (1).
\end{equation}
The $n=2$ cone has just the length-$3$ orbit of zero-lifts of the Bell-pair extreme ray $(1,1,1)$. 
For $n=3$, the Bell-pair zero-lift now gives an orbit of $6$ extreme rays. A new totally symmetric 
extreme ray appears. It has a star-graph minimum realization shown in Figure \ref{fig:eg} and reads
\begin{equation}
    S_{(3,2,1)} = (1,1,1;\;2,2,2;\;1).
\end{equation}
Lifting to $n=4$ we get orbits of $10$ extreme rays from $(1,1,1)$ and another $5$ from $(3,2,1)$.
A genuinely new length-$5$ orbit of extreme rays appears,
\begin{equation}
    S_{(4,2,1)} = (1,1,1,1;\;2,2,2,2,2,2;\;3,3,3,3;\;2),
\end{equation}
which again has a star graph as minimum realization, as shown in Figure \ref{fig:eg}.

\renewcommand{\thesubfigure}{}
\begin{figure}[H]
    \hfill
	\begin{subfigure}{.19\textwidth}
		\centering\includegraphics[width=\textwidth]{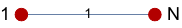}\caption{$(1,1,1)$}
	\end{subfigure}
	\hfill
	\begin{subfigure}{.16\textwidth}
		\centering\includegraphics[width=\textwidth]{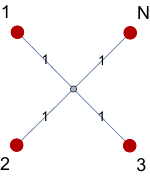}\caption{$(3,2,1)$}
	\end{subfigure}
	\hfill
	\begin{subfigure}{.19\textwidth}
		\centering\includegraphics[width=\textwidth]{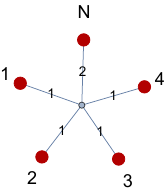}\caption{$(4,2,1)$}
	\end{subfigure}
	\hfill
	\medskip
	\caption[short]{Minimum realizations for extreme rays in each orbit of $H_n$ for $n \le 4$.}
	\vspace{-.0cm}
	\label{fig:eg}
\end{figure}

At $n=5$, lifted extreme rays become a minority. Extreme rays $(1,1,1)$, $(3,2,1)$ and $(4,2,1)$
respectively zero-lift to orbits of lengths $15$, $15$ and $30$, totaling just $60$ extreme rays. 
It turns out $H_5$ has $2267$ in total, so all the others are genuinely new ones. 
They fall into $16$ orbits, which we now present by increasing number of bulk vertices needed in their 
minimum realization.
There are $4$ distinct orbits of extreme rays realizable in a star graph,
\begin{equation}
\label{eq:exstar}
\begin{aligned}
    S_{(5,2,1)} &= (1,1,1,1,1;\;2,2,2,2,2,2,2,2,2,2;\;3,3,3,3,3,3,3,3,3,3;\;2,2,2,2,2;\;1),\\
    S_{(5,2,2)} &= (1,1,1,1,1;\;2,2,2,2,2,2,2,2,2,2;\;3,3,3,3,3,3,3,3,3,3;\;4,4,4,4,4;\;3),\\
    S_{(5,2,3)} &= (1,1,1,1,2;\;2,2,2,3,2,2,3,2,3,3;\;3,3,4,3,4,4,3,4,4,4;\;4,3,3,3,3;\;2),\\
    S_{(5,2,4)} &= (1,1,1,2,2;\;2,2,3,3,2,3,3,3,3,4;\;3,4,4,4,4,5,4,4,5,5;\;5,5,4,4,4;\;3),
\end{aligned}
\end{equation}
with respective orbit lengths $1$, $6$, $15$ and $60$. They can all be represented on the star 
graph shown in Figure \ref{f1}, with appropriate weight assignments as specified in Table \ref{tab:rays}.
There are $6$ orbits which require $2$ bulk vertices,
\begin{equation}
\begin{aligned}
    S_{(5,3,1)} &= (1,1,1,1,1;\;2,2,2,2,2,2,2,2,2,2;\;1,3,3,3,3,3,3,3,3,3;\;2,2,2,2,2;\;1),\\
    S_{(5,3,2)} &= (1,1,1,1,1;\;2,2,2,2,2,2,2,2,2,2;\;2,2,2,3,3,3,3,3,3,3;\;2,2,2,2,2;\;1),\\
    S_{(5,3,3)} &= (1,1,1,1,2;\;2,2,2,3,2,2,3,2,3,3;\;3,3,2,3,4,4,3,4,4,4;\;4,3,3,3,3;\;2),\\
    S_{(5,3,4)} &= (1,1,2,2,2;\;2,3,3,3,3,3,3,4,4,4;\;4,4,4,3,3,5,5,5,5,4;\;4,4,4,3,3;\;2),\\
    S_{(5,3,5)} &= (2,2,2,2,3;\;4,4,4,5,4,4,5,4,5,5;\;4,6,5,6,5,7,6,7,7,7;\;6,5,5,5,5;\;3),\\
    S_{(5,3,6)} &= (3,3,3,3,3;\;6,6,6,6,6,6,6,6,6,6;\;5,7,7,7,7,9,9,9,9,9;\;6,6,6,6,6;\;3),
\end{aligned}
\end{equation}
with respective orbit lengths $10$, $60$, $90$, $180$, $360$ and $90$.
These can be represented on graphs in Figures \ref{f2} to \ref{f5} following Table \ref{tab:rays}.
There are $6$ orbits which require $3$ bulk vertices,
\begin{equation}
\begin{aligned}
    S_{(5,4,1)} &= (1,1,1,1,1;\;2,2,2,2,2,2,2,2,2,2;\;2,2,2,2,3,3,3,3,3,3;\;2,2,2,2,2;\;1),\\
    S_{(5,4,2)} &= (1,1,1,1,1;\;2,2,2,2,2,2,2,2,2,2;\;2,2,3,3,2,3,3,3,3,3;\;2,2,2,2,2;\;1),\\
    S_{(5,4,3)} &= (2,2,2,2,3;\;4,4,4,5,4,4,5,4,5,5;\;4,6,5,6,7,5,6,7,7,7;\;6,5,5,5,5;\;3),\\
    S_{(5,4,4)} &= (3,3,3,3,3;\;6,6,6,6,6,6,6,6,6,6;\;5,7,7,7,9,9,9,7,9,9;\;6,6,6,6,6;\;3).
\end{aligned}
\end{equation}
with respective orbit lengths $180$, $60$, $360$ and $360$.
These are realizable in graphs in Figures \ref{f6} to \ref{f8} following Table \ref{tab:rays}.

\noindent Finally, there is an orbit of length $360$ with $4$ bulk vertices,
\begin{equation}
    S_{(5,5,1)} = (3,3,3,3,3;\;6,6,6,6,6,6,6,6,6,6;\;5,7,7,7,9,7,9,9,9,9;\;6,6,6,6,6;\;3),
\end{equation}
and another one of length $15$ with $5$ bulk vertices,
\begin{equation}
    S_{(5,6,1)} = (1,1,1,1,1;\;2,2,2,2,2,2,2,2,2,2;\;2,2,3,3,2,2,3,3,3,3;\;2,2,2,2,2;\;1).
\end{equation}
These are respectively realizable in graphs in Figures \ref{f7} and \ref{f8} following Table \ref{tab:rays}.

In summary, $H_5$ consists of $2267$ extreme rays in $19$ orbits, 
$2207$ of which lie in $16$ orbits new to $n=5$.
Note that apart from the Bell pair $(1,1,1)$ in Figure \ref{fig:eg}, there are no edges
between terminals in any of the minimum extreme-ray representations.
Each is planar except for $(5,6,1)$ in Figure \ref{f8}, 
which can be embedded on a torus. Terminals have degree at most $3$.

\section{Miscellaneous examples}
\label{newS}

\paragraph{Convexity of $H_{N,n}^+$ and $H_{N,n}$:}

For $n=5$, consider extreme rays $S_{(5,2,2)}$ and the zero-lift of $S_{(3,2,1)}$, both of which 
are realizable in $K_7$. Using the ILP method in Section \ref{ilp} we can determine that their sum 
is not. Therefore, neither $H_{7,5}^+$ nor $H_{7,5}$ is convex without the convex operator applied.
The minimum realization of their sum is in $K_8$, and has edges 
$\{(1, 6), (2, 6), (3, 6), (4, 7), (5, 7), (6, 7), (7, 8)\}$ 
with respective weights $\{2,2,2,1,1,4,4\}$.

\paragraph{Extreme rays of $P_{N,n}$ and $H_{N,n}$:}
For $n=5$ in $K_9$, consider these three feasible $(S,w)$ pairs:
\begin{equation*}
\begin{aligned}
    S^1&= (3,4,3,3,3;\;7,6,6,6,7,7,7,6,6,6;\;6,8,6,9,9,7,8,8,8,9;\;5,5,5,6,5;\;2), \\
    S^2&= (1,2,1,1,1;\;3,2,2,2,3,3,3,2,2,2;\;2,2,2,3,3,3,2,2,2,3;\;1,1,1,2,1;\;0), \\
    S^3&= (2,2,2,2,2;\;4,4,4,4,4,4,4,4,4,4;\;4,6,4,6,6,4,6,6,6,6;\;4,4,4,4,4;\;2), \\
    w^1&= (0,0,0,0,2,1,0,0;\;0,0,0,3,1,0,0;\;0,0,2,0,1,0;\;0,1,1,1,0;\;1,2,0,0;\;0,1,0;\;1,0;\;2), \\
    w^2&= (0,0,0,0,1,0,0,0;\;0,0,0,2,0,0,0;\;0,0,1,0,0,0;\;0,1,0,0,0;\;1,0,0,0;\;0,0,0;\;0,0;\;0), \\
    w^3&= (0,0,0,0,1,0,1,0;\;0,0,0,0,0,2,0;\;0,0,0,1,1,0;\;0,1,1,0,0;\;2,0,0,0;\;1,1,0;\;1,2;\;0).
\end{aligned}
\end{equation*}
Here $(S^1,w^1)$ is an extreme ray of $P_{9,5}$. However, its $S$ coordinates have $S^1=S^2+S^3$,
so the latter cannot be an extreme ray of $H_{9,5}$ and hence of $H_5$ either
(cf. Theorem \ref{thm1}\ref{thm:HP}).

\vfill

\begin{table}[bh!]
\setlength{\tabcolsep}{1pt}
\renewcommand{\arraystretch}{.92}
    \small
    \centering
    \begin{tabular}{c | c | cccccc | cccccccccccccccc}
    \toprule
        \multicolumn{1}{c|}{~Extreme Ray~~}
     &  \multicolumn{1}{c|}{~Graph~~}
     &  \multicolumn{6}{c|}{~Terminal Edges~} 
     &  \multicolumn{15}{c}{Edge Weights} \\
        \cmidrule(lr){1-1}
        \cmidrule(lr){2-2}
        \cmidrule(lr){3-8}
        \cmidrule(lr){9-24}     
        $(\,\cdot\,,\,\cdot\,,\,\cdot\,)$ & $\#$ & ~$N$ & $1$ & $2$ & $3$ & $4$ & $5$~ & 
        $w_{N}$ & $w_{1}$ & $w_{2}$ & $w_{3}$ & $w_{4}$ & $w_{5}$ & $w_{6}$ & $w_{7}$ & $w_{8}$ & $w_{9}$ & 
        $w_{10}$ & $w_{11}$ & $w_{12}$ & $w_{13}$ & $w_{14}$ & $w_{15}$ \\
    \midrule\midrule
    $(5,2,1)$ & $1$ & & & & & & & $ 1$ & $ 1$ & $ 1$ &
    $ 1$ & $ 1$ & $ 1$ & & & & & & & & & & \\
    $(5,2,2)$ & $1$ & & & & & & & $ 3$ & $ 1$ & $ 1$ &
    $ 1$ & $ 1$ & $ 1$ & & & & & & & & & & \\
    $(5,2,3)$ & $1$ & & & & & & & $ 2$ & $ 1$ & $ 1$ &
    $ 1$ & $ 1$ & $ 2$ & & & & & & & & & & \\
    $(5,2,4)$ & $1$ & & & & & & & $ 3$ & $ 1$ & $ 1$ &
    $ 1$ & $ 2$ & $ 2$ & & & & & & & & & & \\\midrule
    $(5,3,1)$ & $2$ & & & & & & & $ 1$ & $ 1$ & $ 1$ &
    $ 1$ & $ 1$ & $ 1$ & $ 1$ & & & & & & & & & \\
    $(5,3,2)$ & $3$ & & & & & & & $ 2$ & $ 2$ & $ 2$ &
    $ 1$ & $ 1$ & $ 1$ & $ 1$ & $ 1$ & $ 1$ & $ 1$ & & & & & & \\
    $(5,3,3)$ & $2$ & & & & $ w_5$ & & $ w_3$~ & $ 2$ & $ 1$ & $ 1$ &
    $ 2$ & $ 1$ & $ 1$ & $ 2$ & & & & & & & & & \\
    $(5,3,4)$ & $4$ & & & & & & & $ 2$ & $ 1$ & $ 1$ &
    $ 2$ & $ 1$ & $ 1$ & $ 1$ & $ 1$ & $ 1$ & & & & & & & \\
    $(5,3,5)$ & $3$ & ~$ w_1$ & $ w_N$ & $ w_4$ & & $ w_2$ & & $ 2$ & $ 3$ & $ 2$ &
    $ 1$ & $ 1$ & $ 2$ & $ 1$ & $ 1$ & $ 1$ & $ 1$ & & & & & & \\
    $(5,3,6)$ & $5$ & & & & & & & $ 3$ & $ 3$ & $ 2$ &
    $ 2$ & $ 2$ & $ 2$ & $ 1$ & $ 1$ & $ 1$ & $ 1$ & $ 1$ & & & & & \\\midrule
    $(5,4,1)$ & $6$ & & & & & & & $ 2$ & $ 1$ & $ 2$ &
    $ 1$ & $ 1$ & $ 1$ & $ 1$ & $ 1$ & $ 1$ & $ 1$ & $ 1$ & $ 1$ & & & & \\
    $(5,4,2)$ & $7$ & & & & & & & $ 1$ & $ 2$ & $ 1$ &
    $ 1$ & $ 2$ & $ 2$ & $ 1$ & $ 1$ & $ 1$ & $ 1$ & $ 1$ & $ 1$ & & & & \\
    $(5,4,3)$ & $7$ & ~$ w_1$ & $ w_N$ & $ w_3$ & $ w_4$ & $ w_2$ & & $ 1$ & $ 3$ & $ 1$ &
    $ 1$ & $ 2$ & $ 3$ & $ 2$ & $ 1$ & $ 1$ & $ 1$ & $ 1$ & $ 1$ & & & & \\
    $(5,4,4)$ & $8$ & & & & & & & $ 3$ & $ 2$ & $ 2$ &
    $ 1$ & $ 2$ & $ 2$ & $ 1$ & $ 1$ & $ 1$ & $ 1$ & $ 1$ & $ 1$ & $ 1$ & $ 1$ & & \\\midrule
    $(5,5,1)$ & $9$ & & & & & & & $ 3$ & $ 3$ & $ 3$ &
    $ 1$ & $ 1$ & $ 3$ & $ 2$ & $ 2$ & $ 1$ & $ 1$ & $ 1$ & $ 1$ & $ 1$ & $ 1$ & $ 1$ & \\\midrule
    $(5,6,1)$ & $10$ & & & & & & & $ 1$ & $ 1$ & $ 1$
    & $ 1$ & $ 1$ & $ 1$ & $ 1$ & $ 1$ & $ 1$ & $ 1$ & $ 1$ & $ 1$ & $ 1$ & $ 1$ & $ 1$ & $ 1$ \\
    \bottomrule
    \end{tabular}
    \caption{Extreme ray vs graph cross reference table for Figure \ref{extremegraphs}. 
    The columns under ``Terminal Edges'' specify which edge $w_j$ is incident to each 
    extended terminal $i\in[5;N]$, with a blank entry indicating $j = i$. Edge weights give all 
    extreme rays listed throughout Appendix \ref{app:rays}, up to overall scaling.}
    \label{tab:rays}
\end{table}

\newpage

\renewcommand{\thesubfigure}{\itshape{(\arabic{subfigure}})}
\begin{figure}[ht!]
	\begin{subfigure}{.18\textwidth}
		\centering\includegraphics[width=\textwidth]{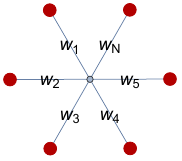}\caption{$N=7$}
		\label{f1}
	\end{subfigure}\hfill
	\begin{subfigure}{.26\textwidth}
		\centering\includegraphics[width=\textwidth]{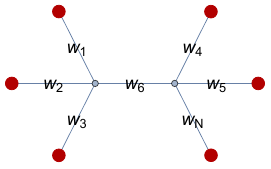}\caption{$N=8$}
		\label{f2}
	\end{subfigure}\hfill
	\begin{subfigure}{.29\textwidth}
		\centering\includegraphics[width=\textwidth]{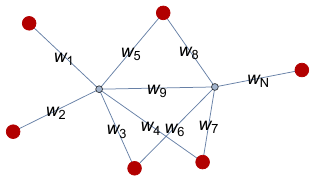}\caption{$N=8$}
		\label{f3}
	\end{subfigure}\hfill
	\begin{subfigure}{.25\textwidth}
		\centering\includegraphics[width=\textwidth]{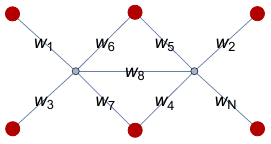}\caption{$N=8$}
		\label{f4}
	\end{subfigure}\\[10pt]
	\begin{subfigure}{.28\textwidth}
		\centering\includegraphics[width=\textwidth]{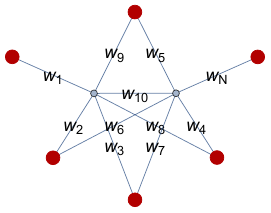}\caption{$N=8$}
		\label{f5}
	\end{subfigure}\hfill
	\begin{subfigure}{.32\textwidth}
		\centering\includegraphics[width=\textwidth]{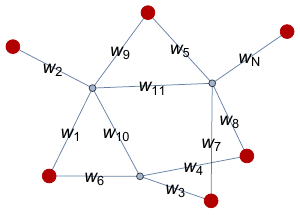}\caption{$N=9$}
		\label{f6}
	\end{subfigure}\hfill
	\begin{subfigure}{.25\textwidth}
		\centering\includegraphics[width=\textwidth]{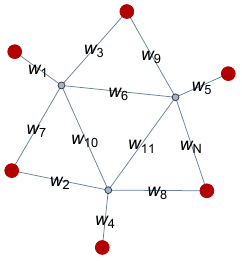}\caption{$N=9$}
		\label{f7}
	\end{subfigure}\\[12pt]
	\begin{subfigure}{.34\textwidth}
		\centering\includegraphics[width=\textwidth]{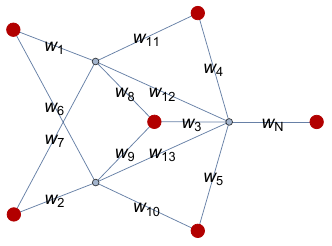}\caption{$N=9$}
		\label{f8}
	\end{subfigure}\hfill
	\begin{subfigure}{.32\textwidth}
		\centering\includegraphics[width=\textwidth]{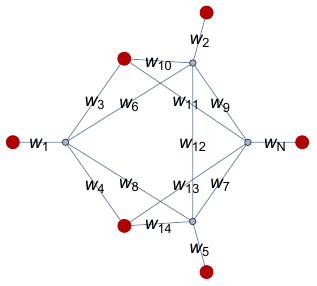}\caption{$N=10$}
		\label{f9}
	\end{subfigure}\hfill
	\begin{subfigure}{.26\textwidth}
		\centering\includegraphics[width=\textwidth]{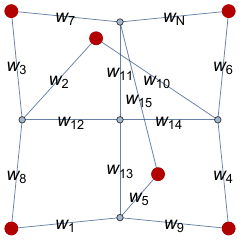}\caption{$N=11$}
		\label{f10}
	\end{subfigure}
	\vspace{14pt}
	\caption[short]{Minimum realizations for extreme rays which are new for $H_5$.}
	\label{extremegraphs}
\end{figure}

\end{document}